\newtheorem{thm}{Theorem}[section]
\newtheorem{lem}[thm]{Lemma}
\newtheorem{cor}[thm]{Corollary}
\newtheorem{prop}[thm]{Proposition}
\newtheorem{defi}[thm]{Definition}
\newcommand{\TT}{\mathbb{T}}
\newcommand{\DD}{\mathbb{D}}
\newcommand{\RR}{\mathbb{R}}
\newcommand{\T}{\mathbb{T}} 
\newcommand{\R}{\mathbb{R}} 
\newcommand{\C}{\mathbb{C}} 
\newcommand{\D}{\mathbb{D}}
\newcommand{\cD}{\mathcal{D}}
\newcommand{\cM}{\mathcal{M}}
\newcommand{\re}{\mathrm{Re}}
\newcommand{\dist}{\mathrm{d}} 
\newcommand{\Dim}{\mathrm{dim}}
\newcommand{\dd}{\mathrm{d}} 
\begin{document}

\title{The local Dirichlet integral and applications}

\author[]{{Omar} {El-Fallah}}
\address{Laboratoire Analyse et Applications. Universit\'e Mohamed V Rabat. Morocco}
\email[O. El-Fallah]{o.elfallah@um5r.ac.ma}

\author[]{{Karim} {Kellay}} 
\address{Univ. Bordeaux, CNRS, Bordeaux INP, IMB, UMR 5251, F-33400 Talence, France}
\email[K. Kellay]{kkellay@math.u-bordeaux.fr}

\author[]{{Houssame} {Mahzouli}}
\address{Laboratoire Analyse et Applications. Universit\'e Mohamed V Rabat. Morocco}
\email[H. Mahzouli]{mahzouli@fsr.ac.ma}
\subjclass{46E22, 31A05, 31A15, 31A20, 47B32}
\keywords{Dirichlet  spaces, Multiplier, Carleson measure}

\begin{abstract}
We study the local Dirichlet integral of distance functions and their behavior within the harmonic Dirichlet space.  We provide estimates for the local Dirichlet integral of distance functions, which allow us to study their membership in the algebra of multipliers of the Dirichlet space.  
We give sufficient condition for a closed subset of the unit circle to be polar and  we also examine cyclicity in the harmonic Dirichlet spaces.
\end{abstract}

\maketitle
\section{Introduction and main results}

In this work, we study the local Dirichlet integral in relation to distance functions. Our main goal is to analyze their membership in the algebra of multipliers of the classical Dirichlet space and their role in the study of cyclicity in the harmonic Dirichlet space.

Let \( f \) be a function in \( L^1(\TT) \), where  \( \mathbb{T} \) denotes the unit circle. We assume that  the nontangential limit of the Poisson transform of $f$, \( f^*(\zeta) \), exists.
The local Dirichlet integral of \( f \) is defined as  
\[
\cD_\zeta(f) := \frac{1}{2\pi} \int_{\mathbb{T}} \frac{|f^*(\xi)-f^*(\zeta)|^2}{|\xi-\zeta|^2} |d\xi|.
\]
Note that  \( \mathcal{D}_\zeta(f) \) may be infinite. If \( f^*(\zeta) \) does not exist, we set $\cD_\zeta (f) = \infty$. The formula was introduced by J.Douglas  in his study of minimal surfaces, known as the Plateau problem, within the classical Dirichlet space. It was later studied by Richter and Sundberg \cite{RS}, who extended it to account for the canonical factorization of \( f \), thereby generalizing a result of L. Carleson \cite{carleson1960representation}. Additionally, they characterized the shift-invariant subspaces of the harmonic Dirichlet space \cite{RStrans}.

To present our results, we first introduce some notation. Let \( \mu \) be a positive Borel measure on \( \mathbb{T} \). The harmonic Dirichlet space \( \mathcal{D}(\mu) \) consists of all functions \( f \) in the Hardy space \( H^2 \) for which the local Dirichlet integral is integrable with respect to \( \mu \). This space is equipped with the norm  
\[
\|f\|^2_{\mu} := \|f\|^2_{H^2} +\cD_\mu(f), 
\]
where 
$$\cD_\mu(f):= \int_\mathbb{T} \cD_\zeta(f) \, d\mu(\zeta).$$
Note that \( \mathcal{D}(\mu) \subset H^2 \), so the radial limit \( f^*(\zeta) = \lim_{r \to 1^-} f(r\zeta) \) exists almost everywhere on \( \mathbb{T} \). For simplicity, we will write $f$ instead of  \( f^* \).
When \( \dd\mu(\zeta) = {\dd m(\zeta)}/{2\pi} \), the normalized arc measure on \( \mathbb{T} \), \( \mathcal{D}(\mu) \) reduces to the classical Dirichlet space \( \mathcal{D} \), consisting of all holomorphic functions \( f \) on the unit disc \( \mathbb{D} \) whose derivative \( f' \) is square-integrable with respect to the area measure. The characterization of invariant subspaces of the Hardy space was given by Beurling \cite{B}; for Dirichlet spaces, however, the situation is different. The spaces \(  \mathcal{D}(\mu) \) were introduced by Richter \cite{Richertarns} in his study of \( 2 \)-isometries and the characterization of shift-invariant subspaces of the Dirichlet space; see also \cite{RichterCreel, RStrans, el2014primer}. {  More precisely, consider the shift-invariant subspace  $\mathcal{M}$ of  Dirichlet space $\mathcal{D}$, 
Richter established the following key properties:
\begin{itemize}
    \item \( \dim (\mathcal{M} \ominus z \mathcal{M}) = 1 \).
    \item \( \mathcal{M} = \varphi \mathcal{D}(\mu_\varphi) \), where \( \varphi \in \mathcal{M} \ominus z \mathcal{M} \),  \( \| \varphi \|_{\mathcal{D}} = 1 \) and $\dd\mu_\varphi=|\varphi^*|^2 \dd m$. 
\end{itemize}
Note that the function $\varphi$ is a multiplier of $\cD$, and that   \( \varphi \) is an extremal function for \( \mathcal{D} \), that is,
\[
\|\varphi\|_{\mathcal{D}} = 1, \qquad 
\langle \varphi, z^n \varphi \rangle_{\mathcal{D}} = 0, \quad n \ge 1.
\]
We refer to the monographs \cite{ARSW1,el2014primer} for numerous results on the Dirichlet space, and to the recent papers by Aleman, Hartz, McCarthy, and Richter, notably concerning Smirnov factorization in the Dirichlet space \cite{AHMR,AHMR1,AHRM2}.

The study of the Dirichlet space involves the study of the spaces $\cD (\mu)$. A particularly important case is \( \mathcal{D}(\delta_\zeta) \), where the norm is given by the \textit{local Dirichlet integral}.  In this context,} We establish new estimates for the local Dirichlet integral of a class of outer functions called distance functions. These functions are characterized  by boundary values \( |f^*(\zeta)| \) that depend only on \( {\dist}(\zeta, E) \), distance from $\zeta$ to $E$. More precisely, given a closed set \( E \subset \mathbb{T} \) of Lebesgue measure zero and a continuous function \( \omega : (0, \pi] \to \mathbb{R}^+ \) satisfying  
\[
\log \omega(\dist(\cdot, E)) \in L^1(\mathbb{T}),
\]
we define the outer function \( f_{\omega,E} \) by  
\begin{equation}\label{outerw}
| f_{\omega,E}^*(\zeta) | = \omega(\dist(\zeta, E)) \quad \text{a.e.}
\end{equation}
Carleson \cite{carleson1952} previously studied such functions and constructed outer functions that are smooth up to the boundary, with prescribed zero sets. Our main result provides an estimate for the local Dirichlet integral of these distance functions, with explicit bounds when \( E \) is an interpolation set for H\"older classes.

This leads us to investigate whether these functions belong to the algebra of multipliers of the Dirichlet space. Unlike in the Hardy or Bergman spaces, where multipliers are simply bounded holomorphic functions, the characterization of multipliers for the Dirichlet space is more intricate. Stegenga \cite{stegenga1980multipliers} characterized Carleson measures for the Dirichlet space in terms of the logarithmic capacity of their boundary values. Another characterization, due to Arcozzi, Rochberg, and Sawyer \cite{ARS1,ARS2,ARSW1}, links the reproducing kernel to the Carleson box. We use these results to identify examples of distance functions that belong to the algebra of multipliers.

Finally, we explore cyclicity in \( \cD(\mu) \). A central problem is to characterize cyclic functions in \( \cD(\mu) \), namely those whose polynomial multiples form a dense subset of \( \cD(\mu) \). Any cyclic function in \( \cD(\mu) \) must be outer and have a boundary zero set of $c_\mu$-capacity zero, where $c_\mu$ is the natural capacity associated with $\cD(\mu)$. The Brown-Shields conjecture \cite{BS} for \( \cD(\mu) \) suggests that this condition is also sufficient. While this conjecture has been extensively studied for the classical Dirichlet space (see, for example, \cite{EKR,EKR2,EKR3,EL,HS} and references therein), it remains open for general measures \( \mu \), even for continuous functions. 
Some progress has nevertheless been achieved: it was shown  in \cite{EEK} that the conjecture holds for measures with countable support, and in \cite{EEL}, new examples were given where it also holds. In this work, we contribute additional examples of measures and sets for which the conjecture is verified.

\subsection{Estimates of the local Dirichlet integral of distance functions}
Our main result provides estimates for the local Dirichlet integral of distance functions  $f_{\omega,E}$.  More precisely, let $E$ be a closed subset of the unit circle $\T=\partial\D$, $E$ is called a Carleson set if
$$
\int_\mathbb{T} \log \dist(\zeta,E)\, |\dd\zeta| > -\infty.
$$
A Carleson set is the zero set of a Hölder continuous analytic function. For $ \alpha > 0 $, we associate with such a set the following {\it distance function}, the outer function given by 
$$|f^*_{\alpha,E}(\zeta)| = \dist(\zeta,E)^\alpha \quad \text{a.e. on } \mathbb{T}.$$

Carleson \cite{carleson1952} was interested in the membership of the distance function in the Dirichlet space. He proved that $ f_{\alpha,E} \in \mathcal{D} $ for $ \alpha \geq 3 $. Let $ A^\infty(\mathbb{D}) = \text{\rm Hol}(\mathbb{D}) \cap \mathcal{C}^\infty(\mathbb{D}) $. Taylor and Williams \cite{TW} have shown that when $ E $ is a Carleson set, there exists an outer function $ f \in A^\infty(\mathbb{D}) $ such that $ f^{(n)}|_E = 0 $. It is clear that $ f \in \mathcal{M(D)} $, the algebra of multipliers of the Dirichlet space.
In \cite{EKR2}, the authors studied the membership of distance functions $ f_{\omega,E} $  in the Dirichlet space. Let  \(\omega\) be an increasing function such that \(\omega(t^\gamma)\) is concave  for some \(\gamma>2\), 
they showed by using Carleson formula \cite{carleson1960representation} that $ f_{\omega,E}\in \cD$ if  
\begin{equation}\label{normeDirichlet}
\int_\T \dist(\zeta,E)\omega'(\dist(\zeta,E))^2|d\zeta|<\infty, 
\end{equation}
In particular, if \( \alpha > \frac{1}{2} \), then \( f_{\alpha,E} \in \mathcal{D} \), and one can construct a Carleson set \( E \) such that \( f_{1/2,E} \notin \mathcal{D} \). In this paper, we investigate the membership of distance functions in the algebra of multipliers. To achieve minimal growth, we need to consider more regular Carleson sets.
Let \( I(\zeta,E) \) denote the connected component of \( \mathbb{T} \setminus E \) that contains \( \zeta \), and
\[
\Gamma (\zeta , E) := \left\{ \zeta' \in \mathbb{T} \setminus I(\zeta, E) : \dist(\zeta',E) \leq \dist(\zeta,E) \right\}.
\]
A closed set \( E \subset \mathbb{T} \) is an \( \mathcal{L}_1 \)-set (respectively an \( \mathcal{L}_2 \)-set) if for any \( \zeta \in \mathbb{T} \setminus E \), the following inequality holds:
\[
\tag{$\mathcal{L}_1$}
\int_{\Gamma (\zeta, E)} \frac{\log\left({\dist(\zeta,E)}/{\dist(\zeta',E)}\right)}{|\zeta - \zeta'|^2} \, |d\zeta'| \lesssim \frac{1}{\dist(\zeta,E)}, \quad \zeta \in \mathbb{T} \setminus E.
\]
\[
\tag{$\mathcal{L}_2$}
\int_{\Gamma (\zeta, E)} \frac{\log^2\left({\dist(\zeta,E)}/{\dist(\zeta',E)}\right)}{|\zeta - \zeta'|^2} \, |d\zeta'| \lesssim \frac{1}{\dist(\zeta,E)}, \quad \zeta \in \mathbb{T} \setminus E.
\]
Interpolation sets for H\"older classes and the generalized Cantor set are examples of such sets. 
These sets satisfy the following $\mathcal{K}$-condition (after Kotochigov).  A closed set \( E \subset \T \) is said to be a \(\mathcal{K}\)-set, if there exists a positive constant \( c_E \) such that for any arc \( I \subset \T \),
\begin{equation}\tag{$\mathcal{K}$}
\sup_{\zeta \in I} \dist(\zeta, E) \geq c_E |I|.
\end{equation}
Note that if $E$ is $\mathcal{K}$-sets, then for some $\beta >0$, $\dd(\cdot,E)^{-\beta}$  satisfies the Muckenhoupt’s condition $A_2$, and consequently  the $\mathcal{L}_1$ condition is fulfilled, \cite{B2}.

We now consider \( E \)  such that \( \log \omega({\dist}(\cdot, E)) \in L^1(\mathbb{T}) \). We associate the outer function \( f_{\omega, E} \) given by  \eqref{outerw}.  We obtain the following result, which provides a precise  estimate of the local Dirichlet integral of the distance function.\\

\begin{thm}\label{estimationlocal} Let \( \omega \) be an increasing function such that \( {\omega(t^\gamma)} \) is concave for some \( \gamma >2 \), and \( {t \omega'(t)}/{\omega(t)} \) is increasing.   Let $E$ be a Carleson set of $\T$. Then
 \begin{enumerate}
\item
Let $\alpha \in (0,1/2)$, then 
\begin{equation}\label{CK}
\cD _\zeta(f_{\alpha,E}) \asymp  \dist^{2\alpha -1}(\zeta, E) \text{   for all $\zeta\in \T$ } \iff E \text{ is an \(   \mathcal{L}_1 \)-set.}
\end{equation}
\item If $E$ is an \(   \mathcal{L}_2 \)-set then  
\begin{equation}\label{eqfomega}
\cD_\zeta(f_{\omega,E})\asymp \dist(\zeta,E)\omega'(\dist(\zeta,E))^2,\qquad \zeta\in \T
\end{equation}
\item If moreover $x\omega'(x) \asymp x^2\omega'(x^2)$, then 
\begin{equation}\label{eqfomega2}
\cD_\zeta(f_{\omega,E})\asymp \dist(\zeta,E)\omega'(\dist(\zeta,E))^2  \text{   for all $\zeta\in \T$ }  \iff E \text{ is an \(   \mathcal{L}_2 \)-set.}\\
\end{equation}
\end{enumerate}
\end{thm}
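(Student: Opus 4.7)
The plan is to work directly with the integral formula
$$\cD_\zeta(f_{\omega,E}) = \frac{1}{2\pi}\int_\T \frac{|f^*_{\omega,E}(\xi)-f^*_{\omega,E}(\zeta)|^2}{|\xi-\zeta|^2}|d\xi|$$
for $\zeta \in \T \setminus E$, and to partition $\T$ into three pieces: the component $I(\zeta,E)$ containing $\zeta$; the set $\Gamma(\zeta,E)$ of points in other components that lie closer to $E$ than $\zeta$; and the complementary set $\Omega_\zeta$ of points in other components that lie farther from $E$. Writing $d = \dist(\zeta,E)$, the objective is to show that the $I(\zeta,E)$-contribution is always $\asymp d\omega'(d)^2$ (producing the unconditional lower bound and the dominant term of the upper bound), that $\Omega_\zeta$ contributes at most $O(d\omega'(d)^2)$, and that the $\Gamma(\zeta,E)$-contribution is controlled \emph{exactly} by the $\mathcal{L}_j$-integrals.

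Since $f=f_{\omega,E}$ is outer, one factors $f^*(\xi) = \omega(\dist(\xi,E))\exp(iH[\log\omega\circ\dist(\cdot,E)](\xi))$, where $H$ is the harmonic conjugate operator, and splits
$$|f^*(\xi)-f^*(\zeta)|^2 \asymp (\omega(\dist(\xi,E))-\omega(d))^2 + \omega(\dist(\xi,E))\omega(d)\,\bigl|H[\log\omega\circ\dist(\cdot,E)](\xi) - H[\log\omega\circ\dist(\cdot,E)](\zeta)\bigr|^2,$$
reducing everything to a \emph{modulus} and an \emph{argument} piece. On $\Gamma(\zeta,E)$ the structural hypotheses on $\omega$ (concavity of $\omega(t^\gamma)$ and monotonicity of $t\omega'(t)/\omega(t)$) yield the pointwise bound
$$\bigl|\omega(d)-\omega(\dist(\xi,E))\bigr| \lesssim d\,\omega'(d)\,\log\!\bigl(d/\dist(\xi,E)\bigr),$$
so squaring and integrating produces precisely the integrand of the $\mathcal{L}_2$-condition. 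The $\Omega_\zeta$-piece is handled by the crude bound $|\xi-\zeta|\gtrsim\dist(\xi,E)\vee d$ and direct estimates on $\omega$. This proves the sufficiency direction in (2); under the additional hypothesis $x\omega'(x)\asymp x^2\omega'(x^2)$ in (3), the harmonic conjugate piece is controlled by the same $\mathcal{L}_2$-integral, giving sufficiency there too.

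For part (1) with $\omega(t)=t^\alpha$ and $\alpha<1/2$, the preceding bound is too crude and must be refined. We exploit the sharper inequality
$$\bigl(d^\alpha-\dist(\xi,E)^\alpha\bigr)^2 \lesssim d^{2\alpha-1}\bigl(d-\dist(\xi,E)\bigr)\log\!\bigl(d/\dist(\xi,E)\bigr),$$
valid for $\dist(\xi,E)\le d$, which together with the geometric bound $|\xi-\zeta|\gtrsim d-\dist(\xi,E)$ on $\Gamma(\zeta,E)$ reduces the relevant integral to the $\mathcal{L}_1$-integral; an analogous refinement treats the argument piece. The converse implications in (1) and (3) are obtained by reversing the estimates: subtracting the unconditional main term $\asymp d\omega'(d)^2$ coming from $I(\zeta,E)$ from the hypothesized $\cD_\zeta(f_{\omega,E}) \asymp d\omega'(d)^2$ forces the $\Gamma(\zeta,E)$-contribution to be $\lesssim d\omega'(d)^2$, and matching pointwise lower bounds on the integrand let us read off the $\mathcal{L}_j$ inequality directly.

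The main obstacle will be the control of the argument (harmonic conjugate) piece, and in particular keeping track of the precise power of the logarithm so as to distinguish $\mathcal{L}_1$ from $\mathcal{L}_2$; this is where the hypotheses on $\omega$ — especially $x\omega'(x)\asymp x^2\omega'(x^2)$ in (3) — must be used carefully to transfer estimates across dyadic scales. A secondary technical point is verifying that on $\Omega_\zeta$, where $\dist(\xi,E)$ may become very large, the contribution still does not exceed the main term; this is expected from the regularity of $\omega$ and the fact that $|\xi-\zeta|$ grows proportionally, but requires separating the regime $\dist(\xi,E) \gg d$ from the intermediate regime and using the $\mathcal{K}$-type geometry of the connected components of $\T\setminus E$.
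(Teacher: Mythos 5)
Your overall geometry (the three-way split of $\T$ into $I(\zeta,E)$, $\Gamma(\zeta,E)$ and the far region, with $\Gamma(\zeta,E)$ carrying the $\mathcal{L}_j$-integrals) matches the paper's, but your starting point does not, and this is where the proposal has a genuine gap. You work from the Douglas formula and split $|f^*(\xi)-f^*(\zeta)|^2$ into a modulus piece and a harmonic-conjugate (argument) piece, and you then estimate only the modulus piece in detail, deferring the argument piece as ``the main obstacle.'' That piece is not a technicality: it is exactly where the logarithms in the theorem live. Concretely, for $\omega(t)=t^\alpha$ and $\zeta'\in\Gamma(\zeta,E)$ with $\delta'=\dist(\zeta',E)\ll\delta=\dist(\zeta,E)$, the modulus piece satisfies $(\delta^\alpha-\delta'^\alpha)^2\asymp\delta^{2\alpha}$ with \emph{no} logarithm, whereas the true local integrand is $\asymp\delta^{2\alpha}\log(\delta/\delta')$; the single $\log$ that makes the $\mathcal{L}_1$-condition both sufficient \emph{and necessary} in part (1) is carried entirely by the cross/argument term. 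So your plan to ``read off the $\mathcal{L}_1$ inequality'' in the converse direction from pointwise lower bounds fails if only the modulus piece is bounded below, and the upper bound in the sufficiency direction is simply incomplete without a quantitative bound on $\bigl|H[\log\omega\circ\dist(\cdot,E)](\xi)-H[\log\omega\circ\dist(\cdot,E)](\zeta)\bigr|$, which you never supply and which is genuinely delicate since the conjugate function is nonlocal.

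The paper avoids this entirely by invoking the Richter--Sundberg formula for outer functions,
\[
\mathcal{D}_{\zeta}(f) = \frac{1}{2\pi} \int_\mathbb{T} \frac{|f(\zeta')|^2 - |f(\zeta)|^2 - 2|f(\zeta)|^2 \log\bigl(|f(\zeta')|/|f(\zeta)|\bigr)}{|\zeta - \zeta'|^2}\, |\dd\zeta'|,
\]
which expresses $\cD_\zeta(f_{\omega,E})$ purely through the boundary modulus $\omega(\dist(\cdot,E))$; the integrand is then rewritten as the exact double integral $4\int_{\delta'}^{\delta}\frac{\omega'(t)}{\omega(t)}\int_t^{\delta}\omega'(s)\omega(s)\,\dd s\,\dd t$, from which both the $\log$ (for $f_{\alpha,E}$) and the $\log^2$ (for general $\omega$) upper and lower bounds on $\Gamma(\zeta,E)$ drop out, together with the $O(\delta\,\omega'(\delta)^2)$ bounds on $I(\zeta,E)$ and $\Sigma(\zeta,E)$. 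If you want to keep your route, you must either prove the needed two-sided estimates on the conjugate-function increments (hard, and essentially equivalent to reproving Richter--Sundberg in this setting) or replace your starting identity with that formula. Two smaller points: your asserted equivalence $|f^*(\xi)-f^*(\zeta)|^2\asymp(\text{modulus})^2+\omega\omega|\Delta H|^2$ fails when the argument increment is not small modulo $2\pi$; and your treatment of the far region invokes ``$\mathcal{K}$-type geometry,'' which is not among the hypotheses --- the paper handles $\Sigma(\zeta,E)$ using only $|\zeta-\zeta'|\gtrsim\dist(\zeta',E)$ and the concavity of $\omega(t^\gamma)$.
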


 To prove this result, we use the Richter-Sundberg formula \ref{RSfrormula} for outer functions in the Dirichlet space. Conditions $  \mathcal{L}$ are necessary to obtain this result. {  We may observe that the norm formula \eqref{normeDirichlet} can be recovered under the assumption that the additional set belongs to the class \( \mathcal{L} \). However, by considering the double integral and applying our estimates, we can in fact retrieve formula  \eqref{normeDirichlet} for any Carleson set, without requiring it to be in the class \( \mathcal{L} \).}

\subsection{Multipliers in the Dirichlet space}

An analytic function $ f $ defined on $ \mathbb{D} $ is a multiplier for $ \mathcal{D} $ if $ f \mathcal{D} \subset \mathcal{D} $. 
In other words, a function $ f \in \mathcal{D} $ is a multiplier of $ \mathcal{D} $ if 
$$\Vert fg\Vert_\mathcal{D} \lesssim \Vert g\Vert_\mathcal{D},\qquad g \in \mathcal{D}. $$
The algebra $ \mathcal{M}(\mathcal{D}) $ consisting of the multipliers of $ \mathcal{D} $ is not easy to describe.
 A direct reformulation in terms of Carleson measures for $ \mathcal{D} $ (that is, measures $\mu$  for which the embedding $\mathcal{D} \hookrightarrow L^2(\mathbb{D}, \mu)$ is continuous)  shows that $f$ is a multiplier for $\mathcal{D}$ if and only if $f$ is bounded and the measure
$d\mu(z) = |f’(z)|^2 \, dA(z) $ is a Carleson measure for $ \mathcal{D}$.
 
 A positive Borel measure $ \mu $ on $ \overline{\mathbb{D}} $ is called a Carleson measure for the Dirichlet space if 
 $$\int_{\overline{\mathbb{D}}} |g(z)|^2 \dd\mu(z) \lesssim \|g\|_{\mathcal{D}}^{2}\qquad g \in \mathcal{D}. $$
Carleson measures and multipliers  for $\mathcal{D}$ and  arise in various contexts, notably in the characterization of multipliers and interpolation sequences \cite{S}, as well as in the study of the corona theorem \cite{DKSTW,CSW,T}. This is also related to the weak product of the Dirichlet space \cite{AHRM2}.
Hartz \cite{H} recently generalized the result of \cite{RW}, showing that the multiplier algebra of the weak product coincides with that of the Dirichlet space, with equality of norms.  The question of identifying Carleson measures has been the subject of extensive research by numerous authors over the years.
 The characterization of Carleson measures given by Arcozzi, Rochberg, and Sawyer Theorem \ref{ARS}, see also \cite{ARSW1,ARSW2,ARS1,ARS2}, will allow us to give examples of distance functions that belong to  the algebra of multipliers. 
In \cite{Onebox}, 
there is  a characterization based solely on a "Carleson box," similar in nature to that used in the Hardy space.  We will study whether the distance function $f_{\omega,E}$ belongs to $\mathcal{M(D)}$. This will allow us to provide other concrete examples of multiplicative functions of $\cD$ with minimal growth, see [8], where further examples are also provided. Since $\mathcal{M}(\cD)$ is contained in $\cD$ and in the algebra of bounded holomorphic functions $H^\infty$, we are only interested in outer functions, since  the only inner functions that belong to the Dirichlet space are finite Blaschke products. 

We now state two results: the first concerns the specific function $f_{\alpha, E}$, while the second applies to the general case.

\begin{thm}\label{CarlesonMeasure3}
Let $\alpha \in (0,1/2)$ and  let \( E \in   \mathcal{L}_1 \). The following assertions are equivalent:
\begin{enumerate}
\item \( f_{\alpha,E} \in \mathcal{M}(\cD) \)
\item The measure \(\dd\mu_{\alpha,E}:= \dist(\zeta, E)^{2\alpha-1}  |\dd \zeta| \) is a Carleson measure for \( \cD \).
\item 
 For all arcs \( I \), the following inequality holds:
{\small \[
\int_{I} \int_{I} \log \frac{1}{|\zeta - \xi|} \,\dd\mu_{\alpha,E} (\zeta)\dd\mu_{\alpha,E}(\xi)
\lesssim  \mu_{\alpha,E} (I).
\]
}
\end{enumerate}
\end{thm}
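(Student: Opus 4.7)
The plan is to split the proof into $(2) \iff (3)$, $(2) \Rightarrow (1)$, and $(1) \Rightarrow (2)$, using Theorem~\ref{estimationlocal}(1) as the bridge between the multiplier property of $f_{\alpha,E}$ and the Carleson property of $\dd\mu_{\alpha,E}$.

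The equivalence $(2) \iff (3)$ is an instance of the one-box characterization of Carleson measures for the Dirichlet space specialized to measures supported on $\T$: for such measures, the general ``Carleson box'' test from \cite{Onebox} collapses to the stated double integral against the logarithmic kernel over an arc, so this equivalence reduces to invoking that characterization.

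For $(2) \Rightarrow (1)$, first note that $|f_{\alpha,E}^*| = \dist(\cdot,E)^\alpha \le \pi^\alpha$, so $f_{\alpha,E} \in H^\infty$. Splitting
$$f_{\alpha,E}(\xi)g(\xi) - f_{\alpha,E}(\zeta)g(\zeta) = g(\xi)\bigl(f_{\alpha,E}(\xi) - f_{\alpha,E}(\zeta)\bigr) + f_{\alpha,E}(\zeta)\bigl(g(\xi) - g(\zeta)\bigr)$$
and applying Minkowski in $L^2(\T, |\dd\xi|/|\xi - \zeta|^2)$ yields the subadditive bound $\cD_\zeta(f_{\alpha,E} g) \lesssim |g(\zeta)|^2 \cD_\zeta(f_{\alpha,E}) + |f_{\alpha,E}(\zeta)|^2 \cD_\zeta(g)$. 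Integrating in $\zeta$ against $|\dd\zeta|/2\pi$, using Douglas' formula $\cD(h) = (2\pi)^{-1}\int_\T \cD_\zeta(h)|\dd\zeta|$ and the two-sided estimate $\cD_\zeta(f_{\alpha,E}) \asymp \dist(\zeta,E)^{2\alpha-1}$ from Theorem~\ref{estimationlocal}(1) (which applies since $E \in \mathcal{L}_1$ and $\alpha \in (0,1/2)$), we obtain
$$\cD(f_{\alpha,E} g) \lesssim \int_\T |g|^2 \, \dd\mu_{\alpha,E} + \|f_{\alpha,E}\|_\infty^2 \, \cD(g),$$
and the Carleson hypothesis (2) controls the first term by $\|g\|_\cD^2$, giving $f_{\alpha,E} \in \cM(\cD)$.

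The main obstacle is $(1) \Rightarrow (2)$, where a matching lower bound for $\cD(f_{\alpha,E} g)$ is needed to recover the Carleson embedding from the multiplier inequality. The approach I would pursue is to apply $\|f_{\alpha,E} g\|_\cD \lesssim \|g\|_\cD$ to explicit test functions $g_I$ adapted to an arbitrary arc $I \subset \T$ (for instance, a suitably normalized logarithmic kernel or a smooth bump with $\|g_I\|_\cD^2 \asymp \log(e/|I|)$ and $|g_I| \gtrsim 1$ on $I$), lower bound $\cD(f_{\alpha,E} g_I)$ via the outer structure of $f_{\alpha,E}$, and read off condition (3) directly. An alternative route is to upgrade the triangle inequality of the previous step to a Richter--Sundberg-type identity for outer $f_{\alpha,E}$ of the form $\cD_\zeta(f_{\alpha,E} g) = |f_{\alpha,E}(\zeta)|^2 \cD_\zeta(g) + |g(\zeta)|^2 \cD_\zeta(f_{\alpha,E}) + R(\zeta,g)$ with a controllable cross term $R$; integrating and absorbing $R$ would then give $\int_\T |g|^2 \, \dd\mu_{\alpha,E} \lesssim \|g\|_\cD^2$ after substituting the lower bound in Theorem~\ref{estimationlocal}(1). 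The delicate point in either route is handling the cross term or the test-function computation with uniform constants, and this is exactly where the $\mathcal{L}_1$ hypothesis (through the lower bound of Theorem~\ref{estimationlocal}(1)) is essential.
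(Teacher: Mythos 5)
Your architecture (reduce the multiplier property to a Carleson-measure statement for $\cD_\zeta(f_{\alpha,E})\,|\dd\zeta|$ via the two-sided estimate $\cD_\zeta(f_{\alpha,E})\asymp \dist(\zeta,E)^{2\alpha-1}$, then invoke a testing-condition characterization of Carleson measures) is exactly the paper's, and your $(2)\Rightarrow(1)$ argument is the paper's Lemma \ref{Dirichlet-local}. But the step you flag as ``the main obstacle,'' $(1)\Rightarrow(2)$, is not an obstacle at all, and leaving it as two uncarried-out speculative routes is a genuine gap. The point you are missing is that the very same decomposition you use for the forward direction is already two-sided: from
\[
g(\zeta)\bigl(f(\zeta)-f(\zeta')\bigr)=\bigl(f(\zeta)g(\zeta)-f(\zeta')g(\zeta')\bigr)-f(\zeta')\bigl(g(\zeta)-g(\zeta')\bigr)
\]
one gets, after dividing by $|\zeta-\zeta'|^2$ and integrating over $\T\times\T$ (Douglas's formula),
\[
\int_\T |g(\zeta)|^2\,\cD_\zeta(f)\,|\dd\zeta| \;\lesssim\; \cD(fg)+\|f\|_\infty^2\,\cD(g),
\]
because the ``cross term'' $\iint |f(\zeta')|^2|g(\zeta)-g(\zeta')|^2|\zeta-\zeta'|^{-2}$ is always dominated by $\|f\|_\infty^2\cD(g)$, in either direction of the inequality. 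So if $f_{\alpha,E}\in\cM(\cD)$, the right-hand side is $\lesssim\|g\|_\cD^2$, hence $\cD_\zeta(f_{\alpha,E})|\dd\zeta|$ is a Carleson measure for $\cD$; combined with the lower bound $\cD_\zeta(f_{\alpha,E})\gtrsim\dist(\zeta,E)^{2\alpha-1}$ of Lemma \ref{lemma1} (which holds for every Carleson set, no $\mathcal{L}_1$ needed for this direction), this gives $(2)$. Your second ``alternative route'' (controlling a cross term in a Richter--Sundberg-type expansion) is morally this argument, but you present it as delicate and do not execute it; no test functions are needed.

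A secondary issue: for $(2)\iff(3)$ you cite the one-box condition of \cite{Onebox}, but that result is only a \emph{sufficient} condition for a measure to be Carleson and cannot yield an equivalence. The paper instead uses the Arcozzi--Rochberg--Sawyer characterization (Theorem \ref{ARS}): $\mu$ is Carleson for $\cD$ iff $\int_{\overline{S(I)}}\int_{\overline{S(I)}}\re k_w(z)\,\dd\mu(w)\dd\mu(z)\lesssim\mu(\overline{S(I)})$, which for a measure supported on $\T$ reduces to the logarithmic double integral over arcs in $(3)$, since $\re k_w(z)\asymp\log\frac{1}{|z-w|}$ there. The substance of your claim is right, but the theorem you name does not prove it.
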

\begin{thm}\label{CarlesonMeasure2}
Let \( \omega \) be an increasing function such that \( {\omega(t^\gamma)} \) is concave for some \( \gamma >2 \), and \( {t \omega'(t)}/{\omega(t)} \) is increasing.  Let  \( E \in   \mathcal{L}_2 \). The following assertions are equivalent:
\begin{enumerate}
\item \( f_{\omega,E} \in \mathcal{M}(\cD) \)
\item The measure \(\dd\mu_{\omega,E}:= \dist(\zeta, E) \omega'(\dist(\zeta, E))^2 \, |\dd \zeta| \) is a Carleson measure for \( \cD \).
\item
 For all arcs \( I \), the following inequality holds:
{\small \[
\int_{I} \int_{I} \log \frac{1}{|\zeta - \xi|} \,\dd\mu_{\omega,E} (\zeta)\dd\mu_{\omega,E}(\xi)
\lesssim  \mu_{\omega,E} (I).
\]
}
\end{enumerate}
\end{thm}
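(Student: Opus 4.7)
The plan is to prove the two equivalences $(1)\Leftrightarrow (2)$ and $(2)\Leftrightarrow (3)$ separately. The key to the first is to reduce the multiplier condition to a Carleson embedding condition on $\T$ for a measure built from the local Dirichlet integral, and then to invoke Theorem~\ref{estimationlocal}(2) to identify this measure with $\mu_{\omega,E}$ up to comparability; this is precisely where the hypothesis $E\in\mathcal{L}_2$ enters. The key to the second is the one-box (energy) characterization of Carleson measures for $\cD$ applied to measures supported on $\T$.

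For $(1)\Leftrightarrow (2)$, first note that $f_{\omega,E}\in H^\infty$ automatically, since $\omega$ is continuous on $(0,\pi]$. Writing $fg(\zeta)-fg(\xi)=f(\zeta)(g(\zeta)-g(\xi))+g(\xi)(f(\zeta)-f(\xi))$ and combining the elementary bounds $|a+b|^2\le 2|a|^2+2|b|^2$ and $|a+b|^2\ge \tfrac12|a|^2-|b|^2$ with the Douglas identity $\cD(h)=\tfrac{1}{2\pi}\int_\T\cD_\zeta(h)\,|d\zeta|$ and Fubini, one obtains the two-sided estimate
$$\cD(fg)\asymp \|f\|_\infty^{2}\,\cD(g)+\int_\T |g^*(\zeta)|^2\,\cD_\zeta(f)\,|d\zeta|.$$
Hence $f\in\cM(\cD)$ is equivalent to $f\in H^\infty$ together with the Carleson embedding of $\cD$ into $L^2(\cD_\zeta(f)\,|d\zeta|)$. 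Replacing $\cD_\zeta(f_{\omega,E})$ by the comparable quantity $\dist(\zeta,E)\omega'(\dist(\zeta,E))^2$ provided by Theorem~\ref{estimationlocal}(2) then yields $(1)\Leftrightarrow (2)$.

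For $(2)\Leftrightarrow (3)$, since $\mu_{\omega,E}$ is supported on $\T$, this is the boundary/one-box version of the Arcozzi-Rochberg-Sawyer characterization (Theorem~\ref{ARS}), proved in the form needed here in \cite{Onebox}. The easy implication $(2)\Rightarrow (3)$ is obtained by testing the Carleson inequality against the truncated logarithmic potential $g(z)=\int_I\log\frac{1}{|z-\eta|}\,d\mu_{\omega,E}(\eta)$, whose squared Dirichlet norm is comparable to the double integral in (3); a Cauchy-Schwarz application then produces the $\mu_{\omega,E}(I)$ bound. The converse is the substantive content, obtained via a dyadic tree / capacitary decomposition on arcs.

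The main obstacle is this converse in $(2)\Leftrightarrow (3)$, which I plan to invoke as a known theorem from \cite{Onebox,ARS1} rather than reprove. Within the argument proper, the delicate point is justifying that the lower half of the product estimate genuinely recovers a Carleson condition on all of $\cD$ (not merely on polynomials); this uses the a.e.\ existence of boundary values for $H^2$ functions together with the density of polynomials in $\cD$.
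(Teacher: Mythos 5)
Your proposal follows essentially the same route as the paper: the equivalence $(1)\Leftrightarrow(2)$ is obtained exactly as in the paper's Lemma \ref{Dirichlet-local} and Lemma \ref{CarlesonMeasure1} (Douglas's formula plus the product decomposition, then Theorem \ref{estimationlocal}(2) to replace $\cD_\zeta(f_{\omega,E})$ by $\dist(\zeta,E)\omega'(\dist(\zeta,E))^2$ under the $\mathcal{L}_2$ hypothesis), and $(2)\Leftrightarrow(3)$ is the Arcozzi--Rochberg--Sawyer testing characterization (Theorem \ref{ARS}) specialized to a measure supported on $\T$, just as in the paper. Two small cosmetic points: your displayed two-sided estimate $\cD(fg)\asymp \|f\|_\infty^2\cD(g)+\int_\T|g|^2\cD_\zeta(f)\,|d\zeta|$ is literally false in the direction $\cD(fg)\gtrsim\|f\|_\infty^2\cD(g)$, though the pair of one-sided inequalities you actually derive is what is needed and is what the paper uses; and the equivalence $(2)\Leftrightarrow(3)$ comes from \cite{ARS1}, not from the one-box condition of \cite{Onebox}, which is only a sufficient criterion.
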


Recall that a function $f_{\omega,E}$  belongs to the Dirichlet space $\cD$ if and only if 
 $$ \int_{\T} \dist(\zeta, E)  \omega'(\dist(\zeta, E) )^2 \,\, |\dd \zeta| <\infty.$$
As an immediate consequence of Theorems 1.2 and 1.3, we obtain a simple geometric condition linking the set \( E \) with the function \( \omega \).
\begin{cor}
Let \( E \) be a closed subset of \( \T \), and let $\alpha \in (0,1/2)$. Under the same assumptions on \( \omega \) as above, we have:
\begin{enumerate}
\item If \( E \in \mathcal{L}_1 \), then \( f_{\alpha,E} \in \mathcal{M}(\mathcal{D}) \) whenever 
\[
\int_{\T} \frac{\log {1}/{\dist(\zeta, E)}}{\dist(\zeta, E)^{1-2\alpha}} \, |\dd \zeta| < \infty.
\]

\item If \( E \in \mathcal{L}_2 \), then \( f_{\omega,E} \in \mathcal{M}(\mathcal{D}) \) whenever 
\[
\int_{\T}  \dist(\zeta, E) \, \big(\omega'(\dist(\zeta,E))\big)^2 \log \frac{1}{\dist(\zeta, E)} \,\, |\dd \zeta| < \infty.
\]
\end{enumerate}
\end{cor}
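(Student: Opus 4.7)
The plan is to reduce $f_{\alpha,E}\in \mathcal{M}(\mathcal{D})$ (resp.\ $f_{\omega,E}\in \mathcal{M}(\mathcal{D})$) to the double-integral Carleson criterion via Theorem~\ref{CarlesonMeasure3} (resp.\ Theorem~\ref{CarlesonMeasure2}), and then to verify that the single-integral hypothesis of the corollary suffices to establish this criterion. Writing $\mu$ for the relevant measure ($\mu_{\alpha,E}$ or $\mu_{\omega,E}$) and $\ell(\zeta):=\dist(\zeta,E)$, the goal is to show that
$$J(I):=\int_I\int_I \log\frac{1}{|\zeta-\xi|}\,d\mu(\zeta)\,d\mu(\xi)\lesssim \mu(I)$$
for every arc $I\subset \T$. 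The central idea is to split the inner domain, for fixed $\zeta\in I$, into a far region $A(\zeta):=\{\xi\in I:|\zeta-\xi|\geq \ell(\zeta)/2\}$ and a near region $B(\zeta):=\{\xi\in I:|\zeta-\xi|<\ell(\zeta)/2\}$.

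On $A(\zeta)$ one has $\log(1/|\zeta-\xi|)\leq \log(2/\ell(\zeta))$, so the contribution of the far region to $J(I)$ is at most $\mu(I)\cdot\int_\T \log(2/\ell(\zeta))\,d\mu(\zeta)$. This last integral is, up to the additive constant $\log 2$, precisely the quantity whose finiteness is assumed in the corollary, which yields the bound $\lesssim \mu(I)$ for the far piece. On $B(\zeta)$, the triangle inequality yields $\ell(\xi)\asymp \ell(\zeta)$, so the density $h$ of $\mu$ with respect to arc measure is comparable at $\zeta$ and at $\xi$; a direct computation in polar coordinates on $\T$ around $\zeta$ then gives
$$\int_{B(\zeta)}\log\frac{1}{|\zeta-\xi|}\,d\mu(\xi)\lesssim h(\zeta)\,\ell(\zeta)\log\frac{1}{\ell(\zeta)}.$$
Integrating in $\zeta\in I$ reduces the near-region estimate to the pointwise inequality $h(\zeta)\,\ell(\zeta)\log(1/\ell(\zeta))\lesssim 1$.

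This pointwise inequality is the main technical step. For $\mu_{\alpha,E}$ it reads $\ell(\zeta)^{2\alpha}\log(1/\ell(\zeta))\lesssim 1$, which is immediate since $\alpha>0$. For $\mu_{\omega,E}$ it becomes $\ell(\zeta)^2\,\omega'(\ell(\zeta))^2\log(1/\ell(\zeta))\lesssim 1$, and I would derive this from the assumption that $\omega(t^\gamma)$ is concave for some $\gamma>2$: differentiating shows $t^{1-1/\gamma}\omega'(t)$ is decreasing, so $t\,\omega'(t)\lesssim t^{1/\gamma}$ for $t$ small, and consequently $\ell^2\omega'(\ell)^2\log(1/\ell)\lesssim \ell^{2/\gamma}\log(1/\ell)$, which is bounded on $(0,\pi]$. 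Combining the near and far estimates verifies the double-integral Carleson condition, and the corollary follows.
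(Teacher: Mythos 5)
Your overall strategy---reducing to the double-integral condition of Theorems \ref{CarlesonMeasure3} and \ref{CarlesonMeasure2}, bounding the far region $|\zeta-\xi|\ge \ell(\zeta)/2$ by $\mu(I)\int_{\T}\log(2/\ell)\,d\mu$, and reducing the near region to the pointwise inequality $h(\zeta)\ell(\zeta)\log(1/\ell(\zeta))\lesssim 1$---is sound, and is the natural way to carry out what the paper merely asserts as an ``immediate consequence''. Part (1) is fine. But there is a genuine error in the last step of part (2). You claim that concavity of $\omega(t^\gamma)$ gives $t\,\omega'(t)\lesssim t^{1/\gamma}$ for small $t$. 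Differentiating does show that $t^{1-1/\gamma}\omega'(t)$ is decreasing, but a decreasing function is \emph{large} near the origin: monotonicity yields $t^{1-1/\gamma}\omega'(t)\ge \pi^{1-1/\gamma}\omega'(\pi)$, i.e.\ the \emph{reverse} inequality $t\,\omega'(t)\gtrsim t^{1/\gamma}$. Concretely, for $\omega(t)=t^{\alpha}$ with $0<\alpha<1/\gamma$ (which satisfies all the standing hypotheses) one has $t^{1-1/\gamma}\omega'(t)=\alpha\,t^{\alpha-1/\gamma}\to\infty$. So the bound you invoke is false, and the pointwise inequality $\ell^{2}\omega'(\ell)^{2}\log(1/\ell)\lesssim 1$ on which your near-region estimate rests is not established.

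The inequality you need is nevertheless true, but it has to be extracted from the corollary's own integral hypothesis rather than from concavity alone. Since $s^{1-1/\gamma}\omega'(s)$ is decreasing, $\omega'(s)\ge \omega'(t)$ for $s\le t$, hence
\[
\int_{t/2}^{t}s\,\omega'(s)^{2}\log(1/s)\,ds\;\gtrsim\;t^{2}\,\omega'(t)^{2}\log(1/t),
\]
and the left-hand side is a tail of the convergent integral $\int_{0}^{c}s\,\omega'(s)^{2}\log(1/s)\,ds$ (finite because $N_{E}(t)\gtrsim 1$ for small $t$ and the corollary assumes $\int_{\T}\dist(\zeta,E)\,\omega'(\dist(\zeta,E))^{2}\log(1/\dist(\zeta,E))\,|\dd\zeta|<\infty$), so it tends to $0$ and the required bound follows. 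You should also justify, rather than merely assert, that the density $t\mapsto t\,\omega'(t)^{2}$ is comparable at $\ell(\xi)$ and $\ell(\zeta)$ when these are comparable: the lower bound on $\omega'$ comes from the monotonicity of $t^{1-1/\gamma}\omega'(t)$ as above, while the upper bound uses that $t\omega'(t)/\omega(t)$ is increasing together with the doubling of $\omega$, not concavity alone. With these two repairs the argument goes through.
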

Let \( (\zeta_n)_{n \geq 1} \) be a sequence of real numbers satisfying \( 0 < \zeta_n < 1/2 \) for all \( n \).  
Define  
\[
E_{(\zeta_n)} = \Big\{\exp\Big(2i\pi \sum_{n=1}^{\infty}\varepsilon_n \zeta_1\ldots\zeta_{n-1}(1 - \zeta_n), \quad \varepsilon_n =0\text{ or } 1\Big\}.
\]
The set $E_{(\zeta_n)}$ denotes the perfect set associated with the sequence $(\xi_n)$, for the construction of such a set, see \cite{KS}. When \( \zeta_n = \zeta \) for some fixed \( \zeta \in (0,1/2) \) and for all \( n \), we obtain the perfect symmetric set \( E_\zeta \) with constant ratio \( \zeta \).  
A notable example is the classical triadic Cantor set, which corresponds to \( \zeta = \frac{1}{3} \).
Recall that $\text{\rm dim}(E_\zeta)$, the Hausdorff dimension of $E_\xi$, is equal to $({\log 2})/{\log(1/\xi)}$.  Important examples of $\mathcal{L}$-sets are given by the generalized Cantor sets.  In fact, if   \( \limsup_n \zeta_n < 1/2 \), then $E_{(\zeta_n)}$ is an $\mathcal{L}_2$-set, see Lemma \ref{gkset} and  Proposition \ref{kholder}.  We use the one-box condition for Carleson measures in the Dirichlet space Theorem \ref{Onebox} (see \ref{Onebox} to derive multiplier conditions for the distance function \( f_{\omega,E} \) when \( E \) is a generalized Cantor set of positive logarithmic capacity. We show that, in this case, the distance function \( f_{\omega,E} \) is a multiplier if and only if it belongs to the Dirichlet space,  see Corollary \ref{corcantor}. In particular 
 {  We have 
the following result
 \begin{cor}  
Let $\xi \in (0, 1/2)$, and let $E = E_\xi$ be the perfect symmetric set of constant ratio $\xi$. Let $\alpha\in (0,1/2)$, the following assertions are equivalent:
\begin{enumerate}
\item $f_{\omega, E} \in \mathcal{D}$,
\item $f_{\omega, E} \in \mathcal{M}(\mathcal{D})$,
\item  $\displaystyle \int_{0}^1\omega'(t)^2 t^{1-\dim(E_\zeta)} \dd t <\infty.$
\end{enumerate}
\end{cor}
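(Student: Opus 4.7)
The plan is to prove the chain $(2)\Rightarrow(1)\iff(3)\Rightarrow(2)$. The implication $(2)\Rightarrow(1)$ is immediate from $\mathcal{M}(\cD)\subset\cD$, so the substance of the proof lies in the remaining two equivalences.

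For $(1)\iff(3)$, I would use the integral characterization of $\cD$-membership recalled just before the corollary, namely that $f_{\omega,E_\xi}\in\cD$ if and only if
$$J(\omega):=\int_\T \dist(\zeta,E_\xi)\,\omega'(\dist(\zeta,E_\xi))^2\,|\dd\zeta|<\infty.$$
Set $d=\dim(E_\xi)=\log 2/\log(1/\xi)$, so that $2\xi^d=1$. Decompose $\T\setminus E_\xi$ into its gaps: at generation $n\ge 1$, the Cantor construction produces $2^{n-1}$ gaps of length $\asymp \xi^{n-1}(1-2\xi)$, and on each gap the distance to $E_\xi$ varies between $0$ and half the gap length. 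Integrating gap by gap yields
$$J(\omega)\asymp \sum_{n\ge 1} 2^{n}\int_0^{c\xi^{n}} t\,\omega'(t)^2\,\dd t.$$
Exchanging sum and integral (Fubini on positive terms), the indices $n$ contributing for a fixed $t$ satisfy $\xi^n\ge t/c$, so that $\sum_{n\le N(t)}2^n\asymp 2^{N(t)}\asymp t^{-d}$. Therefore $J(\omega)\asymp \int_0^1 \omega'(t)^2 t^{1-d}\,\dd t$, which is precisely condition $(3)$. The assumption that $t\omega'(t)/\omega(t)$ is increasing keeps the integrand well-behaved for the swap and for the summation-by-parts.

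For $(1)\Rightarrow(2)$, I would invoke Theorem \ref{CarlesonMeasure2}. Since $\xi<1/2$, Lemma \ref{gkset} and Proposition \ref{kholder} give $E_\xi\in\mathcal{L}_2$, so $(2)$ is equivalent to $\mu_{\omega,E_\xi}=\dist(\zeta,E_\xi)\omega'(\dist(\zeta,E_\xi))^2|\dd\zeta|$ being a Carleson measure for $\cD$. Applying the one-box criterion Theorem \ref{Onebox} reduces the task to a single-arc estimate. By self-similarity of $E_\xi$, every arc $I$ is comparable (up to bounded constants) to a canonical Cantor subarc $I_{n,j}$ at some generation $n$, and for these the same reasoning as above gives $\mu_{\omega,E_\xi}(I_{n,j})\asymp \int_0^{c\xi^{n}} t\,\omega'(t)^2\,\dd t$, independent of $j$. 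The one-box inequality then collapses to a tail-control statement for $J(\omega)$, automatic once $J(\omega)<\infty$.

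The main obstacle lies in this last step: making the self-similar reduction to canonical Cantor subarcs rigorous for arbitrary arcs $I\subset\T$, and checking that the precise form of the one-box inequality is indeed dominated by the convergence of $J(\omega)$. The decisive algebraic coincidence is $2\xi^d=1$, which makes the generation-$n$ combinatorial factor $2^n$ exactly cancel the volume factor $\xi^{nd}$, so that the one-box condition at each scale reduces to the one-dimensional tail $\int_0^s\omega'(t)^2 t^{1-d}\,\dd t\to 0$ as $s\to 0^+$, a direct consequence of $(3)$.
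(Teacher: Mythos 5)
Your route is essentially the paper's: the paper also passes through $E_\xi\in\mathcal{K}\subset\mathcal{L}_2$ (Lemma \ref{gkset}, Proposition \ref{kholder}), reduces membership in $\mathcal{M}(\cD)$ to the measure $\dd\mu_{\omega,E}=\dist(\zeta,E)\omega'(\dist(\zeta,E))^2|\dd\zeta|$ being Carleson, verifies this by the one-box criterion of Theorem \ref{Onebox} using the counting function $N_E(t)\asymp t^{-\dim E_\xi}$ and the positivity of the logarithmic capacity of $E_\xi$ (Corollary \ref{corcantor}), and identifies $\int_\T\dist(\zeta,E)\omega'(\dist(\zeta,E))^2|\dd\zeta|$ with $\int_0^1\omega'(t)^2t^{1-\dim E_\xi}\,\dd t$ by the same gap-by-gap computation you describe (the paper records it for $\dist(\cdot,E)^{-\beta}$ in Remark~3 of Section~5).

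One quantitative slip should be fixed: for a canonical generation-$n$ arc $I$ you claim $\mu_{\omega,E_\xi}(I)\asymp\int_0^{c\xi^n}t\,\omega'(t)^2\,\dd t$, but $I$ contains $2^{m-n}$ gaps of generation $m$ for every $m\ge n$, so summing over gaps gives $\mu_{\omega,E_\xi}(I)\asymp\sum_{m\ge n}2^{m-n}\int_0^{c\xi^m}t\,\omega'(t)^2\,\dd t\asymp \xi^{nd}\int_0^{c\xi^n}\omega'(t)^2t^{1-d}\,\dd t$ with $d=\dim E_\xi$; your expression is only a lower bound for this, so as written the one-box \emph{upper} bound is not justified. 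With the corrected $\phi(s)=s^{d}\int_0^{cs}\omega'(t)^2t^{1-d}\,\dd t$ the argument closes exactly as you intend, since Fubini gives $\int_0^1\phi(x)\,\dd x/x\le d^{-1}\int_0^1\omega'(t)^2t^{1-d}\,\dd t$, which is the point where $d>0$ (equivalently $c(E_\xi)>0$, i.e. $\int_0^1\dd s/|E_s|<\infty$) enters in the paper's proof of Corollary \ref{corcantor}.
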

This result allows us to recover, in particular, the threshold for the inclusion of the power-distance outer functions $f_{\alpha,E_\xi}$ in the Dirichlet space and its multiplier algebra: 
$$f_{\alpha, E_\xi} \in \mathcal{D}\iff f_{\alpha, E_\xi} \in \mathcal{M}(\mathcal{D}) \iff \alpha > \frac{1}{2}\Dim(E_\xi).$$}

\subsection{Cyclicity in Dirichlet spaces}

We are also interested in the cyclicity in the Dirichlet space \( \cD(\mu) \). An important problem  is to identify functions \( f \in \cD(\mu) \) such that
\[
 \overline{\{ p f \ : \ p \text{ is a polynomial} \}} = \cD(\mu).
\]
These functions are called cyclic functions for $\mathcal{D}(\mu)$. When $\mu = 0$, the space 
$\mathcal{D}(\mu)$ reduces to the Hardy space $H^2$. Beurling’s theorem \cite{B} states that the cyclic vectors in 
$H^2$ are the outer functions. However, it is known that there exist outer functions in $\mathcal{D}(\mu)$ that are not cyclic in $\mathcal{D}(\mu)$. Indeed, the cyclicity of such a function depends on the distribution of the zeros of its radial limit  on the unit circle. When $\dd\mu(\zeta) =  \dd m(\zeta)/2\pi$, the normalized arc measure on $\T$, $\mathcal{D}(\mu)=\cD$, Recall that, as shown by Beurling \cite{el2014primer}, the radial limit of a function in the Dirichlet space exists quasi-everywhere. The Brown–Shields conjecture \cite{BS} asserts that $f \in \mathcal{D}$ is cyclic if and only if f is an outer function and the set of all zeros of the radial limit  $f^*$ has  logarithmic capacity zero.   An affirmative answer to this conjecture was given in \cite{EKR2,EKR3} for some Carleson sets in the context of the classical Dirichlet space, and in \cite{EEL,EEK,EEKAH,EKR,EL} for some $\cD(\mu)$.

It is important to note that if \( f \) is cyclic for \( \cD(\mu) \), then \( f \) is outer and \( c_\mu(\mathcal{Z}_\T(f)) = 0 \), where
\[
\mathcal{Z}_\T(f) = \left\{ \zeta \in \T : \inf_{r \to 1} |f(r\zeta)| = 0 \right\},
\]
and \( c_\mu \) is the capacity associated with \( \cD(\mu) \). The Brown-Shields conjecture \cite{BS} for \( \cD(\mu) \) asserts that the converse is also true.

Given $E \subset \mathbb{T}$ and $t \geq 0$, we define  
$$
E_t := \left\{\zeta \in \mathbb{T} : \dist(\zeta, E) \leq t \right\},$$
The following theorem is our main result. It extends \cite[Theorem 1.1]{EKR}, which was established for the classical Dirichlet space $\mathcal{D}.$

\begin{thm}
\label{thm:4.3}
Let \(f \in \mathcal{D}(\mu) \cap A(\D)\) be an outer function. Suppose that 
$
E = \mathcal{Z}_{\T}(f)
$
is a \(\mathcal{K}\)-set and 
$$
\mu(E_t) = O\bigl(h(t)\bigr), 
\quad t \to 0^+,
$$
where ${h(t)}/{t}\text{ is decreasing}$ and  \(h(t)/t^\sigma\) is   increasing for some $\sigma>0$. If 
\begin{equation}\label{integraldiversge}
\int_{0}^{2\pi} \frac{dt}{h(t)} = +\infty,
\end{equation}
then \(f\) is cyclic in \( \mathcal{D}(\mu)\).
\end{thm}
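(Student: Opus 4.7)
The plan is to construct a sequence of outer functions $u_n \in \cD(\mu) \cap A(\D)$, all vanishing on $E$, that belong to the cyclic subspace $[f]$ generated by $f$ in $\cD(\mu)$, and that converge to $1$ in $\cD(\mu)$. Since $[f]$ is shift-invariant, once $1 \in [f]$ every monomial $z^n$ lies in $[f]$, hence all polynomials do, and by density of polynomials in $\cD(\mu)$ the function $f$ is cyclic. The membership $u_n \in [f]$ will be handled by a standard outer-function approximation argument in the spirit of \cite{EKR,EEK}: since $f$ and $u_n$ both lie in $A(\D)$ and share the boundary zero set $E$, one approximates $u_n$ by $f\cdot(f+1/k)^{-1}u_n \in f\cdot\cM(\cD(\mu)) \subset [f]$ as $k\to\infty$.

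To construct the approximants, I use hypothesis \eqref{integraldiversge} to set
\[
H(t) := \int_t^\pi \frac{\dd s}{h(s)}, \qquad t\in(0,\pi],
\]
a decreasing function with $H(0^+)=+\infty$. Define $\omega_n(t):=\exp(-H(t)/n)$ for $t\in(0,\pi]$ and $\omega_n(0):=0$. Each $\omega_n$ is continuous, increasing, and satisfies $\omega_n(t)\to 1$ pointwise for $t>0$; moreover $t\omega_n'(t)/\omega_n(t) = t/(nh(t))$ is increasing because $h(t)/t$ is decreasing. Let $u_n := f_{\omega_n,E}$ be the outer function with boundary modulus $\omega_n(\dist(\cdot,E))$: this is well defined since $E$ is Carleson (every $\mathcal{K}$-set is), and a Taylor--Williams-type construction tailored to the $\mathcal{K}$-condition yields $u_n \in A(\D)$.

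To prove $u_n\to 1$ in $\cD(\mu)$, I treat the $H^2$ and Dirichlet parts separately. For the $H^2$ part, $|u_n|\le 1$, $u_n\to 1$ a.e.\ on $\T\setminus E$, and $\mu(E)=0$ (since $h(t)\lesssim t^\sigma\to 0$ from $h(t)/t^\sigma$ increasing), so dominated convergence gives $\|u_n-1\|_{H^2}\to 0$. For the Dirichlet part, $\cD_\zeta(u_n-1)=\cD_\zeta(u_n)$, and applying the upper-bound half of Theorem~\ref{estimationlocal} to $\omega_n$ yields
\[
\cD_\zeta(u_n) \lesssim \frac{\dist(\zeta,E)\,e^{-2H(\dist(\zeta,E))/n}}{n^2\,h(\dist(\zeta,E))^2}.
\]
Integrating against $\mu$, I push $\mu$ forward to $[0,\pi]$ via $\zeta\mapsto\dist(\zeta,E)$ to obtain a measure $\nu$ with $\nu([0,t])\le C h(t)$. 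A layer-cake/Riemann--Stieltjes integration by parts, combined with $th'(t)\le h(t)$ (from $h(t)/t$ decreasing) and the change of variables $u=H(t)/n$, reduces matters to
\[
\cD_\mu(u_n) \lesssim \frac{1}{n}\int_0^\infty e^{-2u}\,\dd u \longrightarrow 0.
\]

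The hardest part will be twofold. First, applying the upper bound of Theorem~\ref{estimationlocal} to the general $\omega_n$ under only the $\mathcal{L}_1$ (i.e.\ $\mathcal{K}$) hypothesis, since part (2) of that theorem is stated under $\mathcal{L}_2$: if the one-sided bound is not available off the shelf, I would replace the appeal to Theorem~\ref{estimationlocal} by a direct estimate from the Richter--Sundberg formula, leveraging the exponential smallness of $\omega_n$ near $E$ and the $\mathcal{K}$-set geometry. Second, the integration by parts against the (in general singular) push-forward $\nu$ requires a careful layer-cake argument, since $F(t):=t\omega_n(t)^2/(n^2h(t)^2)$ need not be monotone and boundary terms must be tracked. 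Both difficulties are standard in nature but constitute the technical core of the proof.
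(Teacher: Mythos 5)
Your overall strategy (build distance-type outer functions inside $[f]_{\cD(\mu)}$ that converge to a cyclic element) is the right one, and your quantitative estimate is essentially sound: with $\omega_n=e^{-H/n}$ one has $\delta\,\omega_n'(\delta)^2\le \delta/(n^2h(\delta)^2)$, and after the layer-cake reduction the substitution $u=H(t)$ gives $\cD_\mu(u_n)\lesssim n^{-2}\int_0^\infty e^{-2u/n}\,du\asymp 1/n\to 0$; this is exactly the mechanism by which the divergence hypothesis \eqref{integraldiversge} is used, and it matches the role played by the paper's Lemma on $\cD_\mu(f_{\omega,E})$ for $\mathcal{K}$-sets. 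However, there is a genuine gap at the step $u_n\in[f]_{\cD(\mu)}$, which is the actual crux of any cyclicity proof. The map $z\mapsto f(z)+1/k$ can vanish inside $\D$ (an outer function in $A(\D)$ may take the value $-1/k$), so $(f+1/k)^{-1}$ need not even be holomorphic; and even for a corrected regularization, the claims that $(f+1/k)^{-1}u_n\in\cM(\cD(\mu))$ and that $f(f+1/k)^{-1}u_n\to u_n$ in norm are precisely the statements that all the cited papers go to great lengths to avoid proving directly. The available tool (the paper's Theorem~\ref{thm:4.3} in Section~6: if $|g(z)|\lesssim \dist(z,E)^4$ on $\D$ then $g\in[f]_{\cD(\mu)}$) cannot be applied to your $u_n$: since $h(t)/t$ is decreasing, $H(t)\lesssim\log(1/t)$, so $\omega_n(t)\gtrsim t^{c/n}$ --- your $u_n$ is only \emph{polynomially} small near $E$ with exponent tending to $0$, not exponentially small as you assert, and the power of $u_n$ needed to reach $\dist^4$ decay grows like $n$.

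The paper resolves exactly this tension by not aiming at the constant $1$. It takes $\omega_\eta(t)=c_\eta t^\alpha$ on $[0,\eta]$ glued to $\bigl(\int_t^{2\pi}ds/h(s)\bigr)^{-\sigma}$ on $(\eta,\pi]$, so that $f_{\omega_\eta,E}^{4/\alpha}$ vanishes like $\dist(z,E)^4$ and lies in $[f]_{\cD(\mu)}$ by the $\dist^4$ criterion with a \emph{fixed} power; Richter--Sundberg then gives $f_{\omega_\eta,E}\in[f]_{\cD(\mu)}$, and letting $\eta\to0$ one reaches the outer function $g$ with $|g^*|=\bigl(\int_{\dist(\cdot,E)}^{2\pi}ds/h(s)\bigr)^{-\sigma}$, which is cyclic because both $g$ and $1/g$ belong to $\cD(\mu)$. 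To repair your argument you would need either to replace the division step by this $\dist^4$-plus-roots mechanism (which forces a fixed power-law cutoff near $E$, i.e.\ essentially the paper's $\omega_\eta$), or to supply an independent proof that $u_n\in[f]_{\cD(\mu)}$; as written, that step does not go through.
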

Clearly, \eqref{integraldiversge} implies  $c_\mu(E)=0$, since f is cyclic in $\mathcal{D}(\mu)$.
We show more generally  in Theorem \ref{polair2} that if the set E is an $\mathcal{L}_2$ set and \eqref{integraldiversge} is satisfied, then E is a polar set; equivalently, it has capacity zero.\\

{  The paper is organized as follows. 
In the next section, we present some elementary results on distance functions and provide a first estimate of the local Dirichlet integral of the distance function. 
In Section~3, we introduce the class \( \mathcal{L} \) and show that interpolation sets for Hölder spaces serve as examples. 
We also derive sharp estimates of the local Dirichlet integral for \( f_{\omega,E} \) and \( f_{\alpha,E} \), based on the Richter–Sundberg formula for outer functions in the Dirichlet space. We obtain an estimate of the local Dirichlet integral of the distance function \( f_{\omega,E} \) when \( \dist(\cdot, E)^{-\sigma} \in L^1(\T) \).   Section 4 is devoted to the membership of the distance function in $\mathcal{D}(\mu)$.
In Section~5, we apply Arcozzi’s theorem on Carleson measures to derive necessary and sufficient conditions for distance functions to be multipliers. 
Using our estimates of the local Dirichlet integral of distance functions, we obtain refined results in this framework. 
We also make use of the one-box condition for Carleson measures in the Dirichlet space to establish multiplier conditions for the distance function \( f_{\omega,E} \) when \( E \) is a generalized Cantor set of positive logarithmic capacity. 
Finally, in the last section, we first provide a sufficient condition for a closed subset of the unit circle to be polar. Then we investigate cyclicity in the space $\mathcal{D}(\mu)$, obtaining results that extend the classical case of the Dirichlet space.\\

\paragraph{A final word on notation.} We write \( A \lesssim B \) to mean that there exists a constant \( C > 0 \), independent of the relevant variables, such that \( A \leq C B \). Similarly, \( A \asymp B \) means that both \( A \lesssim B \) and \( B \lesssim A \) hold.

\section{\bf{Preliminary estimates of the local Dirichlet integral}}

Let $H^2$ be the Hardy space. Every function $f \in \mathcal{D}$ has non-tangential limits almost everywhere on $\mathbb{T}$. We denote by $f(\zeta)$ the non-tangential limit of $f$ at $\zeta \in \mathbb{T}$, if it exists. The function $f \in H^2$ is called an outer function if it is of the form
\[
f(z) = \exp\left(\frac{1}{2\pi} \int_\mathbb{T} \frac{\zeta + z}{\zeta - z} \log \varphi(\zeta) \, |\dd\zeta|\right), \qquad |z| < 1,
\]
where $\varphi$ is a nonnegative function in $L^1(\mathbb{T})$ such that $\log \varphi \in L^1(\mathbb{T})$. Note that $|f| = \varphi$ almost everywhere on $\mathbb{T}$.  The local Dirichlet integral of $f$  is  given by
\[
\mathcal{D}_\zeta(f) := \frac{1}{2\pi} \int_\mathbb{T} \frac{|f(\zeta) - f(\xi)|^2}{|\zeta - \xi|^2} |\dd\xi| < \infty.
\]

We recall the Richter–Sundberg formula \cite{RS}:
\begin{equation}\label{RSfrormula}
\mathcal{D}_{\zeta}(f) = \frac{1}{2\pi} \int_\mathbb{T} \frac{|f(\zeta')|^2 - |f(\zeta)|^2 - 2|f(\zeta)|^2 \log(|f(\zeta')|/|f(\zeta)|)}{|\zeta - \zeta'|^2} |\dd\zeta'|.
\end{equation}
Let $\zeta \in \mathbb{T} \setminus E$, and let $\Sigma$ be a Borel subset of $\mathbb{T}$. The local Dirichlet integral over $\Sigma$ is defined as 
$$
\cD_{\zeta, \Lambda}(f) = \int_{\Sigma} \frac{|f(\zeta')|^2 - |f(\zeta)|^2 - |f(\zeta)|^2 \log \left( {|f(\zeta')|^2}/{|f(\zeta)|^2} \right)}{|\zeta' - \zeta|^2} |\dd\zeta'|.
$$

Let $E$ be a closed subset of $\mathbb{T}$ of zero Lebesgue measure and let $\omega:\left[0,\pi\right]\rightarrow\mathbb{R}^+$ be a continuous function such that
\[\int_\mathbb{T}\log \omega(\dd(\zeta,E)) \vert \dd\zeta\vert>-\infty.\]
We associate the outer distance function
given by
$$f_{\omega,E}(z):=\exp\Big(\int_\T \frac{\zeta+z}{\zeta-z} \log \omega(\dist(\zeta,E)) \frac{|\dd\zeta|}{2\pi}\Big),\quad z\in \D.$$
In particular, 
$$|f^*_{\omega,E}(\zeta)|=\omega(\dd(\zeta,E))\quad \text{ a.e on } \T.$$

\subsection{First estimates of the local Dirichlet integral}

In order to estimate the local integral, we first decompose the circle into several subsets. The following sets will be used repeatedly throughout the paper:
\begin{align*}
I(\zeta, E) &:= \text{the connected component of } \mathbb{T} \setminus E \text{ containing } \zeta, \\
\Gamma(\zeta,E)& :=\Big\{\zeta'\in \mathbb{T} \setminus  I(\zeta, E) : \ \ \dd(\zeta', E) \leq \dd (\zeta, E)\Big\},\\
 \Sigma(\zeta,E)&:= \Big\{\zeta ' \in \T \setminus I(\zeta,E) : \ \ \dd (\zeta', E)  \geq \dd (\zeta, E) \Big\},\\
  \end{align*}
  
 \begin{lem}\label{lemma1} Let \( \omega \) be an increasing function such that \( {\omega(t^\gamma)} \) is concave for some \( \gamma >2 \), and \( {t \omega'(t)}/{\omega(t)} \) is increasing. Let  $E$ be a Carleson set of $\TT$ and let $\zeta \in \T \setminus E$. We have
\begin{align}
\cD _{\zeta, I(\zeta,E)}(f_{\omega,E}) &\lesssim \dist(\zeta,E)\omega'(\dist(\zeta,E))^2,\label{inI}\\
\cD _{\zeta, \Sigma(\zeta,E)}(f_{\omega,E}) &\lesssim\dist(\zeta,E)\omega'(\dist(\zeta,E))^2\label{inS},\\
\cD_\zeta(f_{\omega,E})&\gtrsim \dist(\zeta,E)\omega'(\dist(\zeta,E))^2\label{inD},
\end{align}
where the implied constants depend only on $\gamma$.
\end{lem}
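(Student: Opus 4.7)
The plan is to apply the Richter--Sundberg formula \eqref{RSfrormula} directly and to estimate the resulting kernel
\[
F(a,b) := b^2 - a^2 - 2a^2 \log(b/a), \qquad a := \omega(d(\zeta)), \ b := \omega(d(\zeta')),
\]
where I write $d := \dist(\zeta, E)$ for brevity. The basic inequalities I will use are: $F(a,b) \leq 2(b-a)^2$ whenever $b \geq a$ (an immediate consequence of $\log(1+x) \geq x - x^2/2$ for $x \geq 0$), $F(a,b) \asymp a^2 \log(a/b)$ for $b \ll a$, and $F(a,b) \asymp (b-a)^2$ whenever $b/a$ stays bounded away from $0$ and $\infty$. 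On the $\omega$ side, the concavity of $\omega(t^\gamma)$ yields both the one-sided power bound $\omega(s) \leq \omega(d)(s/d)^{1/\gamma}$ for $s \geq d$ and the $\gamma$-uniform comparability of $\omega'$ on $[d/2, 3d/2]$; the monotonicity of $t\omega'(t)/\omega(t)$ then controls the behaviour at smaller scales.

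For the lower bound \eqref{inD}, I restrict the Richter--Sundberg integral to a sub-arc of length $\asymp d$ centred at $\zeta$ and contained in $I(\zeta,E)$ (which exists because $d \leq |I(\zeta,E)|/2$). On this sub-arc one has $d(\zeta') = d \pm |\zeta' - \zeta| \in [d/2, 3d/2]$, so by the quasi-linearity of $\omega$ there the bound $|\omega(d(\zeta')) - \omega(d)| \asymp \omega'(d)\,|\zeta' - \zeta|$ holds, and the integrand is $\asymp \omega'(d)^2$. Integration over an arc of length $\asymp d$ yields the desired $\gtrsim d\,\omega'(d)^2$.

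For the upper bound \eqref{inI} on $I(\zeta,E)$, I set $I(\zeta,E) = (\alpha,\beta)$ with $|\zeta-\alpha|=d$ and $L = |I(\zeta,E)|\geq 2d$, and decompose the arc into: (i) a near-$\zeta$ zone ($|\zeta-\zeta'| \leq d/2$) whose contribution is $\lesssim d\,\omega'(d)^2$ exactly as in the lower bound; (ii) ``descending'' zones near the endpoints $\alpha$ and $\beta$ (where $d(\zeta') \leq d/2$), on which I use $F \lesssim a^2 \log(a/b)$, the lower estimates $|\zeta-\zeta'| \gtrsim d$ near $\alpha$ and $|\zeta-\zeta'| \gtrsim L/2 \gtrsim d$ near $\beta$, and the concavity to bound the resulting integral $\omega(d)^2 d^{-2} \int_0^{d/2} \log(\omega(d)/\omega(u))\,du$; (iii) a ``middle'' zone ($d(\zeta') \geq d$), nonempty only when $L > 2d$, where $F \leq 2\omega(d(\zeta'))^2$ and the core computation is
\[
\int_d^{L/2} \frac{\omega(s)^2}{s^2}\,ds \ \lesssim_\gamma\ d\,\omega'(d)^2,
\]
obtained by inserting the power bound $\omega(s) \leq \omega(d)(s/d)^{1/\gamma}$, which is integrable at infinity because $\gamma > 2$. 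The bound \eqref{inS} on $\Sigma(\zeta,E)$ is handled in the same spirit: on $\Sigma$ we have both $|\zeta-\zeta'|\geq d$ and $d(\zeta')\geq d$, hence $F \leq 2\omega(d(\zeta'))^2$, and a dyadic decomposition in $|\zeta-\zeta'|$ reduces the estimate to the same type of integral.

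The most delicate step is closing the loop between the $\int \omega(s)^2/s^2\,ds$ integrals that arise in zone (iii) and on $\Sigma$ and the target quantity $d\,\omega'(d)^2$. This is where both hypotheses on $\omega$ play a joint role: the concavity of $\omega(t^\gamma)$ produces the exponent $1/\gamma < 1/2$ that makes the integral convergent at infinity with the right order of magnitude, while the monotonicity of $t\omega'(t)/\omega(t)$ transfers this control to the scale $d$ itself. All resulting constants can be tracked back to $\gamma$ through the exponent $1/\gamma$ and the associated geometric-series / dyadic-sum evaluations, which is the sense in which the implied constants depend only on $\gamma$.
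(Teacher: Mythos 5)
Your overall architecture --- the Richter--Sundberg kernel $F(a,b)=b^2-a^2-2a^2\log(b/a)$, a decomposition of $\TT$ according to the size of $\dist(\zeta',E)$ relative to $\delta:=\dist(\zeta,E)$, and a lower bound extracted from the zone where $\dist(\zeta',E)\asymp\delta$ --- is essentially the paper's, and your treatment of the lower bound \eqref{inD} and of your zone (i) is correct. The gap lies in the kernel bounds you use away from that zone: each of them loses a factor of $\delta\omega'(\delta)/\omega(\delta)$ (or its square), and this quantity is only assumed \emph{increasing}, not bounded below, so the resulting estimates are not $\lesssim_{\gamma}\delta\,\omega'(\delta)^2$.

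Concretely, in zone (iii) and on $\Sigma$ you replace $F(a,b)$ by $2b^2=2\,\omega(\dist(\zeta',E))^2$ and reduce matters to the claim $\int_{\delta}^{L/2}\omega(s)^2s^{-2}\,ds\lesssim_{\gamma}\delta\,\omega'(\delta)^2$. This claim is false: already $\int_{\delta}^{2\delta}\omega(s)^2s^{-2}\,ds\geq\omega(\delta)^2/(4\delta)$, and for $\omega(t)=t^{\alpha}$ (admissible for every $\alpha\in(0,1/\gamma]$) one has $\omega(\delta)^2/\delta=\alpha^{-2}\,\delta\,\omega'(\delta)^2$, so no constant depending only on $\gamma$ can work; choosing $\omega$ with $t\omega'(t)/\omega(t)\to0$ shows that no constant works at all. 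The bound $F(a,b)\leq 2b^2$ discards the cancellation $F(a,b)\asymp(b-a)^2$, which persists as long as $\log(b/a)\lesssim1$ (in the power case, until $\dist(\zeta',E)\gg\delta e^{1/(2\alpha)}$, far beyond $c\delta$). You in fact state the correct inequality $F(a,b)\leq2(b-a)^2$; you need to keep it, write $\omega(s)-\omega(\delta)=\int_{\delta}^{s}\omega'(t)\,dt\leq\gamma\,\omega'(\delta)\,\delta^{1-1/\gamma}s^{1/\gamma}$ (since concavity of $\omega(t^{\gamma})$ makes $t^{1-1/\gamma}\omega'(t)$ decreasing), and only then integrate $s^{2/\gamma-2}$. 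The same loss occurs in your zone (ii): $F(a,b)\leq2a^2\log(a/b)$ is linear in $\log(a/b)$, whereas when $\omega(\dist(\zeta',E))$ is comparable to $\omega(\delta)$ --- which your condition $\dist(\zeta',E)\leq\delta/2$ does not exclude, e.g.\ for flat $\omega$ --- the truth is $F\asymp a^2\log^2(a/b)$; your computation therefore ends at $\omega(\delta)\,\omega'(\delta)$, again too large by the factor $\omega(\delta)/(\delta\omega'(\delta))$. The repair is the quadratic bound $F(a,b)\leq2a^2\log^2(a/b)$ combined with $\log\bigl(\omega(\delta)/\omega(u)\bigr)\leq\bigl(\delta\omega'(\delta)/\omega(\delta)\bigr)\log(\delta/u)$, which follows from the monotonicity of $t\omega'(t)/\omega(t)$. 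The paper sidesteps all of this by working throughout with the exact identity $F=4\int_{\delta'}^{\delta}\frac{\omega'(t)}{\omega(t)}\int_{t}^{\delta}\omega'(s)\omega(s)\,ds\,dt$, which carries two factors of $\omega'$ in every regime; any correct variant of your argument must retain that quadratic dependence on $\omega'$.
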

\begin{proof}
Let $\zeta \in \T\setminus E$, set $\delta = d(\zeta, E)$ and $\delta' = d(\zeta', E)$.  Let $c$ be a large universal constant and write $$I:=I(\zeta,E)= I_1\cup I_2\cup
 I_3,$$
 where
 $$I_1= \{\zeta ' \in I:\ \ \delta ' < \delta/c\}, \; I_2= \{\zeta ' \in I:\ \ \delta/c  \leq \delta ' < c\delta \},\; 
I_3= \{\zeta ' \in I:\ \ \delta ' \geq  c \delta\}.$$
For $\zeta \in I_1$ we have 
\begin{equation}\label{eqforme}
\omega^2(\delta')-\omega^2(\delta)-2\omega^2(\delta)\log({\omega(\delta')}/{\omega(\delta)})= 4\int^{\delta}_{\delta'}\frac{\omega'(t)}{\omega(t)}\int^\delta_t\omega'(s)\omega(s)\dd s\, \dd t\\
\end{equation}
Then 
\begin{eqnarray}\label{eqI1}
\cD _{\zeta, I_1}(f_{\omega,E})&  \asymp  &   \int_{I_1} \left(\int^{\delta}_{\delta'}\frac{\omega'(t)}{\omega(t)}\int^\delta_t\omega'(s)\omega(s)\dd s\,\dd t\right)\ \frac{|\dd\zeta '|}{|\zeta-\zeta '|^2 }\nonumber\\
& \asymp  &\frac{1 }{\delta^{2}} \displaystyle \int_{I_1}\left( \int^{\delta}_{\delta'}\frac{\omega'(t)}{\omega(t)}\int^\delta_t\omega'(s)\omega(s)ds dt\right)|\dd\zeta '|\nonumber\\
&\asymp &\frac{1 }{\delta^{2}} \displaystyle \int_{r=0}^{\delta/c} \int^{\delta}_{t=r}\frac{\omega'(t)}{\omega(t)}\int^{\delta}_{s=t}\omega'(s)\omega(s)\dd s\,\dd t\, \dd r\nonumber\\
&\asymp &\frac{1 }{\delta^{2}} \displaystyle \int_{t=0}^{\delta} t\frac{\omega'(t)}{\omega(t)}\int^{\delta}_{s=t}s\omega'(s)\omega(s)\frac{\dd s}{s} \dd t\\
&\lesssim & \omega'(\delta)^2\displaystyle \int_{t=0}^{\delta}\log(\delta/t) \dd t \asymp \delta \omega'(\delta)^2\nonumber.
\end{eqnarray}
Let $\zeta \in I_2$, for $t\in (\delta',\delta)$ and $s\in (t,\delta)$, we have $\omega(t)\asymp\omega(s)\asymp\omega(\delta)$. Moreover, using \eqref{eqforme}, we obtain 
$$\omega^2(\delta')-\omega^2(\delta)-2\omega^2(\delta)\log({\omega(\delta')}/{\omega(\delta)})\asymp (\delta-\delta')^2\omega'(\delta)^2.$$
Hence
 \begin{eqnarray}\label{estimationsuperieur}
\cD _{\zeta, I_2}(f_{\omega,E})&  \asymp  & \omega'(\delta)^2 \displaystyle \int_{I_2}\frac{(\delta ' -\delta )^2}{|\zeta-\zeta '|^2 }|\dd\zeta '|
\lesssim  \omega'(\delta)^2\displaystyle \int_{I_2}|\dd\zeta '| \asymp \delta \omega'(\delta)^2.
\end{eqnarray}

Remark that for $\zeta ' \in I_3$ we have $|\zeta-\zeta '| \asymp \delta ' -\delta \asymp \delta '$. Let $u =|\zeta-\zeta'|$, then
\begin{eqnarray*}
\cD _{\zeta, I_3}(f_{\omega,E})& =& 4 \displaystyle \int _{\zeta ' \in I_3}\int_{t= \delta}^{\delta'}\frac{\omega'(t)}{\omega(t)}\int_{s=\delta}^t\omega'(s)\omega(s)
\dd s\, \dd t \frac{|\dd\zeta '|}{|\zeta-\zeta ' |^2}\\
&\lesssim & \displaystyle \int _{u= \delta}^1\int_{t= \delta}^{u}\frac{\omega'(t)}{\omega(t)}\int_{s=\delta}^t\omega'(s)\omega(s)\dd s\, \dd t \frac{\dd u}{ u^2}\\
&\asymp & \int_{s= \delta}^1\int_{t=s}^1 \frac{\omega'(t)}{\omega(t)}\omega'(s)\omega(s) \frac{\dd t}{ t} \dd s \\
&\asymp& \int_{s= \delta}^1\omega'(s)\omega(s)\int_{t=s}^1 t^{1-1/\gamma}\frac{\omega'(t)}{\omega(t)}\frac{\dd t}{ t^{2-1/\gamma}} \dd s \\
&\lesssim & \int_{s= \delta}^1\omega'(s)\omega(s)  s^{1-1/\gamma}\frac{\omega'(s)}{\omega(s)}\int_{t=s}^1 \frac{\dd t}{ t^{2-1/\gamma}} \dd s \\
&\asymp&  \int_{s= \delta}^1s^{2-2/\gamma}\omega'(s)^2 \frac{\dd s}{s^{2-2/\gamma}}\\
&\lesssim &\delta^{2-2/\gamma} \omega'(\delta)^2 \int_{s=\delta}^{1}\frac{\dd s}{s^{2-2/\gamma}}\dd s\asymp \delta \omega ' (\delta) ^2.
 \end{eqnarray*} 
  
 The first inequality of Lemma 8 follows from combining the three estimates. The inequality \eqref{inS} can be derived in exactly the same way as in the case of  $I_3$ since for $\zeta ' \in \Sigma (\zeta, E)$, we have  $\delta '   \lesssim |\zeta-\zeta '|$.   Finally, \eqref{inD} follows from  \eqref{estimationsuperieur}.  And this completes the proof of the lemma.
\end{proof}
We now turn our attention to the local integral \( \mathcal{D}_{\zeta, \Gamma(\zeta,E)}(f_{\omega,E}) \). Let us recall that
 $$ \Gamma(\zeta,E) :=\Big\{\zeta'\in \mathbb{T} \setminus  I(\zeta, E) : \ \ \dd(\zeta', E) \leq \dd (\zeta, E)\Big\}.$$

\begin{lem}\label{lemma2} Let \( \omega \) be an increasing function such that  \( {t \omega'(t)}/{\omega(t)} \) is increasing. Let $E$ be a Carleson set. Then 
$$\cD _{\zeta, \Gamma(\zeta,E)}(f_{\omega,E})\lesssim  \dist(\zeta,E)^2\omega'(\dist(\zeta,E))^2\int_{\Gamma(\zeta,E)} \frac{\log^2(\dist(\zeta,E)/\dist(\zeta',E))}{|\zeta-\zeta '|^2 }|\dd \zeta '|.$$
\end{lem}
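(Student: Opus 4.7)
The strategy is to apply the Richter--Sundberg formula \eqref{RSfrormula} restricted to $\Gamma(\zeta,E)$, bound the integrand pointwise, and pull the $\zeta$-dependent factors outside the integral.

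First, set $\delta = \dist(\zeta,E)$ and $\delta' = \dist(\zeta',E)$, and let $a = \omega(\delta)$, $b = \omega(\delta')$. Since $\zeta' \in \Gamma(\zeta,E)$ forces $\delta' \le \delta$ and $\omega$ is increasing, we have $b \le a$. Introduce $u := \log(a/b) \ge 0$, so that $b = a e^{-u}$ and
\[
|f(\zeta')|^2 - |f(\zeta)|^2 - |f(\zeta)|^2 \log\bigl(|f(\zeta')|^2/|f(\zeta)|^2\bigr) \;=\; a^2\bigl(e^{-2u} - 1 + 2u\bigr).
\]
A direct computation shows that the function $\varphi(u) := e^{-2u} - 1 + 2u - 2u^2$ satisfies $\varphi(0) = \varphi'(0) = 0$ and $\varphi''(u) = 4e^{-2u} - 4 \le 0$ for $u \ge 0$, so $\varphi(u) \le 0$. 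Hence the integrand of \eqref{RSfrormula} is bounded above by
\[
2\,\omega(\delta)^2\,\log^2\bigl(\omega(\delta)/\omega(\delta')\bigr).
\]

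Second, I convert $\log(\omega(\delta)/\omega(\delta'))$ into $\log(\delta/\delta')$ using the monotonicity of $t\omega'(t)/\omega(t)$. For $t \in [\delta',\delta]$, that monotonicity gives $\omega'(t)/\omega(t) \le (\delta\omega'(\delta)/\omega(\delta))/t$, hence
\[
\log\bigl(\omega(\delta)/\omega(\delta')\bigr) \;=\; \int_{\delta'}^{\delta} \frac{\omega'(t)}{\omega(t)}\,\dd t \;\le\; \frac{\delta\omega'(\delta)}{\omega(\delta)}\,\log(\delta/\delta').
\]
Squaring and combining with the previous step yields the pointwise bound
\[
|f(\zeta')|^2 - |f(\zeta)|^2 - |f(\zeta)|^2 \log\bigl(|f(\zeta')|^2/|f(\zeta)|^2\bigr) \;\lesssim\; \delta^2\,\omega'(\delta)^2\,\log^2(\delta/\delta').
\]

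Finally, dividing by $|\zeta - \zeta'|^2$, integrating over $\Gamma(\zeta,E)$, and factoring out the (constant in $\zeta'$) prefactor $\delta^2 \omega'(\delta)^2$ gives the announced estimate. The steps are all routine once the substitution $u = \log(a/b)$ is made, and the only ingredient needed beyond elementary calculus is the assumption that $t\omega'(t)/\omega(t)$ is increasing, which is used exactly at the conversion from $\log(\omega(\delta)/\omega(\delta'))$ to $\log(\delta/\delta')$. No assumption on $E$ beyond being a Carleson set is required, since the geometric information about $E$ is entirely absorbed into the integral on the right-hand side.
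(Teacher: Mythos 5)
Your proof is correct, and it takes a mildly but genuinely different route from the paper's. The paper starts from the double-integral identity
\[
\omega^2(\delta')-\omega^2(\delta)-2\omega^2(\delta)\log\bigl(\omega(\delta')/\omega(\delta)\bigr)
=4\int_{\delta'}^{\delta}\frac{\omega'(t)}{\omega(t)}\int_{t}^{\delta}\omega'(s)\omega(s)\,\dd s\,\dd t,
\]
then bounds $s\omega'(s)\omega(s)$ by $\delta\omega'(\delta)\omega(\delta)$ (using that $s\omega'(s)\omega(s)=\bigl(s\omega'(s)/\omega(s)\bigr)\omega(s)^2$ is increasing), bounds $t\omega'(t)/\omega(t)$ by $\delta\omega'(\delta)/\omega(\delta)$, and evaluates $\int_{\delta'}^{\delta}\log(\delta/t)\,\dd t/t=\tfrac12\log^2(\delta/\delta')$. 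You instead dispose of the first reduction by the scalar inequality $e^{-2u}-1+2u\le 2u^2$ for $u\ge 0$ (your $\varphi''\le 0$ argument is fine), which gives the integrand $\lesssim \omega(\delta)^2\log^2\bigl(\omega(\delta)/\omega(\delta')\bigr)$ directly, and then apply the monotonicity of $t\omega'(t)/\omega(t)$ exactly once, in the form $\log\bigl(\omega(\delta)/\omega(\delta')\bigr)\le \frac{\delta\omega'(\delta)}{\omega(\delta)}\log(\delta/\delta')$. Both arguments use the same single structural hypothesis at the same place and yield identical constants up to universal factors; yours is a bit more streamlined in that it avoids manipulating the iterated integral, while the paper's version has the advantage that the double-integral representation \eqref{eqforme} is reused verbatim in the surrounding lemmas (notably in the lower bounds and in Lemma \ref{lemma2bis}), so it earns its keep there. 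Your closing observation that the Carleson hypothesis on $E$ enters only to make $f_{\omega,E}$ well defined is also accurate.
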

\begin{proof} Let $\delta = d(\zeta, E)$ and $\delta' = d(\zeta', E)$.  
\begin{eqnarray}\label{eqgamma}
\cD _{\zeta, \Gamma(\zeta,E)}(f_{\omega,E})&=& 4
\int_{\Gamma(\zeta,E)  }  \left(\int^{\delta}_{t=\delta'}  \frac{\omega'(t)}{\omega(t)} \int_{s=t}^{\delta}  s\omega'(s)\omega(s)\frac{\dd s}{s} \dd t \right)\ \frac{
|\dd\zeta '|}{|\zeta-\zeta '|^2 }\\
& \lesssim  &\int_{\Gamma(\zeta,E)  }\delta {\omega'(\delta)}{\omega(\delta)}  \left( \int^{\delta}_{\delta'}t\frac{\omega'(t)}{\omega(t)}\log\big({\delta}/{t} \big) \frac{\dd t}{t} \right)\frac{|\dd \zeta '|}{|\zeta-\zeta '|^2 }\nonumber\\
& \lesssim  &\int_{\Gamma(\zeta,E)  }\delta^2 {\omega'(\delta)^2}  \left( \int^{\delta}_{\delta'} \log\big({\delta}/{t} \big) \frac{\dd t}{t}  \right)\frac{|\dd \zeta '|}{|\zeta-\zeta '|^2 }\nonumber\\
&\asymp&  \delta^2\omega'(\delta)^2 \int_{\Gamma(\zeta,E)  }  \frac{\log^2(\delta/\delta')}{|\zeta-\zeta '|^2 }|\dd\zeta '| \nonumber.
\end{eqnarray}
This concludes the proof.
\end{proof}
We shall restrict ourselves to the case  $\omega _\alpha (t)=t^\alpha$. 
 Given $\alpha>0$ and let $E$ be a Carleson set, we associate with such a set the  outer distance function given by 
$$
f_{\alpha, E}(z) = \exp\left (\alpha \displaystyle \int _\TT\frac{e^{it}+z}{e^{it}-z}\log \dist (e^{it},E) \frac{\dd t}{2\pi} \right), \qquad z\in \DD.
$$
In particular, 
$$|f^*_{\alpha,E}(\zeta)| = \dd(\zeta,E)^\alpha \quad \text{a.e. on } \mathbb{T}.$$
Note that $f_{\alpha,E}\in \cD$ if and only if
$d(\cdot,E)^{2\alpha-1}\in L^1(\T)$. The following result provides an improvement of Lemma \ref{lemma2} for $\omega_\alpha$.
\begin{lem}\label{lemma2bis} Let $E$ be a Carleson set and let $\alpha \in (0,1/2)$. Then 
$$\cD _{\zeta, \Gamma(\zeta,E)}(f_{\alpha,E})\lesssim  \dist(\zeta,E)^{2\alpha}\int_{\Gamma(\zeta,E)} \frac{\log (\dist(\zeta,E)/\dist(\zeta',E))}{|\zeta-\zeta '|^2 }|\dd \zeta '|.$$
\end{lem}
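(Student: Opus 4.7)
The plan is to start from the Richter--Sundberg formula \eqref{RSfrormula} restricted to $\Gamma(\zeta,E)$ and exploit the specific power structure $\omega_\alpha(t)=t^\alpha$ in order to replace the quadratic $\log^2(\delta/\delta')$ factor that appeared in Lemma \ref{lemma2} by a linear $\log(\delta/\delta')$ factor.

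Concretely, I would first rewrite the numerator of the Richter--Sundberg integrand in terms of $\delta:=\dist(\zeta,E)$ and $\delta':=\dist(\zeta',E)$. Since $|f_{\alpha,E}(\zeta)|^2=\delta^{2\alpha}$ and $|f_{\alpha,E}(\zeta')|^2=\delta'^{2\alpha}$, a direct computation (or the special case $\omega=\omega_\alpha$ of the identity \eqref{eqforme} used in the proof of Lemma \ref{lemma2}) shows that this numerator equals
$$
\delta'^{2\alpha}-\delta^{2\alpha}+2\alpha\,\delta^{2\alpha}\log(\delta/\delta').
$$
For $\zeta'\in\Gamma(\zeta,E)$ one has $\delta'\le\delta$, so introducing the variable $u:=\log(\delta/\delta')\ge 0$ turns the above numerator into
$$
\delta^{2\alpha}\bigl(e^{-2\alpha u}-1+2\alpha u\bigr)=\delta^{2\alpha}\,g(u),
$$
with $g(u):=e^{-2\alpha u}-1+2\alpha u$.

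The second step is the elementary observation that $g(u)\le 2\alpha u$ for every $u\ge 0$, which follows immediately from $e^{-2\alpha u}\le 1$. Inserting this pointwise inequality under the integral yields
$$
\cD_{\zeta,\Gamma(\zeta,E)}(f_{\alpha,E})\le 2\alpha\,\delta^{2\alpha}\int_{\Gamma(\zeta,E)}\frac{\log(\delta/\delta')}{|\zeta-\zeta'|^2}\,|\dd\zeta'|,
$$
which is exactly the desired estimate.

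There is essentially no obstacle here: the whole improvement rests on the explicit expression for $g(u)$, which stems from the multiplicative behaviour $|f_{\alpha,E}(\zeta')|/|f_{\alpha,E}(\zeta)|=(\delta'/\delta)^{\alpha}$ characteristic of the power function. In the general setting of Lemma \ref{lemma2} no such exact form is available and one can only exploit the second-order vanishing of the Richter--Sundberg numerator at $\delta'=\delta$, which unavoidably produces a quadratic factor $\log^2(\delta/\delta')$. The hypothesis $\alpha\in(0,1/2)$ plays no role in the bound itself (the inequality $g(u)\le 2\alpha u$ holds for every $\alpha>0$); it is retained because this is precisely the regime in which the resulting linear-$\log$ estimate is the sharp one needed to match the exponent $2\alpha-1$ appearing in Theorem~\ref{estimationlocal}(1).
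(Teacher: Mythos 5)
Your proof is correct and is essentially the paper's argument in different clothing: the paper starts from the double-integral identity \eqref{eqgamma} and bounds the inner integral by $\int_{t}^{\delta}s^{2\alpha-1}\,\dd s\lesssim \delta^{2\alpha}$, which is exactly your pointwise inequality $e^{-2\alpha u}-1+2\alpha u\le 2\alpha u$ after integrating the closed form. Your remark that $\alpha\in(0,1/2)$ is not needed for the bound itself is also accurate.
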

\begin{proof}
 Let $\delta = d(\zeta, E)$ and $\delta' = d(\zeta', E)$.  By \eqref{eqgamma}
\begin{eqnarray*}
\cD _{\zeta, \Gamma(\zeta,E)}(f_{\alpha,E})&\asymp & 
\int_{\Gamma(\zeta,E)  }  \left(\int^{\delta}_{t=\delta'}  \frac{1}{t} \int_{s=t}^{\delta}  s^{2\alpha -1}\dd s \dd t \right)\ \frac{
|\dd\zeta '|}{|\zeta-\zeta '|^2 }\\
& \lesssim  &\delta ^{2\alpha}\int_{\Gamma(\zeta,E)  }  \int^{\delta}_{t=\delta'}  \frac{\dd t}{t} \frac{
|\dd\zeta '|}{|\zeta-\zeta '|^2 }\\
&\asymp&  \delta^{2\alpha} \int_{\Gamma(\zeta,E)  }  \frac{\log (\delta/\delta')}{|\zeta-\zeta '|^2 }|\dd\zeta '| \nonumber.
\end{eqnarray*}
This concludes the proof.

\end{proof}


\section{\bf{Minimal growth and $\mathcal{L}$-sets }}
 Our goal in this subsection is to characterize the class of subsets $E$ of $\TT$ having the minimal growth. By  Lemma \ref{lemma1}, this means that  
$$\cD_\zeta (f_{\omega,E}) \asymp \dist(\zeta,E) \omega'(\dist(\zeta,E))^2,\quad a.e. \ \zeta \in \TT.$$
To achieve minimal growth, we need to consider more regular Carleson sets. A closed set $E$ on $\T$ is said to be a $\mathcal{K}$-set  (after Kotochigov), we will write $E\in \cal{K}$, if there exists a positive constant $c_E$ such that for any arc $I \subset \T$,
\begin{equation}\tag{$\mathcal{K}$}
\sup_{\zeta\in I} \dist(\zeta, E) \geq c_E |I|,
\end{equation}
where $|I|$ denotes the length of $I$.  In other words, any arc contains a point whose distance to $E$ is on the order of the length of the arc. $\mathcal{K}$-sets appear as interpolation sets for Hölder classes \cite{D}, see also \cite{B1,B2}. Note that for every $\mathcal{K}$--set  $E$ there exists $\beta \in (0,1)$ such thatt  $\dist(\zeta,E)^{-\beta}$ is a $(A_2)$-Muckenhoupt weight, we refer to \cite{B1,B2,D,EKR} for more details. The following lemma gives us some metric  properties of $\mathcal{K}$-sets.

 \begin{lem}\label{kset} Let $E$ be a closed subset of $\T$. Then the following conditions are equivalent
\begin{enumerate}
\item $E\in \cal{K}$.
\item  There exists  $c>0$ such that for an arbitrary arc $I$  
$$ \displaystyle\frac{1}{|I|} \int_I \log (|I|/{\dist(\zeta,E)}) |\dd\zeta| \leq c.$$
\item  There exists  $c>0$ such that, for an arbitrary arc $I$  and for some $0<\beta<1$, we have 
$$
\frac{1}{|I|}\int_I {\dist(\zeta,E)^{-\beta}}|\dd\zeta| \le c |I|^{-\beta}
$$
\item There exists  $c>0$ such that, for an arbitrary arc $I$  and for some $0<\beta<1$, we have 
$$
\frac{1}{|I|}\int_I {\dist(\zeta,E)^{\beta}}|\dd\zeta| \geq c |I|^{\beta}
$$
\item  There exists  $\beta_E\in (0,1]$, such that for any $\beta \in (0,\beta_E)$ there exists $c>0$ and  for any arbitrary arc $I$,    we have 
$$ |\{\zeta \in I : d(\zeta,E) \le t\}| \leq c |I| (t/|I|)^{\beta_E}. $$
 \end{enumerate}
\end{lem}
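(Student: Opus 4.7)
The plan is to establish equivalence by the cycle $(1)\Rightarrow(5)\Rightarrow(3)\Rightarrow(2)\Rightarrow(1)$, with $(3)\Leftrightarrow(4)$ handled separately using Cauchy--Schwarz on one side and a direct argument back to $(1)$ on the other.

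The geometric core is $(1)\Rightarrow(5)$, which I would prove by strong induction on the ratio $|I|/t$. For $|I|/t$ bounded by a fixed constant the inequality in $(5)$ is trivial. For $|I|/t$ large, the $\mathcal{K}$-condition applied to $I$ produces a point $\zeta_0\in I$ with $\dist(\zeta_0,E)\geq c_E|I|$, hence a sub-arc $J\ni\zeta_0$ of length comparable to $c_E|I|$ that lies entirely outside $E_t$. This reduces the bound on $|I\cap E_t|$ to the same type of bound on the at most two remaining arcs $I_1,I_2$, whose total length is at most $(1-c'_E)|I|$. Summing the inductive hypothesis $|I_k\cap E_t|\leq C|I_k|(t/|I_k|)^\beta$ and using the concavity of $x\mapsto x^{1-\beta}$ to bound $|I_1|^{1-\beta}+|I_2|^{1-\beta}\leq ((1-c'_E)|I|)^{1-\beta}$ yields the desired inequality with the same constant $C$ and any exponent $\beta\in(0,1)$, the factor $(1-c'_E)^{1-\beta}<1$ giving the slack to close the induction.

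Next, $(5)\Rightarrow(3)$ follows from the layer-cake formula
\[
\int_I \dist(\zeta,E)^{-\beta}|\dd\zeta|=\beta\int_0^\infty s^{-\beta-1}|I\cap E_s|\,\dd s,
\]
splitting at $s=|I|$ and applying $(5)$ with some exponent $\beta'>\beta$ on the lower range while using the trivial bound $|I\cap E_s|\leq|I|$ on the upper range. For $(3)\Rightarrow(2)$, one uses the elementary inequality $\log x\leq x^\beta/\beta$ for $x\geq 1$: the positive part of the integrand in $(2)$ vanishes outside $\{\dist(\zeta,E)\leq|I|\}$, and there it is bounded by $(|I|/\dist(\zeta,E))^\beta/\beta$, so integrating reduces $(2)$ to $(3)$. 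The implication $(2)\Rightarrow(1)$ is by contradiction: if $(\mathcal{K})$ failed, then for arbitrarily small $\epsilon$ one could find an arc $I$ with $\sup_{\zeta\in I}\dist(\zeta,E)<\epsilon|I|$, forcing the average in $(2)$ to exceed $\log(1/\epsilon)$, which is unbounded.

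Finally, for $(3)\Leftrightarrow(4)$, Cauchy--Schwarz gives
\[
|I|^2\leq\Big(\int_I\dist(\zeta,E)^\beta|\dd\zeta|\Big)\Big(\int_I\dist(\zeta,E)^{-\beta}|\dd\zeta|\Big),
\]
so $(3)$ directly implies $(4)$ with the same exponent $\beta$; conversely $(4)\Rightarrow(1)$ is proved by contradiction as in $(2)\Rightarrow(1)$, since $\sup_{\zeta\in I}\dist(\zeta,E)<\epsilon|I|$ would force the left-hand side of $(4)$ to be at most $\epsilon^\beta|I|^\beta$. The main obstacle is the inductive step in $(1)\Rightarrow(5)$, where the geometric decay in total length must combine cleanly with the concavity argument to yield a genuine power bound for every $\beta<1$; the remaining implications are standard applications of Jensen- and H\"older-type inequalities together with direct contradiction arguments.
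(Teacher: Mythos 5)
The paper states this lemma without proof (it refers to Dyn'kin and Bruna), so I can only assess your argument on its own terms. Your overall architecture --- the cycle $(1)\Rightarrow(5)\Rightarrow(3)\Rightarrow(2)\Rightarrow(1)$ together with $(3)\Rightarrow(4)\Rightarrow(1)$ --- is sound, and the implications $(5)\Rightarrow(3)$ (layer-cake, splitting at $s=|I|$, using an exponent $\beta'>\beta$), $(3)\Rightarrow(2)$ (via $\log x\le x^\beta/\beta$), $(2)\Rightarrow(1)$ and $(4)\Rightarrow(1)$ (both by contradiction), and $(3)\Rightarrow(4)$ (Cauchy--Schwarz) are all correctly executed.

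However, the central step $(1)\Rightarrow(5)$ contains a genuine error: the inequality you attribute to concavity is reversed. For $s=1-\beta\in(0,1)$ one has $a^{s}+b^{s}\ge (a+b)^{s}$, not $\le$; the correct upper bound from Jensen is $|I_1|^{1-\beta}+|I_2|^{1-\beta}\le 2^{\beta}\bigl(|I_1|+|I_2|\bigr)^{1-\beta}\le 2^{\beta}\bigl((1-c'_E)|I|\bigr)^{1-\beta}$, and the inequality you wrote fails whenever both $I_1$ and $I_2$ are nonempty (take $|I_1|=|I_2|$). With the correct bound, the induction closes with the same constant only when $2^{\beta}(1-c'_E)^{1-\beta}\le 1$, i.e.\ $\beta\le \log\frac{1}{1-c'_E}\big/\log\frac{2}{1-c'_E}$ --- which is exactly the Dyn'kin threshold the paper quotes immediately after the lemma. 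Your claim that the argument yields ``any exponent $\beta\in(0,1)$'' is therefore false, not merely unproved. The damage is contained, because statement $(5)$ only asserts the existence of \emph{some} $\beta_E\in(0,1]$, and $(5)\Rightarrow(3)$ only needs exponents in a nonempty interval $(0,\beta_E)$; so the proof is repaired by accepting the factor $2^{\beta}$ and restricting $\beta$ below the threshold. You should also make the induction well-founded (e.g.\ induct on $n$ with $|I|/t\le 2^{n}$, or on $\lceil\log(|I|/t)/\log\frac{1}{1-c'_E}\rceil$) rather than ``on the ratio $|I|/t$'' over the reals, but that is a matter of presentation rather than substance.
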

All these conditions imply that \( \log \dist(\cdot,E) \in \mathrm{BMO} \) and that \(  \dist(\cdot,E)^{-\beta} \) satisfies the Muckenhoupt's condition $A_2$.
Dyn'kin in \cite{D}  showed that the value of $\beta$ in (3) is required to satisfy
$$\beta < \big({\log ({1}/{1-c_E}})\Big)\Big/ \Big({\log ({2}/{1-c_E}})\big).$$

Note that $E$ has measure zero and $\log \dist(\cdot, E) \in L^1(\T)$, thus $E$ is a Carleson set,  An important example  of a  $\mathcal{K}$-sets are the generalized Cantor sets,  
 see Proposition \ref{kholder}. Now we will introduce a class of subsets of $\TT$ which seems close to the class of $\mathcal{K}$-sets.

Recall that the connected component of $\mathbb{T} \setminus E$ containing $\zeta$ will be denoted by $I(\zeta, E)$  and let 
$$
\Gamma (\zeta , E):= \big\{\zeta' \in \T \setminus I(\zeta, E) :  \dist(\zeta ',E)\leq \dist(\zeta,E) \big\} 
$$
We next define the notion of $\mathcal{L}_1$ and $\mathcal{L}_2$-sets.
\begin{defi}
Let $E \subset \TT$ be a closed Carleson set.  
We say that $E$ belongs to the class $\mathcal{L}_1$ (respectively $\mathcal{L}_2$) if there exists a constant $C > 0$ such that, for every $\zeta \in \TT \setminus E$, the following inequalities hold:
 \begin{equation} \tag{$\mathcal{L}_1$}\displaystyle \int_{\Gamma (\zeta, E)} \frac{\log(\dist(\zeta,E))/\dist(\zeta',E))}{|\zeta- \zeta'|^2} |\dd\zeta' |
 \leq \frac{C}{\dist(\zeta,E)},\qquad \zeta \in \T \setminus E.
 \end{equation}
 \begin{equation} \tag{$\mathcal{L}_2$}\displaystyle \int_{\Gamma (\zeta, E)} \frac{\log^2(\dist(\zeta,E))/\dist(\zeta',E))}{|\zeta- \zeta'|^2} |\dd\zeta' |
 \leq \frac{C}{\dist(\zeta,E)},\qquad \zeta \in \T \setminus E.
 \end{equation}
The best constant $C$ in the above inequalities will be denoted by $C_E$.
\end{defi}

If $E$ is a $\mathcal{K}$-set,  \(  \dist(\cdot,E)^{-\beta} \) satisfies the Muckenhoupt's condition $A_2$,  the \( \mathcal{L}_1 \) condition is thereby fulfilled (indeed \( A_2 \) implies \( B_2 \), see \cite{B2}). The following result provides insight into the relationship between the class of $\mathcal{K}$-sets and the class $  \mathcal{L}_2$.
 \begin{lem}\label{gkset}  We have the following assertions
 \begin{enumerate}
\item $\cal{K} \subset   \mathcal{L}_2$.
\item  Let $E$ be a Carleson set. If for every closed subset $F\subset E$ we have $F \in   \mathcal{L}_2$ with $\sup \{C_F: \  F\subset E \} <\infty$ then $E \in \cal{K}$. 
 \end{enumerate}
 \end{lem}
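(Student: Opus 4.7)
The two assertions will be handled by different arguments.

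For part (1), I plan to deduce the $\mathcal{L}_2$ condition from the distributional characterization of $\mathcal{K}$-sets supplied by Lemma~\ref{kset}(5). Fix $\zeta \in \mathbb{T} \setminus E$ and set $\delta = \dist(\zeta, E)$. Since both endpoints of the gap $I(\zeta, E)$ lie at distance at least $\delta$ from $\zeta$, every $\zeta' \in \Gamma(\zeta, E)$ satisfies $|\zeta - \zeta'| \gtrsim \delta$. Using the layer-cake identity
\[
\log^2(\delta/\delta') \;=\; \int_0^\infty 2s \, \mathbf{1}_{\{\delta' < \delta e^{-s}\}} \, ds,
\]
I rewrite the $\mathcal{L}_2$-integral as $\int_0^\infty 2s\, F(s)\, ds$, where $F(s) := \int_{\Gamma(\zeta,E) \cap \{\delta' < \delta e^{-s}\}} |\zeta - \zeta'|^{-2}\,|d\zeta'|$. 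A dyadic decomposition into annuli $A_n := \{\zeta': 2^n\delta \le |\zeta - \zeta'| < 2^{n+1}\delta\}$, $n \ge 0$, together with Lemma~\ref{kset}(5) applied to each component of $A_n$ with $t = \delta e^{-s}$ and $|A_n| \asymp 2^n \delta$, yields, for the exponent $\beta_E\in(0,1]$ provided by that lemma,
\[
F(s) \;\lesssim\; \sum_{n \ge 0} \frac{(2^n\delta)^{1-\beta_E}(\delta e^{-s})^{\beta_E}}{(2^n\delta)^2} \;\lesssim\; \frac{e^{-s \beta_E}}{\delta}.
\]
Integration in $s$ then gives $\lesssim 1/\delta$, establishing $E \in \mathcal{L}_2$.

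For part (2) I argue by contraposition. Assume every closed $F \subset E$ satisfies $\mathcal{L}_2$ with $\sup_F C_F = C_* < \infty$ but $E \notin \mathcal{K}$; the goal is to produce $F$'s with $C_F$ arbitrarily large. By the failure of $\mathcal{K}$, for every small $\varepsilon > 0$ there exists an arc $I \subset \mathbb{T}$ with $\sup_{\zeta \in I} \dist(\zeta, E) < \varepsilon |I|$. Let $\zeta_0$ be the midpoint of $I$, let $J \subset I$ be the open sub-arc of length $|I|/4$ centered at $\zeta_0$, and define
\[
F := E \setminus J \;\subset\; E,
\]
which is closed. Since $F \cap J = \emptyset$ we have $\eta_F := \dist(\zeta_0, F) \ge |J|/2 = |I|/8$. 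Set
\[
R := \bigl\{ \zeta' \in I \setminus J : \dist\bigl(\zeta', \partial J \cup \partial I\bigr) \ge 4\varepsilon|I| \bigr\},
\]
so that $|R| \ge 3|I|/4 - 16\varepsilon|I| \ge |I|/2$ for small~$\varepsilon$.

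A triangle-inequality argument shows that $F$ is $\varepsilon|I|$-dense on $R$: for $\zeta' \in R$, the nearest point of $E$ to $\zeta'$ lies in the $\varepsilon|I|$-ball around $\zeta'$, which is contained in $I \setminus J$, hence in $F$. Moreover, for each $\zeta' \in R$ the midpoint $m$ of the sub-arc of $I$ joining $\partial J$ to $\zeta'$ satisfies $\dist(m, \partial J \cup \partial I) \ge 2\varepsilon|I|$, so the same density argument produces an element of $F$ in the $\varepsilon|I|$-ball around $m$, which is itself contained in that sub-arc. Thus $F$ separates $\zeta_0$ from $\zeta'$ along the short arc of $I$, so $\zeta' \notin I_F(\zeta_0)$; combined with $\dist(\zeta', F) \le \varepsilon|I| < \eta_F$ this gives $R \subset \Gamma_F(\zeta_0)$. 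On $R$ we have $\log^2(\eta_F/\dist(\zeta', F)) \ge \log^2(1/(8\varepsilon)) \gtrsim \log^2(1/\varepsilon)$ and $|\zeta_0 - \zeta'| \le |I|/2$, whence
\[
\int_{\Gamma_F(\zeta_0)} \frac{\log^2(\eta_F/\dist(\zeta', F))}{|\zeta_0 - \zeta'|^2}\, |d\zeta'| \;\gtrsim\; \frac{|R|}{|I|^2}\log^2(1/\varepsilon) \;\gtrsim\; \frac{\log^2(1/\varepsilon)}{|I|} \;\asymp\; \frac{\log^2(1/\varepsilon)}{\eta_F}.
\]
The $\mathcal{L}_2$-hypothesis for $F$ then forces $C_F \gtrsim \log^2(1/\varepsilon)$, and letting $\varepsilon \to 0^+$ contradicts $C_* < \infty$; hence $E \in \mathcal{K}$.

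The main obstacle lies in part (2): transferring the density of $E$ in $I$ to the modified set $F$ near the boundary of the carved hole $J$, and verifying that $F$ genuinely separates $\zeta_0$ from every point of $R$ along the short arc of $I$. Both are handled uniformly by the buffer $4\varepsilon|I|$ in the definition of $R$, which leaves room both for the triangle inequality (ensuring the closest $E$-point to any point of $R$ avoids $J$) and for the midpoint argument (ensuring the sub-arc from $\partial J$ to a point of $R$ meets $F$).
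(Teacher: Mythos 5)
Your proof is correct, but it takes a genuinely different route from the paper's, most notably in part~(2). For part~(1) the paper also works with dyadic annuli around $\zeta$, but instead of the layer-cake identity and the distribution estimate of Lemma~\ref{kset}(5) it invokes the integrated bound $\frac{1}{|I|}\int_I \log^2\bigl(|I|/\dist(\zeta',E)\bigr)\,|\dd\zeta'| \le c$ (obtained from items (1) and (4) of Lemma~\ref{kset}) on each annulus; the two mechanisms are close cousins, and yours is arguably cleaner since the exponential decay $e^{-s\beta}$ makes the convergence of $\int_0^\infty s\,F(s)\,ds$ transparent. For part~(2) the paper argues directly rather than by contraposition: given an arc $I$ it takes the closed subset $F=E\cap I$, applies the $\mathcal{L}_2$ inequality for $F$ at a point $\zeta$ sitting at distance $|I|/2$ outside $I$, and deduces $\frac{1}{|I|}\int_I\log(|I|/\dist(\zeta',E))\,|\dd\zeta'|\lesssim C_{E\cap I}^{1/2}$ by Cauchy--Schwarz, which is the characterization of $\mathcal{K}$ in Lemma~\ref{kset}(2). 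Your hole-carving construction $F=E\setminus J$ buys a quantitative statement (the $\mathcal{L}_2$ constant of $F$ must blow up at least like $\log^2(1/\varepsilon)$ when $E$ is $\varepsilon|I|$-dense on $I$), at the cost of a longer argument; the paper's choice $F=E\cap I$ lets the already-proved equivalences of Lemma~\ref{kset} do the work. One point in your write-up deserves tightening: to conclude $\zeta'\notin I_F(\zeta_0)$ you must check that \emph{both} arcs of $\mathbb{T}$ joining $\zeta_0$ to $\zeta'$ meet $F$, not only the short one; this does follow from your buffer construction, since the same density argument applied to a point of $I\setminus J$ on the opposite side of $J$ (which has length $3|I|/8$, leaving ample room for the $2\varepsilon|I|$ buffer) produces a point of $F$ on the long arc as well, but it should be said explicitly.
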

\begin{proof}
(1) Let $E \in \cal{K}$, by Lemma \ref{kset} (1), (4) and Jensen inequality we have 
$$ \displaystyle\frac{1}{|I|} \int_I \log^2 (|I|/{\dist(\zeta,E)}) |\dd\zeta| \leq c.$$
Put $\delta = \dist(\zeta,E)$, $\delta '= \dist(\zeta ',E)$.
{  \begin{eqnarray*}
\displaystyle \int_{\Gamma (\zeta, E)} \frac{\log^2(\delta/\delta ' )}{|\zeta- \zeta'|^2} |\dd\zeta' | & \leq & \displaystyle \sum _{n\geq 1} \int _{2^n \delta \leq |\zeta '-\zeta |\leq 2^{n+1}\delta} \frac{\log^2(\delta/\delta ')}{|\zeta- \zeta'|^2} |\dd\zeta' | \\
& \lesssim &  \displaystyle \sum _{n\geq 1}\frac{1}{2^{2n}\delta ^2} \int _{ |\zeta '-\zeta |\leq 2^{n+1}\delta} \log^2(2^{n+1}\delta/\delta ')|\dd\zeta' | \\
&\lesssim &  \displaystyle \sum _{n\geq 1}\frac{1}{2^{n}\delta } \asymp 1/\delta.
\end{eqnarray*}}
Then $E \in   \mathcal{L}$.\\

To prove the second assertion, let $E\in   \mathcal{L}_2$ and let $I$ be a closed arc.  Without loss of generality, we may assume that $E\cap I \neq \emptyset$. Let $\zeta \in \TT$ such that $\dist (\zeta, I) = |I|/2$ and let $J=\{ \zeta ' \in \TT: \ |\zeta-\zeta '|\leq |I|/2\}$. By assumption $E\cap I \in   \mathcal{L}_2$ then
\begin{eqnarray*}
{ \Big(\frac{1}{|I|} \int_I \log (|I|/{\dist(\zeta ',E)}) |\dd\zeta '|\Big)^2}& \lesssim & 
 \frac{1}{|I|} \int_I \log^2 (|I|/{\dist(\zeta ',E)}) |\dd\zeta '|\\
 & \lesssim &  \frac{1}{|I|} \int_{I\cup J}\log^2 (\dist(\zeta, E\cap I)/{\dist(\zeta ',E\cap I)}) |\dd\zeta '|\\
& \lesssim &|I|   \int_{I\cup J}\frac{ \log^2 (\dist(\zeta, E\cap I)/{\dist(\zeta ',E\cap I)})}{|\zeta-\zeta '|^2} |\dd\zeta '|\\
 & \lesssim & C_{E\cap I},\\
\end{eqnarray*}
then $E \in \cal{K}$.
\end{proof}

\subsection{Remarks and examples}\label{remarks}

{\bf 1.}  The class $  \mathcal{L}_2$ is strictly larger than $\cal{K}$. Indeed, let $\gamma >0$, $a_n=n^{-\gamma}$,  and let $E$ be the closed set given by
$$E= \{e^{\pm ia_n}: n\geq 1\}\cup \{1\}.$$ 
 One can verify easily that $E $ is not a $\mathcal{K}$-set. To verify that $E\in   \mathcal{L}_2$,  we have to prove that 
 $$\int_{\Gamma (\zeta, E)} \frac{\log^2(\dist(\zeta,E))/\dist(\zeta',E))}{|\zeta- \zeta'|^2} |\dd\zeta' |=O(1/\dist(\zeta,E)).$$
 Let $\zeta = e^{i\theta}$ with $\theta \in (a_{N+1}, a_{N})$ for some integer $N$.  
 Note that $I(\zeta,E)=I_N$, set $\delta=\dist(\zeta,E)$ and $\delta'=\dist(\zeta',E)$, hence the most relevant part of this integral is 
 $$ \int_{ \displaystyle \cup_{n\neq  N} I_{n}; \delta ' <\delta} \frac{\log^2(\dist(\zeta,E))/\dist(\zeta',E))}{|\zeta- \zeta'|^2} |\dd\zeta' |.$$
Note that $\int_{I_k}\log^2(\delta/\delta')\lesssim |I_k|\log^2(\delta/|I_k|)$. Then we have 
\[
\begin{array}{lll}
\displaystyle \int_{ \displaystyle \cup_{n\geq  2N} I_{n}} \frac{\log^2(\dist(\zeta,E))/\dist(\zeta',E))}{|\zeta- \zeta'|^2} |\dd\zeta' |& \lesssim &\displaystyle \sum_{k\geq 2N}\frac{1}{a_N^2}\int_{I_k}\log^2(\delta/\delta')|d\zeta'|\\
 & \lesssim &N^{2\gamma}\displaystyle \sum_{k\geq 2N} \frac{1}{k^{1+\gamma }}\log (\delta k^{1+\gamma})\\
 & \lesssim & N^{2\gamma}\delta^{\gamma/1+\gamma}\displaystyle \int _{2N}^\infty \frac{\log ^2x}{x^{\frac{1+2\gamma}{1+\gamma}}}dx\\
\\
 & \lesssim & N^{2\gamma}\delta^{\gamma/1+\gamma}\lesssim 1/\delta ^{\gamma/1+\gamma} \lesssim 1/\delta.\\
 \end{array}
 \]

If $N+2\leq k\leq 2N$,  $a_k-a_{k+1} \asymp N^{-\gamma-1}$ and $a_N-a_k\asymp (k-n)N^{-\gamma-1}$. Then
\[
\begin{array}{lll}
\displaystyle \int_{ \displaystyle \cup_{n=N+2}^{2N} I_{n}; \delta' <\delta} \frac{\log^2(\dist(\zeta,E))/\dist(\zeta',E))}{|\zeta- \zeta'|^2} |\dd\zeta' |& \lesssim &\displaystyle \sum_{k=N+2}^{2N}\frac{1}{(a_N-a_k)^2}\int_{I_k; \delta'<\delta} \log^2(\delta/\delta')|d\zeta'|\\
& \lesssim &\displaystyle \sum_{k=N+2}^{2N}\frac{N^{2\gamma +2}}{(k-N)^2}\int_{I_{k};\delta'<\delta} \log^2(\delta/\delta')|d\zeta'|\\
& \lesssim &\displaystyle \sum_{k=N+2}^{2N}\frac{N^{2\gamma +2}}{(k-N)^2}\int_{0}^\delta \log^2(\delta/t)|dt\\
& \asymp & \delta N^{2\gamma+2}\leq 1/\delta.
\end{array}
\]
With the same arguments, one can treat the integrals corresponding to the remaining parts and conclude that $E\in \cal{L}_2$.

{\bf 2. } One can verify that the set 
$$F= \{e^{- in^{-\gamma}}: n\geq 1\}\cup \{1\},$$
does not belong to $  \mathcal{L}_2$. Indeed, using the same notation as before, one can prove easily  that
$$
\displaystyle \int_{ \cup_{n\geq N}I_n} \frac{\log^2(\dist({\zeta},F))/\dist(\zeta',F))}{|\zeta- \zeta'|^2} |\dd\zeta' | \asymp \frac{\log^2 1/\dist({\zeta},F)}{\dist(\bar{\zeta},F)}.
$$

{\bf 3.}  { An important examples of a $\mathcal{K}$-sets and hence $  \mathcal{L}_2$-sets are the generalized Cantor sets. 
The following result is simple and likely well-known

\begin{prop}\label{kholder}
Let \(E\) be the Cantor set associated with the sequence \((\zeta_n)_{n \ge 1}\).  
If 
\[
\limsup_{n \to \infty} \zeta_n < \frac{1}{2},
\]
then \(E\) satisfies the \(\mathcal{K}\)-condition.
\end{prop}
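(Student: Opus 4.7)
The plan is to verify the $\mathcal{K}$-condition directly: for every arc $I \subset \T$, produce a point $\zeta_0 \in I$ with $\dist(\zeta_0,E) \gtrsim |I|$. By hypothesis there exist $N_0$ and $\zeta' \in (\limsup \zeta_n,\, 1/2)$ with $\zeta_n \leq \zeta'$ for all $n \geq N_0$. Set $\tau := 1 - 2\zeta' > 0$ and $L_n := \zeta_1\cdots \zeta_n$. First I would argue that the initial $N_0$ ratios only affect the condition for arcs of length comparable to $L_{N_0}$, where compactness yields a uniform bound; for smaller arcs one descends into a level-$N_0$ cell and works with the sub-Cantor set, which satisfies $\zeta_n \leq \zeta'$ from step $1$. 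Hence without loss we may assume $\zeta_n \leq \zeta'$ for every $n$.

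Fix an arc $I$ and let $n \geq 1$ be the integer with $L_n < |I| \leq L_{n-1}$. The key geometric fact is that each level-$(n-1)$ interval contains a central level-$n$ gap $G$ of length $g_n = L_{n-1}(1-2\zeta_n) \geq \tau L_{n-1} \geq \tau |I|$. Suppose first that $I$ is contained in a single level-$(n-1)$ interval $J = J_L \cup G \cup J_R$. Since $|I| > L_n = |J_L| = |J_R|$, connectedness of $I$ forces $I \cap G \neq \emptyset$. I would then split into three subcases: (i) $G \subset I$, giving distance $|G|/2 \geq \tau|I|/2$ at the midpoint of $G$; (ii) $|I \cap G| \geq |I|/2$, giving distance at least $|I \cap G|/2 \geq |I|/4$ at the midpoint of $I \cap G$; (iii) $|I \cap G| < |I|/2$, so by connectedness $I$ lies mostly in one of $J_L, J_R$, say $J_L$, with $|I \cap J_L| > |I|/2$. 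Since $|I \cap J_L| \leq L_n$, this case forces $|I| < 2L_n$.

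The recursive subcase (iii) is handled by one further descent. Set $I_1 := I \cap J_L$, so $|I_1| > |I|/2 > L_n/2$. Because $\zeta_{n+1} \leq \zeta' < 1/2$, the sub-interval $J_L$ contains a central level-$(n+1)$ gap $G'$ of length $g_{n+1} \geq \tau L_n$, and its two sub-intervals have length $L_{n+1} < L_n/2 < |I_1|$, so again $I_1 \cap G' \neq \emptyset$. The crucial estimate is that, since $|I_1| > L_n/2 > L_{n+1}$ and $I_1$ is attached to the right endpoint of $J_L$, a direct computation shows $|I_1 \cap G'| \geq |I_1| - L_{n+1} \geq g_{n+1}/2 \geq \tau L_n/2$. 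The midpoint of $I_1 \cap G'$ then lies at distance at least $\tau L_n /4 > \tau|I|/8$ from $E$, terminating the recursion after a single step.

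Finally, if $I$ is not contained in any single level-$(n-1)$ interval, then $I$ crosses a gap of level $k \leq n-1$ whose length is at least $\tau L_{k-1} \geq \tau L_{n-2}$, and the same three-subcase analysis, possibly followed by one descent into an adjacent level-$k$ interval, yields the same bound. Combining all cases gives $\sup_{\zeta \in I}\dist(\zeta,E) \geq (\tau/8)|I|$, so $c_E = \tau/8 = (1-2\zeta')/8$ works. The main obstacle is the bookkeeping in the recursive subcase: the hypothesis $\zeta_n \leq \zeta' < 1/2$ is precisely what guarantees $L_{n+1} < L_n/2$ and $g_{n+1} \geq \tau L_n$, which together force $I_1 \cap G'$ to be large enough to avoid a second level of recursion.
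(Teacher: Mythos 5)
Your strategy is the same one the paper uses: reduce to the case $\zeta_n\le\zeta'<1/2$ for all $n$, and then exhibit, inside any arc $I$, (part of) a construction gap of length $\gtrsim(1-2\zeta')|I|$ whose midpoint is far from $E$. The paper compresses the combinatorial part into the assertion that one may ``reduce to the case $L=I_{k,n}$'' and then takes the midpoint of the basic interval; your subcases (i)--(iii), including the one-step descent in (iii), are a correct and complete expansion of exactly that reduction for arcs contained in a single level-$(n-1)$ cell. That part of your argument checks out: the inequality $|I_1|-L_{n+1}>L_n(1-2\zeta_{n+1})/2=g_{n+1}/2$ is right, and the midpoint of $I_1\cap G'$ does lie at distance at least $|I_1\cap G'|/2$ from $E$.

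The one step that does not work as written is the final case, where $I$ meets a gap $\tilde G$ of level $k\le n-1$ and you claim the same analysis ``possibly followed by one descent into an adjacent level-$k$ interval'' suffices. The adjacent level-$k$ interval $J'$ has length $L_k\ge L_{n-1}\ge|I|$, which can be enormously larger than $|I'|:=I\setminus\tilde G$ when the ratios $\zeta_j$ are small; the arc $I'$, pinned to an endpoint of $J'$, then lives entirely inside the level-$(k+1)$ child at that endpoint and never sees the central gap of $J'$, so a single descent locates nothing. The repair is routine but needs more than one step: let $m$ be minimal with $L_m<|I'|$ (so $L_{m-1}\ge|I'|$ and $m\le n+1$), and let $J''$ be the level-$m$ cell sharing the relevant endpoint with $J'$, so $J''\subset I'$. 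If $L_m\ge|I'|/2$, the central gap of $J''$ has length $\ge\tau L_m\ge\tau|I'|/2$ and is contained in $I'$; if $L_m<|I'|/2$, then $I'$ penetrates the central gap $G''$ of the level-$(m-1)$ parent by $|I'|-L_m\ge|I'|/2$, and $|I'\cap G''|\ge\min\bigl(|I'|/2,\tau L_{m-1}\bigr)\ge\tau|I'|/2$. Either way the midpoint of the resulting subarc of a gap gives $\sup_{\zeta\in I}\dist(\zeta,E)\gtrsim\tau|I|$. With this substitution your proof is complete and, constants aside, establishes the same bound $c_E\asymp 1-2\zeta'$ as the paper's.
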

\begin{proof}
We want to show that \( \sup_{\zeta \in L} \mathrm{d}(\zeta, E) > c|L| \). Since the union of finitely many \( \mathcal{K} \)-sets is again a \( \mathcal{K} \)-set, we may assume \( \zeta_n < \eta < \tfrac{1}{2} \) for $n\geq 1$ .  It is also easy to see that we may reduce to the case where \( L = I_{k,n} \) for some \( n \geq 1 \), \( 1 \leq k \leq 2^n \), and that 
$
\sup_{\zeta \in I_{k,n}} \mathrm{d}(\zeta, E) \asymp \mathrm{d}(\widetilde{\zeta}, E),
$
where \( \widetilde{\zeta} \) is the midpoint of \( I_{k,n} \). To conclude, we observe that
\[
\mathrm{d}(\widetilde{\zeta}, E) \geq  \pi (\zeta_1\ldots\zeta_n)(1 - 2\zeta_{n+1}) 
\geq \frac{(1 - 2\eta)}{2}{|I_{k,n}|}. \qedhere
\]
\end{proof}
}
\subsection{Proof of Theorem \ref{estimationlocal}}

Theorem \ref{estimationlocal} follows from Lemma \ref{estimationgamma}, Lemma  \ref{lem9} and Lemma \ref{lem11}.

\begin{lem}\label{estimationgamma}  Let \( \omega \) be an increasing function such that \( {\omega(t^\gamma)} \) is concave for some \( \gamma >2 \), and \( {t \omega'(t)}/{\omega(t)} \) is increasing. Let $E\in   \mathcal{L}_2$ then  
\begin{equation}\label{eqfomega}
\cD_\zeta(f_\omega)\asymp\dist(\zeta,E)\omega'(\dist(\zeta,E))^2,
\end{equation}
where the implied constants depend only on  $\gamma$.
\end{lem}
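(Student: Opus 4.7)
The plan is to combine the decomposition of $\mathbb{T} \setminus \{\zeta\}$ used in Section 2 with the defining property of the class $\mathcal{L}_2$, and then assemble the upper bound out of the three pieces coming from Lemma \ref{lemma1} and Lemma \ref{lemma2}. The lower bound is essentially free: the inequality \eqref{inD} of Lemma \ref{lemma1} already yields
\[
\cD_\zeta(f_{\omega,E}) \gtrsim \dist(\zeta,E)\,\omega'(\dist(\zeta,E))^2,
\]
with no regularity assumption on $E$ beyond being a Carleson set, so the only real task is the matching upper bound.

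For the upper bound I would write, for $\zeta \in \T \setminus E$,
\[
\cD_\zeta(f_{\omega,E}) = \cD_{\zeta, I(\zeta,E)}(f_{\omega,E}) + \cD_{\zeta, \Gamma(\zeta,E)}(f_{\omega,E}) + \cD_{\zeta, \Sigma(\zeta,E)}(f_{\omega,E}),
\]
since $\mathbb{T} \setminus (\{\zeta\} \cup E) = I(\zeta,E) \cup \Gamma(\zeta,E) \cup \Sigma(\zeta,E)$ up to the null set $E$. The first and third summands are controlled directly by \eqref{inI} and \eqref{inS} of Lemma \ref{lemma1}, each bounded by $\dist(\zeta,E)\,\omega'(\dist(\zeta,E))^2$ (this is the place where the assumptions on $\omega$, namely concavity of $\omega(t^\gamma)$ for some $\gamma>2$ and monotonicity of $t\omega'(t)/\omega(t)$, enter through Lemma \ref{lemma1}).

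The whole point is then the middle term $\cD_{\zeta,\Gamma(\zeta,E)}(f_{\omega,E})$, which is the only piece for which mere Carleson regularity of $E$ is insufficient. Here I apply Lemma \ref{lemma2} to get
\[
\cD_{\zeta,\Gamma(\zeta,E)}(f_{\omega,E}) \lesssim \dist(\zeta,E)^2\,\omega'(\dist(\zeta,E))^2 \int_{\Gamma(\zeta,E)} \frac{\log^2(\dist(\zeta,E)/\dist(\zeta',E))}{|\zeta-\zeta'|^2}\,|\dd\zeta'|,
\]
and then invoke the hypothesis $E \in \mathcal{L}_2$, which by definition provides
\[
\int_{\Gamma(\zeta,E)} \frac{\log^2(\dist(\zeta,E)/\dist(\zeta',E))}{|\zeta-\zeta'|^2}\,|\dd\zeta'| \lesssim \frac{1}{\dist(\zeta,E)}.
\]
Substituting this estimate gives $\cD_{\zeta,\Gamma(\zeta,E)}(f_{\omega,E}) \lesssim \dist(\zeta,E)\,\omega'(\dist(\zeta,E))^2$, matching the other two pieces.

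Summing the three contributions yields the desired upper bound, and together with \eqref{inD} this gives the equivalence $\cD_\zeta(f_{\omega,E}) \asymp \dist(\zeta,E)\,\omega'(\dist(\zeta,E))^2$. There is no genuine obstacle left once Lemma \ref{lemma1} and Lemma \ref{lemma2} are in hand: the $\mathcal{L}_2$ condition has been tailored precisely so that the $\log^2$-weighted integral produced by Lemma \ref{lemma2} absorbs one factor of $\dist(\zeta,E)$, and the proof is essentially a bookkeeping step assembling these ingredients.
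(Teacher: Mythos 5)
Your proposal is correct and follows exactly the paper's own argument: decompose $\TT$ into $I(\zeta,E)\cup\Gamma(\zeta,E)\cup\Sigma(\zeta,E)$, bound the first and third pieces by \eqref{inI} and \eqref{inS} of Lemma \ref{lemma1}, control the $\Gamma$ piece by Lemma \ref{lemma2} combined with the $\mathcal{L}_2$ hypothesis, and get the lower bound from \eqref{inD}. No differences worth noting.
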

\begin{proof} Let $\delta = d(\zeta, E)$ and $\delta' = d(\zeta', E)$.  Note that $\TT= I(\zeta,E)\cup \Sigma(\zeta,E)\cup \Gamma (\zeta,E).$
We have
$$
\cD _{\zeta}(f_{\omega,E})= \cD _{\zeta, I(\zeta,E)}(f_{\omega,E})+\cD _{\zeta, \Sigma(\zeta,E)}(f_{\omega,E})+\cD _{\zeta, \Gamma(\zeta,E)}(f_{\omega,E})
$$
\noindent  By Lemma \ref{lemma1}, we have $$
 \cD _{\zeta, I(\zeta,E)}(f_{\omega,E})+\cD _{\zeta, \Sigma(\zeta,E)}(f_{\omega,E}) \lesssim \dist(\zeta,E)\omega'(\dist(\zeta,E))^2.
 $$
\noindent  We also have by Lemma \ref{lemma2}

$$  \cD _{\zeta, \Gamma (\zeta,E)}(f_{\omega,E}) \lesssim  \dist(\zeta,E)^2\omega'(\dist(\zeta,E))^2\int_{\Gamma(\zeta,E)} \frac{\log^2(\dist(\zeta,E)/\dist(\zeta',E))}{|\zeta-\zeta '|^2 }|\dd \zeta '|.$$
\noindent  Since the set $E\in \mathcal{L}_2$, we obtain
$$  \cD _{\zeta, \Gamma (\zeta,E)}(f_{\omega,E}) \lesssim  \dist(\zeta,E)\omega'(\dist(\zeta,E))^2$$
Combining these inequalities,  we obtain \eqref{eqfomega}.
\end{proof}


\begin{lem}\label{lem9}
Let $\alpha \in (0,1/2)$.  Let $E$ be a Carleson set of $\T$. Then 
\begin{equation}\label{CK1}
\cD _\zeta(f_{\alpha,E}) \asymp \dist^{2\alpha -1}(\zeta, E),\qquad \zeta \in \T \setminus E
\end{equation}
if and only if $E\in   \mathcal{L}_1$.
\end{lem}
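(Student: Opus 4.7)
The plan is to decouple the lower and upper bounds of $\mathcal{D}_\zeta(f_{\alpha,E})$. Throughout, write $\delta = \dist(\zeta, E)$ and $\delta' = \dist(\zeta', E)$. Since $t\omega_\alpha'(t)^2 = \alpha^2 t^{2\alpha-1}$ for $\omega_\alpha(t) = t^\alpha$, inequality \eqref{inD} of Lemma \ref{lemma1} supplies the automatic lower bound $\mathcal{D}_\zeta(f_{\alpha,E}) \gtrsim \delta^{2\alpha-1}$ for any Carleson set $E$. Consequently, the equivalence reduces to showing that $E \in \mathcal{L}_1$ is equivalent to the matching upper bound $\mathcal{D}_\zeta(f_{\alpha,E}) \lesssim \delta^{2\alpha-1}$, uniformly in $\zeta \in \T \setminus E$.

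The forward direction ($E \in \mathcal{L}_1 \Rightarrow$ upper bound) follows directly from Lemmas \ref{lemma1} and \ref{lemma2bis}. Decomposing $\T = I(\zeta, E) \cup \Sigma(\zeta, E) \cup \Gamma(\zeta, E)$, inequalities \eqref{inI} and \eqref{inS} bound the contributions of $I(\zeta, E)$ and $\Sigma(\zeta, E)$ by a multiple of $\delta^{2\alpha-1}$. Lemma \ref{lemma2bis} then controls the piece over $\Gamma(\zeta, E)$ by $\delta^{2\alpha}$ times the $\mathcal{L}_1$ integral, which by hypothesis is $\lesssim \delta^{2\alpha-1}$.

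The converse is the substantive direction and proceeds by producing a reverse lower bound for $\mathcal{D}_{\zeta, \Gamma(\zeta,E)}(f_{\alpha,E})$ in terms of the $\mathcal{L}_1$ integral. Specializing the Richter--Sundberg formula \eqref{RSfrormula} to $\omega_\alpha$ and writing $u = \delta'/\delta \in (0,1]$, the integrand over $\Gamma(\zeta, E)$ takes the form $\delta^{2\alpha} g(u)/|\zeta - \zeta'|^2$, where
$$g(u) := u^{2\alpha} - 1 - 2\alpha \log u.$$
A one-line calculus check gives $g'(u) = 2\alpha(u^{2\alpha-1} - 1/u) < 0$ on $(0,1)$, hence $g$ is positive and decreasing there, with $g(u) \sim 2\alpha \log(1/u)$ as $u \to 0^+$. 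In particular, there exist $u_0 = u_0(\alpha) \in (0, 1)$ and $c_\alpha > 0$ such that $g(u) \geq c_\alpha \log(1/u)$ for all $u \leq u_0$. Therefore
$$\mathcal{D}_\zeta(f_{\alpha,E}) \geq \mathcal{D}_{\zeta, \Gamma(\zeta, E)}(f_{\alpha, E}) \gtrsim \delta^{2\alpha} \int_{\Gamma(\zeta, E) \cap \{\delta' \leq u_0 \delta\}} \frac{\log(\delta/\delta')}{|\zeta - \zeta'|^2}\, |d\zeta'|,$$
and the hypothesis forces the right-hand integral to be $\lesssim 1/\delta$. On the complementary range $\Gamma(\zeta, E) \cap \{\delta' > u_0 \delta\}$, the logarithm is bounded by $\log(1/u_0)$, and the elementary observation that any $\zeta' \in \Gamma(\zeta,E)$ lies outside $I(\zeta,E)$ and beyond one of its endpoints (which belongs to $E$) gives $|\zeta - \zeta'| \geq \delta$; hence this remainder contributes $\lesssim 1/\delta$ as well. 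Summing the two pieces yields the $\mathcal{L}_1$ inequality.

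The main technical point is the asymptotic analysis of $g$ near $u=0$, which produces the sharp logarithmic weight; without it one only recovers the $\log^2$ version used in Lemma \ref{lemma2}. A secondary subtlety is that $g(u) \asymp (1-u)^2$ near $u = 1$, which is smaller than $\log(1/u)$, so the reverse inequality cannot hold on all of $\Gamma(\zeta, E)$ at once — this forces the split at $u_0$ and the crude, but sufficient, treatment of the near-$1$ range via the trivial bound $|\zeta - \zeta'| \geq \delta$.
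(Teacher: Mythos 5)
Your proof is correct and follows essentially the same route as the paper: the forward direction and the lower bound come from Lemmas \ref{lemma1} and \ref{lemma2bis}, and for the converse you split $\Gamma(\zeta,E)$ at a threshold $\delta'\asymp\delta$, show the Richter--Sundberg integrand is bounded below by a constant times $\delta^{2\alpha}\log(\delta/\delta')/|\zeta-\zeta'|^2$ on the small-$\delta'$ piece, and dispose of the remaining piece via $|\zeta-\zeta'|\gtrsim\delta$, exactly as in the paper's decomposition into $\Gamma_1\cup\Gamma_2$. Your explicit analysis of $g(u)=u^{2\alpha}-1-2\alpha\log u$ is just a more detailed justification of the comparison the paper asserts on $\Gamma_1$.
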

 {\begin{proof} Let $\delta = d(\zeta, E)$ and $\delta' = d(\zeta', E)$.  Suppose that $E\in   \mathcal{L}_1$. By Lemma \ref{lemma1} and Lemma \ref{lemma2bis}, we have
 $$
 \cD _\zeta(f_{\alpha,E}) \lesssim \delta^{2\alpha }\displaystyle \int _{\Gamma(\zeta,E)}\frac{\log(\delta/\delta')}{|\zeta-\zeta'|^2}|\dd\zeta'|\lesssim \delta ^{2\alpha-1}.
 $$
The reverse inequality is proved in Lemma \ref{lemma1}.\\
 
 \noindent Conversely, suppose that (\ref{CK1}) holds. 
 Let $c>1$ be such that $2\alpha \log c  >1$. Write $\Gamma(\zeta,E)=\Gamma_1\cup\Gamma_2$ where 
 $$\Gamma_1=\{\zeta'\in \Gamma(\zeta,E)\text{ : } \delta'\leq \delta/c\}, \quad \text{ and } \quad  
 \Gamma_2=\{\zeta'\in \Gamma(\zeta,E)\text{ : } \delta/c \leq \delta'\leq \delta\}.$$
 Note that for $\zeta' \in \Gamma _1(\zeta, E)$, we have 
 $$
 \delta'^{2\alpha}-\delta^{2\alpha}- 2 \delta^{2\alpha}\log( \delta'^{\alpha}/ \delta^{\alpha}) \asymp  \delta^{2\alpha}log( \delta /\delta')
 $$
 Then we obtain
  $$\cD_{\zeta,\Gamma_1}(f_{\alpha,E})\asymp \delta^{2\alpha}\int_{\Gamma_1} \frac{\log(\delta/\delta')}{|\zeta-\zeta'|^2}|\dd\zeta'|, $$
  We also have 
  $$
\int_{\Gamma_2} \frac{\log(\delta/\delta')}{|\zeta-\zeta'|^2}|\dd\zeta'|\lesssim1/\delta 
  $$
The result comes from these two estimates combining with Lemma \ref{lemma1} and Lemma \ref{lemma2bis}.
 \end{proof}}
\begin{lem}\label{lem11}
Let $E$ be a Carleson set of $\T$ such that $|E_t|= O(t^\varepsilon )$ for some $\varepsilon \in (0,1)$. Let \( \omega \) be an increasing function such that \( {\omega(t^\gamma)} \) is concave for some \( \gamma >2 \), and \( {t \omega'(t)}/{\omega(t)} \) is increasing. Further assume that  $x\omega(x)\asymp x^2\omega'(x^2)$ then 
\begin{equation}\label{CK}
\cD _\zeta(f_{\omega,E}) \asymp  \dist(\zeta,E)\omega'(\dist(\zeta,E))^2,\qquad \zeta \in \T \setminus E
\end{equation}
if and only if $E\in   \mathcal{L}_2$.
\end{lem}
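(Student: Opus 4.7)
\emph{Plan.} The implication $E\in\mathcal{L}_2\Rightarrow\cD_\zeta(f_{\omega,E})\asymp\delta\omega'(\delta)^2$ (with $\delta=\dist(\zeta,E)$) is exactly Lemma~\ref{estimationgamma}, so only the converse needs work. Assume the estimate, fix $\zeta\in\T\setminus E$, set $\delta'=\dist(\zeta',E)$, choose $R>1/\varepsilon$ and a large constant $c_1$, and decompose
\[
\Gamma(\zeta,E)=A\cup B\cup C,\qquad A=\{\delta'\ge\delta/c_1\},\quad B=\{\delta^R\le\delta'<\delta/c_1\},\quad C=\{\delta'<\delta^R\}.
\]
Since $|\zeta-\zeta'|\ge\delta$ on all of $\Gamma$, the close region $A$ is handled immediately because $\log(\delta/\delta')=O(1)$ there, producing a contribution $\lesssim 1/\delta$. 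For the tail $C$ I would use the dimensional bound $|E_t|=O(t^\varepsilon)$: a layer-cake computation gives $\int_C\log^2(\delta/\delta')\,|\dd\zeta'|=O(\delta^{R\varepsilon}\log^2(1/\delta))$, which is $o(\delta)$ because $R\varepsilon>1$, so the $C$-contribution to the $\mathcal{L}_2$-integral is also $O(1/\delta)$.

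The middle region $B$ is where the hypothesis on $\omega$ is used. Writing the Richter--Sundberg integrand as $\psi(\delta,\delta')=\omega^2(\delta)\,h(\omega(\delta')/\omega(\delta))$ with $h(x)=x^2-1-2\log x$, I plan to establish the pointwise lower bound
\[
\psi(\delta,\delta')\gtrsim\delta^2\omega'(\delta)^2\log^2(\delta/\delta')\qquad\text{for every }\zeta'\in B.
\]
This comes from the elementary expansion $h(e^{-\eta})\asymp\eta^2$, valid uniformly for $\eta$ in any bounded interval, applied to $\eta:=\log(\omega(\delta)/\omega(\delta'))=\int_{\delta'}^{\delta}\alpha(t)\,\dd t/t$ with $\alpha(t):=t\omega'(t)/\omega(t)$. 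The additional hypothesis $x\omega'(x)\asymp x^2\omega'(x^2)$, together with the monotonicity of $\alpha$, is exactly what I would use to deduce the two scale-invariance facts $\alpha(t)\asymp\alpha(\delta)$ for $t\in[\delta^R,\delta]$ and $\alpha(\delta)\log(1/\delta)=O(1)$; these yield $\eta\asymp\alpha(\delta)\log(\delta/\delta')$ and $\eta=O(1)$ throughout $B$, whence $\psi\asymp\omega^2(\delta)\alpha(\delta)^2\log^2(\delta/\delta')=\delta^2\omega'(\delta)^2\log^2(\delta/\delta')$. The standing hypothesis then gives
\[
\int_B\frac{\log^2(\delta/\delta')}{|\zeta-\zeta'|^2}\,|\dd\zeta'|\lesssim\frac{\cD_\zeta(f_{\omega,E})}{\delta^2\omega'(\delta)^2}\lesssim\frac{1}{\delta},
\]
and summing the $A$-, $B$-, and $C$-estimates delivers the $\mathcal{L}_2$-condition.

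The main obstacle is this self-improving step: turning the scale-invariance $x\omega'(x)\asymp x^2\omega'(x^2)$ into both the pointwise doubling $\alpha(t)\asymp\alpha(\delta)$ across the polynomial range $[\delta^R,\delta]$ and the universal bound $\alpha(\delta)\log(1/\delta)=O(1)$. In the variable $y=\log(1/t)$ the condition becomes a doubling relation $|\phi'(y)|\asymp|\phi'(2y)|$ for $\phi(y):=\omega(e^{-y})$, and I expect the crux will be proving that integrability of $\omega$ at $0$ then forces $\phi(y)\asymp y|\phi'(y)|$, from which the two required properties of $\alpha$ follow. The choice $R>1/\varepsilon$ is exactly what is needed so that the geometric decay produced on $C$ beats the log-losses generated on $B$.
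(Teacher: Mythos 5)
Your plan is correct and is essentially the paper's own proof: sufficiency is quoted from Lemma~\ref{estimationgamma}, and for necessity the paper likewise splits $\Gamma(\zeta,E)$ at $\delta'=\delta^{\beta}$ with $\beta\varepsilon$ large, proves the pointwise lower bound $\Delta(\delta,\delta')\gtrsim\delta^{2}\omega'(\delta)^{2}\log^{2}(\delta/\delta')$ on the middle range by iterating $x\omega'(x)\asymp x^{2}\omega'(x^{2})$ to get $\delta'\omega'(\delta')\asymp\delta\omega'(\delta)$ and $\omega(\delta')\asymp\omega(\delta)$ there, and disposes of the tail $\{\delta'\le\delta^{\beta}\}$ using $|E_t|=O(t^{\varepsilon})$ (via Cauchy--Schwarz with $\log^{4}(1/\delta')\in L^{1}$ rather than your layer-cake bound). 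The step you flag as the crux does go through --- the doubling of $x\omega'(x)$ plus positivity of $\omega$ gives $\omega(\delta)\ge\omega(\delta)-\omega(\delta^{2})\gtrsim\delta\omega'(\delta)\log(1/\delta)$, hence $\alpha(\delta)\log(1/\delta)=O(1)$ and then $\alpha(t)\asymp\alpha(\delta)$ on $[\delta^{R},\delta]$ --- so your route via $h(e^{-\eta})\asymp\eta^{2}$ is just a logarithmic reparametrization of the paper's bound via $\bigl(\omega(\delta)-\omega(\delta')\bigr)^{2}$.
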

 {\begin{proof} Let $\delta = d(\zeta, E)$ and $\delta' = d(\zeta', E)$.  
 By Lemma \ref{estimationgamma}, it is sufficient to prove the necessary condition.
Let  $\beta>0$ be a large constant and set 
$$\Gamma_1=\{\zeta' \in \TT\text { : } |\zeta -\zeta'| >\delta \text{ and } \delta^\beta <\delta' \leq \delta \} \quad \text{ and } \quad  \Gamma_2 :=\{\zeta' \in \TT\text { : } |\zeta -\zeta'| >\delta \text{ and }  \delta' \leq \delta^\beta  \}. $$ 
Let 
\[
\Delta(\delta,\delta') = \omega^2(\delta') - \omega^2(\delta) - 2\omega^2(\delta') \log \frac{\omega(\delta')}{\omega(\delta)}
\]

\noindent For $\delta^{\beta} < \delta' \leq \delta$, we have
$$
\delta^\beta\omega'(\delta^\beta)\asymp \delta\omega'(\delta) \quad  \text {and } \quad \omega(t) \asymp \omega(s), \quad \delta' \leq t \leq s\leq  \delta.
$$
Thus we obtain
\begin{eqnarray*}
\Delta(\delta,\delta') &=& \int_{t=\delta'}^{\delta} \int^{\delta}_{s=t} \frac{\omega'(t)}{\omega(t)}\omega'(s)\,\omega(s)\, ds \, dt\\
&\asymp& \int_{\delta'}^{\delta} \int_{\delta}^{t} \omega'(t)\,\omega'(s)\, ds\, dt\\
&=& \frac{1}{2} \bigl(\omega(\delta) - \omega(\delta')\bigr)^2 \\
&=&\frac{1}{2}\Big(\int_{\delta'}^\delta  x\omega'(x) \frac{dx}{x}\Big)^2\\
&\geq& (\delta'\omega'(\delta') )^2\log^2(\delta/\delta')\asymp  (\delta \omega'(\delta))^2 \log^2(\delta/\delta')
\end{eqnarray*}
Hence 
$$\cD_{\zeta,\Gamma_1}(f_{\omega,E}) = \int_{\Gamma_1}\frac{\Delta(\delta,\delta')}{|\zeta-\zeta'|^2}|\dd \zeta'| \geq \delta^2 (\omega'(\delta))^2 \int_{\Gamma_1}\frac{\log^2(\delta/\delta')}{|\zeta-\zeta'|^2}|\dd \zeta'|$$
This implies the following inequality
$$
\displaystyle \int _{\Gamma_1}\frac{ \log^2 (\delta /\delta ')}{|\zeta-\zeta'|^2}|\dd \zeta'| \lesssim \frac{1}{\delta }.
$$
On the other hand, since \( |E_t| = O(t^\varepsilon) \), $\log^4( 1 /\delta') \in L^1(\TT)$.  Hence for $\beta \varepsilon >4$ , we obtain
\begin{eqnarray*}
 \int_{\Gamma_2}
\frac{\log^2( \delta /\delta')}{|\zeta-\zeta'|^2} \, |\dd\zeta' |& \lesssim & \frac{1}{\delta^2} \int_{\Gamma_2}
\log^2( 1 /\delta')\, |\dd\zeta' |\\
& \lesssim &   \frac{1}{\delta^2}  \left ( \int_{\Gamma_2} |\dd\zeta' |\right )^{1/2}\left (\int_{\Gamma_2}
\log^4( 1 /\delta')\, |\dd\zeta' |\right )^{1/2} \\
&\lesssim  &  \frac{1}{\delta^2}|E_{\delta^\beta}|^{1/2}\left (\int_{\Gamma_2}
\log^4( 1 /\delta')\, |\dd\zeta' |\right )^{1/2}.\\
&\lesssim  & \delta^{\frac{\beta \varepsilon}{2}-2}\left (\int_{\TT}
\log^4( 1 /\delta')\, |\dd\zeta' |\right )^{1/2} \lesssim \frac{1}{\delta }.\\
\end{eqnarray*}
Therefore $E\in  \mathcal{L}_2$, which completes the proof.
\end{proof}}


To conclude this section, we examine a situation where the condition $E \in \mathcal{L}_2$ is not necessary satisfied.
We have the following result.

\begin{thm}\label{GammaCarleson}
Let \( \omega \) be an increasing function such that \( {\omega(t^\gamma)} \) is concave for some \( \gamma >2 \), and \( {t \omega'(t)}/{\omega(t)} \) is increasing. Let  $E\subset \T$ be  be a closed set  such that    $\dist(\cdot, E)^{-\sigma} \in L^1(\T)$ for some $\sigma\in (0,1)$. Then 
 $$
\cD _\zeta (f_{\omega,E}) \lesssim \dist(\zeta,E)\omega'(\dist(\zeta,E))^2\log^2(1/\dist(\zeta,E)) 
 $$
 { Conversely, there exists a set $E$ such that $\dist(\cdot, E)^{-\sigma}\in L^1(\T)$ for some $\sigma>0$ and 
 $$
\displaystyle \sup _{d(\zeta,E)\geq \delta}\cD _\zeta (f_{\omega,E}) \asymp \delta \omega'(\delta)^2\log ^2(1/\delta) , \qquad \delta \in (0,1).
 $$
}
 \end{thm}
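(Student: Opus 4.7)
The plan is to treat the direct inequality and the sharpness statement separately, making repeated use of the geometric decomposition $\T = I(\zeta,E)\cup\Sigma(\zeta,E)\cup\Gamma(\zeta,E)$ from Section~2.

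\emph{Upper bound.} Fix $\zeta\in\T\setminus E$ and write $\delta:=\dist(\zeta,E)$. Lemma~\ref{lemma1} already gives $\cD_{\zeta,I(\zeta,E)}(f_{\omega,E})+\cD_{\zeta,\Sigma(\zeta,E)}(f_{\omega,E})\lesssim \delta\omega'(\delta)^2$, which is absorbed in the target bound. By Lemma~\ref{lemma2} it then suffices to prove
$$
J(\zeta):=\int_{\Gamma(\zeta,E)}\frac{\log^2(\delta/\delta')}{|\zeta-\zeta'|^2}|\dd\zeta'|\lesssim \frac{\log^2(1/\delta)}{\delta}.
$$
The idea is to split $\Gamma(\zeta,E)$ according to how far $\delta'$ is from $\delta$. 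Set $\beta:=1/\sigma>1$ and let $\Gamma_1:=\{\zeta'\in\Gamma(\zeta,E):\delta'>\delta^{\beta}\}$, $\Gamma_2:=\Gamma(\zeta,E)\setminus\Gamma_1$. On $\Gamma_1$ one has $\log(\delta/\delta')\le (\beta-1)\log(1/\delta)$; combining this with the fact that every $\zeta'\in\Gamma(\zeta,E)$ satisfies $|\zeta-\zeta'|\ge\delta$ together with $\int_{|\zeta-\zeta'|\ge\delta}|\zeta-\zeta'|^{-2}|\dd\zeta'|\lesssim 1/\delta$ yields the desired bound on $\Gamma_1$. On $\Gamma_2\subset E_{\delta^{\beta}}$, Chebyshev applied to $\dist(\cdot,E)^{-\sigma}\in L^1(\T)$ gives $|E_t|\lesssim t^{\sigma}$, and a layer-cake integration yields $\int_{E_{\delta^{\beta}}}\log^2(1/\delta')|\dd\zeta'|\lesssim \delta^{\beta\sigma}\log^2(1/\delta)=\delta\log^2(1/\delta)$; the inequality $|\zeta-\zeta'|\ge\delta$ then controls $\int_{\Gamma_2}$ with the same order.

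\emph{Sharpness.} I will take $E$ to be the non-symmetric accumulation set $F=\{e^{-i n^{-\gamma_0}}:n\ge 1\}\cup\{1\}$ discussed in Section~\ref{remarks}, with $\gamma_0$ chosen large enough for $\dist(\cdot,F)^{-\sigma}\in L^1(\T)$ to hold for some $\sigma\in(0,1)$; a direct gap-by-gap computation shows this occurs whenever $(\gamma_0+1)(1-\sigma)>1$. The asymmetry of $F$ is essential: as indicated in Section~\ref{remarks}, along a sequence of points $\zeta_N\in\T\setminus F$ approaching the accumulation point $1$ with $\delta_N:=\dist(\zeta_N,F)\to 0$, the geometric integral saturates,
$$
\int_{\Gamma(\zeta_N,F)}\frac{\log^2(\delta_N/\delta')}{|\zeta_N-\zeta'|^2}|\dd\zeta'|\asymp \frac{\log^2(1/\delta_N)}{\delta_N}.
$$
To convert this into a lower bound for $\cD_{\zeta_N}(f_{\omega,F})$, I will prove a lower-bound analog of Lemma~\ref{lemma2}. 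Starting from the Richter--Sundberg representation \eqref{eqforme} and restricting the inner $t$- and $s$-integrations to dyadic sub-ranges on which $\omega$ is essentially constant, the regularity of $\omega$ (concavity of $\omega(t^{\gamma})$ for $\gamma>2$ and monotonicity of $t\omega'(t)/\omega(t)$) ensures that $\omega(s)/\omega(t)$ is comparable to a power of $s/t$, letting one factor out $\delta^2\omega'(\delta)^2$ without losing the $\log^2$ structure. The resulting inequality
$$
\cD_{\zeta_N,\Gamma}(f_{\omega,F})\gtrsim \delta_N^{2}\omega'(\delta_N)^2\int_{\Gamma(\zeta_N,F)}\frac{\log^2(\delta_N/\delta')}{|\zeta_N-\zeta'|^2}|\dd\zeta'|
$$
establishes the converse on the sequence $(\delta_N)$.

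\emph{Principal difficulty.} The upper bound is a routine two-scale splitting. The delicate step is the sharpness statement, which requires two matching lower bounds: a geometric one (the Cauchy-kernel integral over $\Gamma$ against $\log^2(\delta/\delta')$ really attains the order $\log^2(1/\delta)/\delta$ for the set $F$, and not a smaller power of $\log$), and an analytic one (the Richter--Sundberg integrand retains the full $\log^2$ factor after all integrations). Neither can afford the sup-bound $\int_t^\delta s\omega'(s)\omega(s)\dd s/s\le\delta\omega'(\delta)\omega(\delta)\log(\delta/t)$ used loosely in Lemma~\ref{lemma2}; instead one must work directly with the double integral in \eqref{eqforme}, exchanging the order of integration and controlling the inner factor $\log(\omega(\delta)/\omega(\delta'))$ via the regularity of $\omega$.
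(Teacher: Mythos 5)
Your upper bound is correct, but it takes a different route from the paper. The paper reduces to the single estimate $\int_{|\zeta-\zeta'|\ge\delta}\log^2(\delta/\delta')\,|\zeta-\zeta'|^{-2}|\dd\zeta'|\lesssim\delta^{-1}\log^2(1/\delta)$ and proves it by H\"older's inequality with the $\delta$-dependent exponent $q=\log(1/\delta)$ together with $\log x\lesssim x^{\varepsilon}/\varepsilon$, $\varepsilon=\sigma/2q$; the two factors then contribute $\delta^{-1/q}\asymp 1$ and $1/\varepsilon^2\asymp\log^2(1/\delta)$. Your two-scale splitting at $\delta'=\delta^{1/\sigma}$, with the Chebyshev bound $|E_t|\lesssim t^{\sigma}$ and a layer-cake estimate on $E_{\delta^{1/\sigma}}$, reaches the same conclusion by more elementary means and makes the geometric origin of the two logarithms more transparent; the paper's argument is shorter once one accepts the exponent trick. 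Either is acceptable.

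The sharpness part, however, contains a genuine gap. Your plan rests on the pointwise lower bound $\Delta(\delta,\delta')\gtrsim\delta^2\omega'(\delta)^2\log^2(\delta/\delta')$, which you claim follows from the stated regularity of $\omega$ (concavity of $\omega(t^{\gamma})$ and monotonicity of $t\omega'(t)/\omega(t)$). It does not: take $\omega(t)=t^{\alpha}$ with $\alpha\in(0,1/2)$, which satisfies every hypothesis of the theorem. Then
\[
\Delta(\delta,\delta')=\delta'^{2\alpha}-\delta^{2\alpha}+2\alpha\,\delta^{2\alpha}\log(\delta/\delta')\le 2\alpha\,\delta^{2\alpha}\log(\delta/\delta'),
\]
i.e.\ only \emph{one} power of the logarithm survives (this is exactly Lemma~\ref{lemma2bis}), whereas your claimed bound would require $\delta^{2\alpha}\log^2(\delta/\delta')$. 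The discrepancy occurs precisely in the regime $\delta'\ll\delta$ from which the $\log^2$ mass of the geometric integral over $\Gamma(\zeta,F)$ is drawn, so the loss is not harmless. To recover the $\log^2$ lower bound one needs $\omega(\delta)-\omega(\delta')\gtrsim\delta\omega'(\delta)\log(\delta/\delta')$, which forces $x\omega'(x)$ to be essentially constant on $[\delta',\delta]$; this is the additional hypothesis $x\omega'(x)\asymp x^2\omega'(x^2)$ appearing in part (3) of Theorem~\ref{estimationlocal} and in Lemma~\ref{lem11}, and even there the bound is only obtained for $\delta'\ge\delta^{\beta}$. Your sketch silently assumes this extra regularity. (For what it is worth, the paper itself only gestures at the converse by citing the second example of Section~\ref{remarks}, so your choice of the one-sided set $F=\{e^{-in^{-\gamma_0}}\}\cup\{1\}$ and the condition $(\gamma_0+1)(1-\sigma)>1$ match the intended construction; the missing ingredient is the analytic lower bound on $\Delta$, which must either be restricted to weights with $x\omega'(x)\asymp x^2\omega'(x^2)$ or replaced by a different mechanism.)
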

\begin{proof}
Using the same notation as in the previous proof, and by Lemma \ref{lemma1} and Lemma \ref{lemma2}, It suffices to prove 
$$
 \int_{\dist(\zeta, \zeta') \geq \delta} \frac{\log^2 ({\delta}/{\delta'})}{\dist(\zeta, \zeta') ^2} |\dd\zeta' | \lesssim \frac{1}{\delta}\log ^2(1/\delta).
$$
Let $p,q, \varepsilon $ such that $$\frac{1}{p}+\frac{1}{q}=1, \quad q=\log(1/\delta),\quad \text{ and}\quad \varepsilon =\frac{\sigma}{2q}.$$
By using Hölder’s inequality,  together with the inequality $\log x\lesssim \frac{x^\varepsilon}{\varepsilon}$,  we obtain
{ \begin{align*}\label{estgcarleson}
 \int_{\dist(\zeta, \zeta') \geq \delta} \frac{\log^2 ({\delta}/{\delta'})}{\dist(\zeta, \zeta') ^2} |\dd\zeta' |  
&\lesssim  \Big(\int_{\T} \Big(\log^2 \big( \frac{1}{\delta'}\big)\Big)^q |\dd\zeta' |\Big)^{1/q} \Big( \int_{\dist(\zeta, \zeta') \geq \delta} \frac{ |\dd\zeta' | }{\dist(\zeta, \zeta') ^{2p}}  \Big)^{1/p}\nonumber\\
&\lesssim \frac{1}{\varepsilon^2}  \Big(\int_{\T}  \frac{|\dd\zeta' |}{{\delta' }^{\sigma }}\Big)^{1/q} \Big(\frac{1}{(2p-1)\delta^{2p-1}}\Big)^{1/p}\nonumber\\
&\lesssim \frac{ 1}{\varepsilon^2 \delta}\\
&\lesssim \frac{ 1}{ \delta}\log^2(1/\delta).
\end{align*}
}
This completes the proof of the desired result.
\end{proof}

\subsection*{Remark} The second example in Remarks \ref{remarks} shows the optimality of the condition; it is enough to take $(1+\gamma)(1-\sigma) > 1$.

\section{\bf {Membership  to $\cD(\mu)$}}
The first application of the local Dirichlet integral is the membership to $\cD (\mu)$. We have the following result.
\begin{thm}\label{appartdmu}
 Let $E$ be a Carleson set of $\T$ and let $\mu$ be a positive measure on $\TT$.
 \begin{enumerate}
\item Let $\alpha \in (0,1/2)$ and suppose that $E\in \mathcal{L}_1$, then
\begin{equation*}
f_{\alpha,E} \in \cD(\mu) \iff \displaystyle \int _\TT \dist(\zeta,E)^ {2\alpha -1}\dd \mu (\zeta)<\infty.
\end{equation*}
\item Let \( \omega \) be an increasing function such that \( {\omega(t^\gamma)} \) is concave for some \( \gamma >2 \), and \( {t \omega'(t)}/{\omega(t)} \) is increasing. Suppose that $E \in   \mathcal{L}_2 $, then  
\begin{equation*}
f_{\omega,E} \in \cD(\mu) \iff \displaystyle \int _\TT \dist(\zeta,E)\omega'(\dist(\zeta,E))^2\dd \mu (\zeta)<\infty.
\end{equation*}
\end{enumerate}
\end{thm}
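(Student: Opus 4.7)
The plan is to reduce the statement to integration of the pointwise estimates already proved in Theorem \ref{estimationlocal}. By definition, $f \in \mathcal{D}(\mu)$ means $f \in H^2$ and
\[
\mathcal{D}_\mu(f) = \int_\T \mathcal{D}_\zeta(f)\, d\mu(\zeta) < \infty.
\]
Since $E$ has Lebesgue measure zero and $\omega$ is continuous on $(0,\pi]$, the boundary values $|f_{\omega,E}^*(\zeta)| = \omega(\dist(\zeta,E))$ (and in particular $|f_{\alpha,E}^*(\zeta)| = \dist(\zeta,E)^\alpha$) are bounded a.e. on $\T$, so $f_{\omega,E}, f_{\alpha,E} \in H^\infty \subset H^2$. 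Thus the membership question reduces entirely to the finiteness of $\mathcal{D}_\mu(f)$.

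For assertion (1), assume $E \in \mathcal{L}_1$ and $\alpha \in (0,1/2)$. By Theorem \ref{estimationlocal}(1) (equivalently Lemma \ref{lem9}) we have the pointwise equivalence
\[
\mathcal{D}_\zeta(f_{\alpha,E}) \asymp \dist(\zeta,E)^{2\alpha - 1}, \qquad \zeta \in \T \setminus E,
\]
with constants independent of $\zeta$. Integrating against $\mu$ immediately yields
\[
\mathcal{D}_\mu(f_{\alpha,E}) \asymp \int_\T \dist(\zeta,E)^{2\alpha - 1} \, d\mu(\zeta),
\]
and the two-sided bound gives the stated equivalence.

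For assertion (2), assume the hypotheses on $\omega$ and $E \in \mathcal{L}_2$. Theorem \ref{estimationlocal}(2) (proved as Lemma \ref{estimationgamma}) gives
\[
\mathcal{D}_\zeta(f_{\omega,E}) \asymp \dist(\zeta,E)\, \omega'(\dist(\zeta,E))^2, \qquad \zeta \in \T \setminus E,
\]
with implied constants depending only on $\gamma$. Integration against $\mu$ yields
\[
\mathcal{D}_\mu(f_{\omega,E}) \asymp \int_\T \dist(\zeta,E)\, \omega'(\dist(\zeta,E))^2 \, d\mu(\zeta),
\]
from which the equivalence in (2) follows. There is no substantive obstacle beyond invoking the pointwise two-sided bounds of Theorem \ref{estimationlocal}; the only minor point is the preliminary verification that $f_{\omega,E} \in H^2$, which is automatic from the boundedness of $\omega(\dist(\cdot,E))$.
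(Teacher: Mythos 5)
Your proposal is correct and follows exactly the paper's route: the paper's entire proof is the one-line observation that the theorem is a direct consequence of the two-sided pointwise estimates of Theorem \ref{estimationlocal}, which you integrate against $\mu$. The only addition is your (harmless and correct) preliminary remark that $f_{\omega,E}\in H^\infty\subset H^2$ because the increasing $\omega$ is bounded on $(0,\pi]$.
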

\begin{proof}
It is a direct consequence of Theorem \ref{estimationlocal}.
\end{proof}
It is worth noting that, in the classical Dirichlet space $\cD$, the same result was proved in \cite{EKR2} without the additional $\mathcal{L}$ condition. In the following theorem we give a class of measures for which the $\mathcal{L}$ condition can be removed.
\begin{thm}
Let \( \mu \) be the measure defined by \( \dd\mu(\zeta) = \dist(\zeta, E)^\alpha |\dd\zeta| \) with \( \alpha \geq 0 \), and let \( E \subset \mathbb{T} \) be a Carleson set. Assume that $\omega(t^\gamma)$ 
is concave for some \( \gamma > {2}/{(2 - \alpha)} \) and $t\omega'(t)/\omega(t)$ is is increasing. Then, we have 
\[
 \mathcal{D}_\mu(f_{\omega, E}) \asymp \int_\mathbb{T} \dist(\zeta, E)\, \omega'(\dist(\zeta, E))^2 \, \dd\mu(\zeta).
\]
\end{thm}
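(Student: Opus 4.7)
The lower bound is immediate from the pointwise inequality~\eqref{inD} of Lemma~\ref{lemma1}: one has $\cD_\zeta(f_{\omega,E}) \gtrsim \dist(\zeta,E)\,\omega'(\dist(\zeta,E))^2$ everywhere on $\T$, and integrating against $d\mu$ gives the $\gtrsim$ half of the claimed equivalence.

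For the upper bound, the plan is to split the inner integral in the Richter--Sundberg representation
\[
\cD_\mu(f_{\omega,E})=\int_\T d\mu(\zeta)\,\frac{1}{2\pi}\int_\T\frac{\Delta(\delta,\delta')}{|\zeta-\zeta'|^2}\,|d\zeta'|
\]
according to the decomposition $\T=I(\zeta,E)\cup\Sigma(\zeta,E)\cup\Gamma(\zeta,E)$. By the pointwise bounds~\eqref{inI} and~\eqref{inS} of Lemma~\ref{lemma1}, the contributions from $I(\zeta,E)$ and $\Sigma(\zeta,E)$ are already controlled by $\delta\omega'(\delta)^2$ pointwise, so after integrating against $d\mu$ they give a term $\lesssim \int_\T \delta\omega'(\delta)^2\,d\mu$, which is the desired bound. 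Only the $\Gamma(\zeta,E)$ contribution
\[
J_\Gamma:=\int_\T\cD_{\zeta,\Gamma(\zeta,E)}(f_{\omega,E})\,d\mu(\zeta)
\]
is left: this is precisely the piece where Lemma~\ref{estimationgamma} invoked the $\mathcal{L}_2$-condition. Since $E$ here is only Carleson, the plan is to treat $J_\Gamma$ globally via Fubini, exploiting the weight $\delta^\alpha$ supplied by $d\mu$ to compensate.

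Applying Fubini and using that $\zeta'\in\Gamma(\zeta,E)$ is equivalent, up to a null set, to $\zeta\in\Sigma(\zeta',E)$, one obtains
\[
J_\Gamma = \int_\T |d\zeta'|\int_{\Sigma(\zeta',E)} \frac{\Delta(\delta,\delta')\,\delta^\alpha}{|\zeta-\zeta'|^2}\,|d\zeta|,
\]
with $\delta'\le\delta\lesssim|\zeta-\zeta'|$ on the inner region. Combining Lemma~\ref{lemma2}'s bound $\Delta(\delta,\delta')\lesssim \delta^2\omega'(\delta)^2\log^2(\delta/\delta')$ with the sub-power estimate $\delta\omega'(\delta)\lesssim(\delta/\delta')^{1/\gamma}\,\delta'\omega'(\delta')$ --- a consequence of the concavity of $\omega(t^\gamma)$, equivalently that $u\mapsto u^{1-1/\gamma}\omega'(u)$ is decreasing --- and integrating in polar coordinates around $\zeta'$, the inner integral reduces to an expression of the form
\[
\delta'^{\,2-2/\gamma}\omega'(\delta')^2\int_{\delta'}^{\pi}\frac{1+\log^2(r/\delta')}{r^{\,2-\alpha-2/\gamma}}\,dr.
\]
The condition $\gamma>2/(2-\alpha)$, equivalently $2-\alpha-2/\gamma>0$, is exactly what keeps the radial integral of order $\delta'^{\alpha+2/\gamma-1}$ (modulo polylogarithmic corrections that are absorbed), yielding $J_\Gamma\lesssim\int_\T\delta'^{1+\alpha}\omega'(\delta')^2\,|d\zeta'|=\int_\T\delta\omega'(\delta)^2\,d\mu$ and completing the proof.

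The hard part is to carry out the last radial estimate sharply enough to match the stated threshold $\gamma>2/(2-\alpha)$, rather than the cruder $\gamma>2/(1-\alpha)$ that a naive dyadic decomposition based solely on Lemma~\ref{lemma2} produces. This requires a delicate interplay between the power weight $\delta^\alpha$ from $d\mu$, the logarithmic factor in Lemma~\ref{lemma2}, and the sub-power behaviour of $\omega$. In particular, in the regime $\delta'\simeq\delta$ --- where Lemma~\ref{lemma2}'s $\log^2$ bound is wasteful --- one expects to use the finer quadratic bound $\Delta(\delta,\delta')\lesssim \omega'(\delta')^2(\delta-\delta')^2(\delta/\delta')^{2/\gamma}$, coming from the Taylor expansion $\psi(R)\lesssim(R-1)^2$ near $R=1$ together with concavity of $\omega$, in order to absorb the remaining logarithmic slack against the $\delta^\alpha$-weight.
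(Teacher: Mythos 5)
Your overall strategy coincides with the paper's: the lower bound from the pointwise estimate \eqref{inD}, pointwise control of the $I(\zeta,E)$ and $\Sigma(\zeta,E)$ contributions via Lemma~\ref{lemma1}, and, for the $\Gamma(\zeta,E)$ piece, a Fubini argument combining the $\log^2$ bound of Lemma~\ref{lemma2} with the sub-power inequality $\delta^{1-1/\gamma}\omega'(\delta)\le\delta'^{1-1/\gamma}\omega'(\delta')$ coming from the concavity of $\omega(t^\gamma)$. (The paper additionally splits $\Gamma(\zeta,E)$ into $\{\delta'<\delta/4\}$ and $\{\delta/4\le\delta'\le\delta\}$; on the second piece the logarithm is bounded and the crude bound $\int_\T\delta^{2+\alpha}\omega'(\delta)^2\int_{|\zeta-\zeta'|\ge\delta}|\zeta-\zeta'|^{-2}|\dd\zeta'|\,|\dd\zeta|\lesssim\int_\T\delta^{1+\alpha}\omega'(\delta)^2|\dd\zeta|$ is immediate, so the substance of the two arguments is the same.)

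The gap is in your evaluation of the radial integral. Setting $\beta:=2-\alpha-2/\gamma$, you need $\int_{\delta'}^{\pi}r^{-\beta}\log^2(r/\delta')\,\dd r\asymp\delta'^{\,1-\beta}$, and this holds if and only if $\beta>1$, i.e.\ $\gamma>2/(1-\alpha)$. The condition $\beta>0$ (i.e.\ $\gamma>2/(2-\alpha)$) only makes the integrand integrable; for $\beta\le1$ the integral is dominated by the upper endpoint and is $O(\log^2(1/\delta'))$ or $O(1)$ rather than $\delta'^{\,1-\beta}$ --- the ``polylogarithmic corrections'' are not absorbed, they change the order of magnitude. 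Your closing paragraph acknowledges that the naive estimate only reaches $\gamma>2/(1-\alpha)$, but the repair you sketch targets the wrong regime: near the diagonal $\delta'\simeq\delta$ the $\log^2$ factor is already $O(1)$ and the estimate is trivial for every $\alpha\ge 0$, as noted above. The genuine obstruction sits in the far regime $\delta'\ll\delta\lesssim|\zeta-\zeta'|$, where after Fubini the kernel $|\zeta-\zeta'|^{-\beta}\log^2(|\zeta-\zeta'|/\delta')$ must be integrated out to scale $1$, and no refinement of $\Delta(\delta,\delta')$ for $\delta'$ close to $\delta$ can influence that. As written, your argument therefore proves the theorem only under $\gamma>2/(1-\alpha)$; for what it is worth, the paper's own displayed computation of the term $I_1$ relies on exactly the same radial estimate and likewise requires $2-2/\gamma-\alpha>1$, so neither argument actually reaches the stated threshold $\gamma>2/(2-\alpha)$.
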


\begin{proof}
Let \( \zeta \in \mathbb{T} \setminus E \), and set \( \delta = \dist(\zeta, E) \), \( \delta' = \dist(\zeta', E) \). By Lemmas~\ref{lemma1} and Lemma~\ref{lemma2}, it suffices to prove that
\[
I = \int_\mathbb{T} \delta ^2\, \omega'(\delta)^2 \left( \int_{\zeta' \in \Gamma(\zeta, E)} \frac{\log^2(\delta / \delta')}{|\zeta - \zeta'|^2} \, |\dd\zeta'| \right) \dd\mu(\zeta)
\lesssim \int_\mathbb{T} \delta \, \omega'(\delta)^2 \, \dd\mu(\zeta).
\]
We decompose $
\Gamma(\zeta, E) = \Gamma_1(\zeta, E) \cup \Gamma_2(\zeta, E)$, 
where
\[
\Gamma_1(\zeta, E) := \left\{ \zeta' \in \mathbb{T} \setminus I(\zeta, E) : \delta' < \frac{\delta}{4} \right\}
\quad \text{and} \quad
\Gamma_2(\zeta, E) := \Gamma(\zeta, E) \setminus \Gamma_1(\zeta, E).
\]
Correspondingly, we write \( I = I_1 + I_2 \), where
\begin{eqnarray*}
I_j&=& \int_\mathbb{T} \delta ^2\, \omega'(\delta)^2 \left( \int_{\zeta' \in \Gamma _j(\zeta, E)} \frac{\log^2(\delta / \delta')}{|\zeta - \zeta'|^2} \, |\dd\zeta'| \right) \dd\mu(\zeta)\\
&=&  \int_\mathbb{T} \delta ^{2+\alpha}\, \omega'(\delta)^2  \int_{\zeta' \in \Gamma _j(\zeta, E)} \frac{\log^2(\delta / \delta')}{|\zeta - \zeta'|^2} \, |\dd\zeta'| |\dd \zeta|,, \quad j=1,2.
\end{eqnarray*}
We have
\begin{align*}
I_1 &\lesssim \int_{\mathbb{T}} \int_{\Gamma_1(\zeta, E)} \delta'^{2(1 - \tfrac{1}{\gamma})} \omega'(\delta')^2 \, \delta^{\frac{2}{\gamma} + \alpha} \frac{\log^2(\delta / \delta')}{|\zeta - \zeta'|^2} \, |\dd\zeta'| \, |\dd\zeta| \\
&\lesssim \int_{\mathbb{T}} \int_{\Gamma_1(\zeta, E)} \delta'^{2(1 - \tfrac{1}{\gamma})} \omega'(\delta')^2 (\delta - \delta')^{\frac{2}{\gamma} + \alpha} \frac{\log^2\left((\delta - \delta') / \delta'\right)}{|\zeta - \zeta'|^2} \, |\dd\zeta'| \, |\dd\zeta| \\
&\lesssim \int_{\mathbb{T}} \delta'^{2(1 - \tfrac{1}{\gamma})} \omega'(\delta')^2 \int_{|\zeta - \zeta'| > \delta'} \frac{\log^2(|\zeta - \zeta'| / \delta')}{|\zeta - \zeta'|^{2 - \tfrac{2}{\gamma} - \alpha}} \, |\dd\zeta| \, |\dd\zeta'| \\
&\lesssim \int_{\mathbb{T}} \delta'^{2(1 - \tfrac{1}{\gamma})} \omega'(\delta')^2 \int_{t \geq \delta'} \frac{\log^2(t / \delta')}{t^{2 - \tfrac{2}{\gamma} - \alpha}} \, dt \, |\dd\zeta'| \\
&\asymp \int_{\mathbb{T}} \delta'^{1+\alpha} \omega'(\delta')^2 \, |\dd\zeta'|.
\end{align*}
For \( \zeta' \in \Gamma_2(\zeta, E) \), we have $\delta/4\leq \delta '\leq \delta$ and $|\zeta - \zeta'| \geq \delta$. Then

\begin{align*}
I_2 &\lesssim \int_{\mathbb{T}} \delta^{2 + \alpha} \omega'(\delta)^2 \int_{|\zeta - \zeta'| \geq \delta} \frac{|\dd\zeta'|}{|\zeta - \zeta'|^2} \\
&\lesssim \int_{\mathbb{T}} \delta^{1 + \alpha} \omega'(\delta)^2 \, |\dd\zeta|.
\end{align*}
This completes the proof.
\end{proof}


\section[Multipliers]{\bf{Multipliers in the Classical Dirichlet space}}

\subsection{Multipliers from the Local Dirichlet Integral}

We start with the following lemma, relating Carleson measures on the circle to multipliers.
\begin{lem}\label{Dirichlet-local}
Let $f \in \mathcal{D} \cap H^\infty$. The following assertions are equivalent :
\begin{enumerate}
\item  $f$ is a multiplier for $\mathcal{D}$, 
\item The measure $|f'(z)|^2 dA(z)$  is a Carleson measure for $\mathcal{D}$,
\item The measure $\mathcal{D}_\zeta(f) |\dd\zeta|$ is a Carleson measure for $\mathcal{D}$.
\end{enumerate}
\end{lem}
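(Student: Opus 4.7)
The plan is to prove the chain $(2) \Leftrightarrow (1) \Leftrightarrow (3)$. The equivalence $(1)\Leftrightarrow(2)$ is the classical characterization of multipliers for $\cD$, while $(1)\Leftrightarrow(3)$ follows from a pointwise two-sided estimate of $\cD_\zeta(fg)$ combined with the global Douglas identity.

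For $(1)\Leftrightarrow(2)$, I would start from $\|fg\|_{\cD}^2 = \|fg\|_{H^2}^2 + \int_\D |(fg)'(z)|^2\, dA(z)$. Since $f \in H^\infty$, the $H^2$ part is bounded by $\|f\|_\infty\,\|g\|_{H^2}$. The Leibniz rule $(fg)' = f'g + fg'$ together with $|a+b|^2 \leq 2|a|^2 + 2|b|^2$ gives
\[
\int_\D |(fg)'|^2\, dA \leq 2\int_\D |g|^2\,|f'|^2\, dA + 2\|f\|_\infty^2 \int_\D |g'|^2\, dA,
\]
while the reverse elementary inequality $|a+b|^2 \geq \tfrac12|a|^2 - |b|^2$ yields the matching lower bound for $\int_\D|g|^2|f'|^2\, dA$ in terms of $\int_\D|(fg)'|^2\, dA$ and $\int_\D|g'|^2\, dA$. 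Together, these translate the multiplier property of $f$ into the Carleson condition on $|f'(z)|^2\, dA(z)$ and conversely.

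For $(1)\Leftrightarrow(3)$, the key input is the algebraic identity
\[
f(\zeta)g(\zeta) - f(\xi)g(\xi) = g(\zeta)\bigl(f(\zeta)-f(\xi)\bigr) + f(\xi)\bigl(g(\zeta)-g(\xi)\bigr).
\]
Squaring, using $|f(\xi)| \leq \|f\|_\infty$, dividing by $|\zeta-\xi|^2$, and integrating in $\xi$ over $\T$, the upper bound $|a+b|^2 \leq 2|a|^2+2|b|^2$ gives
\[
\cD_\zeta(fg) \leq 2|g(\zeta)|^2\, \cD_\zeta(f) + 2\|f\|_\infty^2\, \cD_\zeta(g),
\]
while the lower bound $|a+b|^2 \geq \tfrac12|a|^2 - |b|^2$ gives
\[
|g(\zeta)|^2\, \cD_\zeta(f) \leq 2\cD_\zeta(fg) + 2\|f\|_\infty^2\, \cD_\zeta(g).
\]

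Finally, I integrate both inequalities over $\T$ against $|\dd\zeta|$ and apply the global Douglas identity $\int_\T \cD_\zeta(h)\,|\dd\zeta|/(2\pi) = \int_\D|h'(z)|^2\, dA(z) \leq \|h\|_{\cD}^2$. The upper pointwise inequality yields
\[
\int_\T \cD_\zeta(fg)\,|\dd\zeta| \lesssim \int_\T |g(\zeta)|^2\, \cD_\zeta(f)\,|\dd\zeta| + \|g\|_{\cD}^2,
\]
so (3) implies $\|fg\|_{\cD}\lesssim \|g\|_{\cD}$, that is (1); the lower pointwise inequality yields
\[
\int_\T |g(\zeta)|^2\,\cD_\zeta(f)\,|\dd\zeta| \lesssim \|fg\|_{\cD}^2 + \|g\|_{\cD}^2,
\]
so (1) forces $\cD_\zeta(f)\,|\dd\zeta|$ to be a Carleson measure for $\cD$, that is (3). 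The argument presents no serious obstacle; the only delicate point is retaining the factor $\|f\|_\infty$ (finite by hypothesis) at each step so that the terms involving $\cD_\zeta(g)$ remain absorbable into $\|g\|_{\cD}^2$ via the Douglas identity.
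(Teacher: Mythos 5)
Your proposal is correct and follows essentially the same route as the paper: the equivalence $(1)\Leftrightarrow(2)$ is the standard Leibniz-rule argument (which the paper simply cites as well known), and for $(1)\Leftrightarrow(3)$ the paper uses exactly your splitting $f(\zeta)g(\zeta)-f(\xi)g(\xi)=g(\zeta)(f(\zeta)-f(\xi))+f(\xi)(g(\zeta)-g(\xi))$ inside the Douglas double integral, absorbing the second term via $\|f\|_\infty^2\,\cD(g)$. The only cosmetic difference is that you organize the estimate pointwise in $\zeta$ before integrating, while the paper works with the double integral directly.
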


\begin{proof} $(1) \iff (2)$ is trivial and well known. Let us prove the equivalence between $(1)$ and $(3)$. By Douglas's formula, we have
\begin{align}\label{eqA}
\mathcal{D}(fg) &= \frac{1}{4\pi^2} \int_\mathbb{T} \int_\mathbb{T} \frac{|f(\zeta) g(\zeta) - f(\zeta') g(\zeta')|^2}{|\zeta - \zeta'|^2} |\dd\zeta| |\dd\zeta'| \nonumber \\
&= \frac{1}{4\pi^2} \int_\mathbb{T} \int_\mathbb{T} \frac{|g(\zeta)(f(\zeta) - f(\zeta')) + f(\zeta')(g(\zeta) - g(\zeta'))|^2}{|\zeta - \zeta'|^2} |\dd\zeta| |\dd\zeta'| \nonumber.
\end{align}
Since
\[
\int_\mathbb{T} \int_\mathbb{T} |f(\zeta)|^2 \frac{|g(\zeta) - g(\zeta')|^2}{|\zeta - \zeta'|^2} |\dd\zeta| |\dd\zeta'| \lesssim \|f\|_{\infty} \mathcal{D}(g),
\]
it follows that $\mathcal{D}(fg)$ is finite if and only if
\[
\int_\mathbb{T} \int_\mathbb{T} |g(\zeta)|^2 \frac{|f(\zeta) - f(\zeta')|^2}{|\zeta - \zeta'|^2} |\dd\zeta| |\dd\zeta'| < +\infty.
\]
Thus, $f$ is a multiplier if and only if $\mathcal{D}_\zeta(f) |\dd\zeta|$ is a Carleson measure for $\mathcal{D}$.
\end{proof}

By Theorem  \ref{estimationlocal} and Lemma \ref{Dirichlet-local},  we have the following lemma
 \begin{lem}\label{CarlesonMeasure1}  Let $0<\alpha <1/2$ and let $E\subset \T$, then
 \begin{enumerate}
 \item If $E\in \mathcal{L}_2$, then $f_{\omega,E}\in \mathcal{M}(\cD)$ if and only if 
 $  d\mu_{\omega,E}(\zeta):=\dist(\zeta,E)\omega'(\dist(\zeta,E))^2 |\dd \zeta| $  is a Carleson measure of $\cD$.
\item If $E\in \mathcal{L}_1$, then $f_{\alpha,E}\in \mathcal{M}(\cD)$  if and only if 
$ d\mu_{\alpha,E}(\zeta):=\dist(\zeta,E)^{2\alpha-1} |\dd \zeta| $ is a Carleson measure of  $\cD.$
\end{enumerate}
\end{lem}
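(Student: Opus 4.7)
The proof is essentially a direct combination of two results already established in the paper, so the plan is short. The key observation is that Lemma~\ref{Dirichlet-local} characterizes multipliers of $\mathcal{D}$ among bounded Dirichlet functions via the Carleson measure condition on $\mathcal{D}_\zeta(f)\,|d\zeta|$, while Theorem~\ref{estimationlocal} (through Lemma~\ref{lem9} and Lemma~\ref{estimationgamma}) provides the two-sided estimates
\[
\mathcal{D}_\zeta(f_{\alpha,E}) \asymp \dist(\zeta,E)^{2\alpha-1}, \qquad
\mathcal{D}_\zeta(f_{\omega,E}) \asymp \dist(\zeta,E)\,\omega'(\dist(\zeta,E))^2,
\]
under the hypotheses $E\in\mathcal{L}_1$ and $E\in\mathcal{L}_2$ respectively. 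Matching these estimates to the measures $\mu_{\alpha,E}$ and $\mu_{\omega,E}$ is immediate.

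The plan is as follows. First I would verify the standing hypotheses of Lemma~\ref{Dirichlet-local}, namely that $f_{\omega,E}$ and $f_{\alpha,E}$ lie in $\mathcal{D}\cap H^\infty$. Boundedness follows at once from the fact that $\omega$ is continuous on $(0,\pi]$ and $\dist(\cdot,E)$ is bounded by $\pi$; hence $|f_{\omega,E}^\ast|\leq \omega(\pi)$ a.e., and similarly $|f_{\alpha,E}^\ast|\leq \pi^\alpha$. Membership in $\mathcal{D}$ can be read off either side of the claimed equivalence: if $d\mu_{\omega,E}$ is Carleson then in particular $\mu_{\omega,E}(\mathbb{T})<\infty$ (take $g\equiv 1$), so by the pointwise estimate of Theorem~\ref{estimationlocal} we get $\int_\mathbb{T}\mathcal{D}_\zeta(f_{\omega,E})\,|d\zeta|<\infty$; conversely if $f_{\omega,E}\in\mathcal{M}(\mathcal{D})$ then $f_{\omega,E}=f_{\omega,E}\cdot 1\in\mathcal{D}$.

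Then, for part~(1), since $E\in\mathcal{L}_2$ the pointwise estimate of Theorem~\ref{estimationlocal} gives
\[
\mathcal{D}_\zeta(f_{\omega,E})\,|d\zeta|\asymp d\mu_{\omega,E}(\zeta),
\]
so one measure is Carleson for $\mathcal{D}$ if and only if the other is, and Lemma~\ref{Dirichlet-local} then yields $f_{\omega,E}\in\mathcal{M}(\mathcal{D})\iff d\mu_{\omega,E}$ is Carleson. Part~(2) is exactly the same argument, using instead the $\mathcal{L}_1$-estimate of Theorem~\ref{estimationlocal}(1) to obtain $\mathcal{D}_\zeta(f_{\alpha,E})\,|d\zeta|\asymp d\mu_{\alpha,E}(\zeta)$.

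There is essentially no obstacle in the proof: the real work has been absorbed into Theorem~\ref{estimationlocal} (which required the $\mathcal{L}_1$ and $\mathcal{L}_2$ conditions precisely to obtain the matching upper and lower pointwise bounds) and into Lemma~\ref{Dirichlet-local} (which uses Douglas' formula together with the trivial bound $|f(\zeta)|\leq\|f\|_\infty$ to absorb the cross term in $\mathcal{D}(fg)$). Once both are in hand, the statement is simply a rephrasing.
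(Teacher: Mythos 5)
Your proposal is correct and follows exactly the route the paper intends: the paper states this lemma with the one-line justification ``By Theorem \ref{estimationlocal} and Lemma \ref{Dirichlet-local}'', and your argument is precisely that combination, with the additional (welcome) care of checking that $f_{\omega,E},f_{\alpha,E}\in\mathcal{D}\cap H^\infty$ so that Lemma \ref{Dirichlet-local} applies.
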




Given an arc $I\subset \mathbb{T}$, let us write $|I|$ for its arc-length, and $S(I)$ for the associated Carleson box, defined by
\[S(I) := \left\lbrace re^{i\theta} : e^{i\theta} \in I, 1-\vert I\vert < r < 1 \right\rbrace.\]
Recall that the reproducing kernel of the
Dirichlet space $\cD$ is given by
$$k_w(z) = \frac{1}{\bar wz}
\log \frac{1}{1-  \bar wz}, \qquad z,w\in \D.$$

we will use the following  characterization of Carleson measure given by Arcozzi, Rochberg and
Sawyer \cite{ARS1} and

\begin{thm}\label{ARS}\cite{ARS1}
 Let $\mu$  be a finite Borel measure on $\overline{\D}$. Then $\mu$ is a Carleson measure for
$\cD$ if  and only if there exists a positif  constant $C$ such that
$$\int_{\overline{S(I)}} \int_{\overline{S(I)}} \re \;k_w(z)\dd\mu(w)\dd\mu(z) \leq C\mu( \overline{S(I)}).$$
\end{thm}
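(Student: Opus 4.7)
The plan is to prove the two directions of this characterization separately. For the necessity direction (Carleson $\Rightarrow$ one-box testing), I would run the standard ``test on reproducing kernels'' argument. Fix an arc $I\subset\mathbb{T}$ and consider the superposition
\[
g_I(z) := \int_{\overline{S(I)}} k_w(z)\,\dd\mu(w).
\]
The reproducing property gives $\|g_I\|_{\mathcal{D}}^2 = \int_{\overline{S(I)}}\int_{\overline{S(I)}} k_w(\zeta)\,\dd\mu(w)\,\dd\mu(\zeta)$, which is real and positive; meanwhile, the Carleson embedding applied to $g_I$ yields $\int_{\overline{S(I)}}|g_I|^2\,\dd\mu \leq C\|g_I\|_{\mathcal{D}}^2$. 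On the other side, Cauchy--Schwarz on $\overline{S(I)}$ gives
\[
\int_{\overline{S(I)}}|g_I|^2\,\dd\mu \;\geq\; \frac{\bigl|\operatorname{Re}\int_{\overline{S(I)}} g_I\,\dd\mu\bigr|^2}{\mu(\overline{S(I)})},
\]
and $\operatorname{Re}\int_{\overline{S(I)}} g_I\,\dd\mu$ is exactly the double integral in the testing condition. Combining these two inequalities yields the one-box bound.

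For the sufficiency direction I would transfer the problem to a dyadic tree model of the Dirichlet space. The scheme is: fix a dyadic grid on $\mathbb{T}$ and identify each dyadic arc $I_\alpha$ with a vertex $\alpha$ of a binary tree $T$ and a distinguished ``Whitney'' point $z_\alpha\in S(I_\alpha)$. One then verifies the equivalences
\[
\|f\|_{\mathcal{D}}^2 \;\asymp\; |f(0)|^2+\sum_{\alpha\in T}\bigl|f(z_\alpha)-f(z_{\alpha^-})\bigr|^2,
\qquad \operatorname{Re} k_w(z)\;\asymp\;\#\{\alpha:\, z,w\in S(I_\alpha)\},
\]
so that on the tree the Carleson embedding becomes the weighted inequality
\[
\sum_{\alpha\in T}|F(\alpha)|^2\,\tilde\mu(\alpha) \;\leq\; C\sum_{\alpha\in T}|F(\alpha)-F(\alpha^-)|^2,
\]
where $\tilde\mu(\alpha)=\mu(\overline{S(I_\alpha)})$, and the box testing condition becomes the same inequality tested on functions of the form $F=\chi_{T(\alpha_0)}$ (the indicator of the rooted subtree at $\alpha_0$).

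The main obstacle is precisely this last step on the tree: showing that the simple ``test on indicators of rooted subtrees'' implies the full weighted inequality. I would prove it by dualizing and exhibiting the adjoint as a discrete potential operator $I^*\nu(\alpha)=\sum_{\beta\geq\alpha}\nu(\beta)$, for which the two-weight bound $\|I^*\nu\|_{\ell^2(\tilde\mu)}\lesssim \|\nu\|_{\ell^2(1/\tilde\mu)}$ is what is needed. A stopping-time construction on the tree (selecting maximal vertices where $I^*\nu$ doubles) combined with the one-box test applied at each stopping vertex then closes the estimate, giving the Carleson embedding with the same constant up to absolute factors. The technical delicate point is ensuring the stopping family is $\tilde\mu$-Carleson so that the sum over stopping vertices collapses into $\sum\tilde\mu(\alpha)\lesssim\mu(\overline{S(I)})$; once that is in place, undoing the tree identification recovers the statement on $\overline{\mathbb{D}}$.
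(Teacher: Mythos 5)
The paper does not prove this theorem: it is imported verbatim from Arcozzi--Rochberg--Sawyer \cite{ARS1}, so there is no internal proof to compare yours against. Judged on its own terms, your outline reproduces the ARS strategy. The necessity half is complete and correct: testing the embedding on $g_I=\int_{\overline{S(I)}}k_w\,\dd\mu(w)$, using that $\|g_I\|_{\cD}^2=\int\int k_w(\zeta)\,\dd\mu(w)\,\dd\mu(\zeta)$ is real and nonnegative (hence equals the double integral of $\re k_w$) and then Cauchy--Schwarz, yields the box condition. The only detail you elide is the justification that $g_I\in\cD$, which follows from the Carleson hypothesis together with the necessary decay $\mu(\overline{S(I)})\lesssim 1/\log(1/|I|)$; this is routine.

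The sufficiency half, however, is a roadmap rather than a proof, and the two places where all the work lives are exactly the ones you pass over. First, the comparison $\re k_w(z)\asymp\#\{\alpha:\,z,w\in S(I_\alpha)\}$ fails for a single dyadic grid: two points with $|1-\bar w z|$ small can lie on opposite sides of a dyadic boundary at every generation and share essentially no common ancestors, so the lower bound is false pointwise. One must average over translated grids (or use a fixed finite family of shifted grids and sum the estimates); without this the reduction to one tree does not control the full quadratic form. Second, the implication ``testing on indicators of rooted subtrees implies the two-weight inequality for the Hardy operator on the tree'' is the main theorem of \cite{ARS1}; asserting that ``a stopping-time construction combined with the one-box test closes the estimate'' names the strategy but does not carry it out --- in particular, the verification that the stopping vertices form a $\tilde\mu$-Carleson family is precisely where the hypothesis is consumed, and it is not a one-line maximality argument. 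Finally, the discrete norm equivalence $\|f\|_{\cD}^2\asymp|f(0)|^2+\sum_{\alpha}|f(z_\alpha)-f(z_{\alpha^-})|^2$ is not literally valid for arbitrary distinguished points; the correct statement involves oscillations or derivatives over Whitney boxes plus a transference argument. None of this indicates a wrong approach --- these are the contents of the cited paper --- but as written the sufficiency direction is not yet a proof.
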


\subsubsection*{Proof of Theorems \ref{CarlesonMeasure3}} and   \ref{CarlesonMeasure2}}. Here we consider the general setting and  the special case $\omega(t) = t^\alpha$.  By Lemma \ref{CarlesonMeasure1}  it suffices to prove that the measure 
$$d\mu_{\omega,E}(\zeta):=\dist(\zeta,E) \omega'(\dist(\zeta,E))^2 |\dd \zeta|$$  
  is a Carleson measure for $\cD$ .  By Theorem \ref{ARS}, the measure $\dd\mu_{\omega,E}(\zeta)$  is a Carleson measure for $\cD$ if and only if, for every closed arc $I$, we have 
\begin{equation}\label{equationcar}
 \int_{I} \int_{I} \log \frac{1}{|\zeta-\zeta'|}\dd\mu_{\omega,E}(\zeta) \dd\mu_{\omega,E}(\xi) \lesssim\mu_{\omega,E}(I).
 \end{equation}
then \eqref{equationcar} is satisfied, and   $\dd\mu_{\omega,E}$   is a Carleson measure, therefore $f_{\omega,E}\in \mathcal{M}(\cD)$.\\

Conversely, if the function  $f_{\omega,E}$ is a multiplier for $\cD$, then the measure $\dd\mu_{\omega,E}(\zeta) :=\dist(\zeta,E)\omega'(\dist(\zeta,E))^2 |\dd \zeta|$   is a Carleson measure, thus \eqref{equationcar} is satisfaite 
which completes the proof of Theorem  \ref{CarlesonMeasure2}.\\

To obtain a simpler, more practical condition, we recall 
a one-box characterization of Carleson measures for $\cD$, analogous to the classical characterization for Hardy spaces.

\begin{thm}\label{Onebox}\cite{EKR}
Let $\mu$ be a finite positive Borel measure on $\overline{\mathbb{D}}$ satisfying 
$$\mu(\overline{S(I)})=O(\phi(\vert I\vert)), \qquad (| I|\to 0),$$
where $\phi:\left]0,2\pi\right]\rightarrow \left]0,\infty\right]$ is an increasing function such that
$$\int_0^{2\pi}\dfrac{\phi(x)}{x}dx<\infty.$$
Then $\mu$ is a Carleson measure for $\mathcal{D}$.
\end{thm}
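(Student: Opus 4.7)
The strategy is to reduce the claim to the reproducing-kernel characterization of Arcozzi--Rochberg--Sawyer (Theorem~\ref{ARS}). Thus the goal is to show that for every arc $I\subset\TT$,
\[
J(I) := \int_{\overline{S(I)}}\int_{\overline{S(I)}} \re\, k_w(z)\, d\mu(z)\, d\mu(w) \lesssim \mu(\overline{S(I)}),
\]
with a constant independent of $I$. For $z,w\in\overline{S(I)}$ one has $|1-\bar w z|\lesssim |I|$ and $\re\, k_w(z)\asymp \log\bigl(1/|1-\bar w z|\bigr)$. I would begin by splitting
\[
\log\frac{1}{|1-\bar w z|} \;=\; \log\frac{1}{C|I|} \;+\; \log\frac{C|I|}{|1-\bar w z|},
\]
so that $J(I)=J_1(I)+J_2(I)$, where $J_2(I)\lesssim \log(1/|I|)\cdot \mu(\overline{S(I)})^{2}$ comes from the constant piece and $J_1(I)$ from the second.

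For the principal term $J_1(I)$, I would apply the layer-cake (distribution) formula
\[
J_1(I) \;=\; \int_0^{C|I|} (\mu\otimes\mu)\bigl(\{(z,w)\in \overline{S(I)}^{\,2}:\,|1-\bar w z|\le s\}\bigr)\,\frac{ds}{s},
\]
and invoke the geometric fact that, for fixed $z\in\overline{\D}$, the sublevel set $\{w\in\overline{\D}:|1-\bar w z|\le s\}$ is contained in a Carleson box of arc-length comparable to $s$. The standing hypothesis $\mu(\overline{S(J)})=O(\phi(|J|))$ then gives $\mu(\{w:|1-\bar w z|\le s\})\lesssim \phi(s)$, and Fubini yields
\[
J_1(I) \;\lesssim\; \mu(\overline{S(I)})\,\int_0^{C|I|}\frac{\phi(s)}{s}\,ds,
\]
which is bounded by a constant multiple of $\mu(\overline{S(I)})$ by the assumed Dini condition.

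To finish, the bound $J_2(I)\lesssim \mu(\overline{S(I)})$ reduces to the auxiliary claim $\phi(x)\log(1/x)=O(1)$ on $(0,2\pi]$. This follows from the Dini hypothesis: estimating $\int_0^{2\pi}\phi(s)/s\,ds$ by a dyadic Riemann sum gives $\sum_n\phi(2^{-n})<\infty$; since $\phi$ is increasing the terms $\phi(2^{-n})$ are decreasing in $n$, which forces $n\phi(2^{-n})\to 0$, and a simple interpolation between consecutive dyadic points then yields $\phi(x)\log(1/x)\to 0$ as $x\to 0^+$. Combined with $\mu(\overline{S(I)})\le C\phi(|I|)$ this gives
\[
J_2(I) \;\lesssim\; \phi(|I|)\log(1/|I|)\cdot\mu(\overline{S(I)}) \;\lesssim\; \mu(\overline{S(I)}).
\]

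The main obstacle is the geometric step used in the control of $J_1$: one must justify that $\{w\in\overline{\D}:|1-\bar w z|\le s\}$ lies in a Carleson box of arc-length $\asymp s$, a standard but essential use of pseudo-hyperbolic geometry on the disk. An alternative route would proceed via Stegenga's capacitary characterization of Carleson measures for $\cD$, where one would use the single-box hypothesis together with the Dini condition to compare $\mu$ of a finite union of Carleson boxes to the logarithmic capacity of the corresponding union of arcs; the Dini condition is again exactly what is needed to pass from the single-arc estimate to the required union estimate.
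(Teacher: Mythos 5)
The paper does not prove this statement: it is imported verbatim from the one-box paper of El-Fallah--Kellay--Mashreghi--Ransford (cited here as \cite{EKR}, though the result is that of \cite{Onebox}), so there is no in-paper argument to compare against. Your proposal is, in substance, a correct reconstruction of the standard proof of that result, and it follows the same route as the original source: verify the Arcozzi--Rochberg--Sawyer testing condition of Theorem~\ref{ARS}, split $\log\frac{1}{|1-\bar w z|}$ into the constant $\log\frac{1}{C|I|}$ plus a remainder, handle the remainder by the layer-cake formula together with the observation that $\{w:|1-\bar wz|\le s\}$ sits inside a box $\overline{S(J)}$ with $|J|\asymp s$ (which follows from $|1-\bar wz|\asymp (1-|z|)+(1-|w|)+|\arg z-\arg w|$ for $|z|,|w|\ge 1/2$), and handle the constant piece via $\phi(x)\log(1/x)=O(1)$; your derivation of the latter from the Dini condition (dyadic sum, monotonicity, $n\,\phi(2^{-n})\to 0$) is correct. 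Two routine points are left implicit and deserve a sentence each in a final write-up: first, $\re k_w(z)=\log\frac{1}{|1-\bar wz|}+O(1)$ only for $z,w$ in a box over a short arc, and since the hypothesis $\mu(\overline{S(I)})=O(\phi(|I|))$ is only asymptotic, one must reduce the testing condition for arbitrary arcs to arcs of length $\le\delta_0$ (cover the near-boundary part of $\overline{S(I)}$ by $O(1/\delta_0)$ boxes of side $\delta_0$, note that $\re k_w(z)$ is bounded when $|1-\bar wz|\ge\delta_0$, and use finiteness of $\mu$ for the interior part); second, in the bound for $J_1$ the integrand $\mu(\{w:|1-\bar wz|\le s\})$ should be replaced by $\min(\phi(C''s),\|\mu\|)$ so that the estimate is used only where the asymptotic hypothesis applies. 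Neither point affects the validity of the argument.
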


Let 
\[
    N_E(t) := 2 \sum_{j} 1_{\{|I_j| > 2t\}}
\] 
where $(I_j)$ denotes the family of connected components of $\mathbb{T} \setminus E$.   Note that for any measurable function   
$\Omega : (0, \pi] \to \mathbb{R}^+$, we have  
\[
    \int_{\mathbb{T}} \Omega(\operatorname{dist}(\zeta, E)) |d\zeta|  
    = \int_0^{\pi} \Omega(t) N_E(t) dt.
\]
\noindent We have $tN_E(t)\leq |E_t|$, where $E_t := \left\{\zeta \in \mathbb{T} : \dist(\zeta, E) \leq t \right\}$. Let $c(E)$  denote the logarithmic capacity of $E$, Recall that if  $E$ is  the Cantor set associated with the sequence $(\zeta_n)_{n \geqslant 0}$, then 
$$c(E)=0\iff \int_0^1 \frac{ds}{|E_s|}=+\infty\iff \sum_{n\geq 0} 2^{-n}\log(1/\pi (\zeta_1\ldots\zeta_n))=+\infty$$

 We have 
the following result
\begin{cor}\label{corcantor}
Consider a sequence \( (\zeta_n) \) with values in \( (0,1/2) \) such that  \( \limsup_n \zeta_n < 1/2 \),  and let $E = E_{(\zeta_n)}$ with positive   logarithmic capacity
then the following assertions are equivalent : 
\begin{enumerate}
\item $f_{\omega, E} \in \mathcal{D}$,
\item $f_{\omega, E} \in \mathcal{M}(\mathcal{D})$,
\item $\int_\T \dd(\zeta,E)\omega'(\dd(\zeta,E))^2 |\dd\zeta|<\infty$.
\end{enumerate}
\end{cor}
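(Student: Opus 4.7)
The implications $(2)\Rightarrow(1)$ and $(1)\Leftrightarrow(3)$ will essentially be harvested from results already in hand. Since $\limsup_n \zeta_n < 1/2$, Proposition~\ref{kholder} gives $E\in\mathcal{K}$, and then Lemma~\ref{gkset}(1) yields $E\in\mathcal{L}_2$. Applying Theorem~\ref{appartdmu}(2) with $d\mu=|d\zeta|$ (so that $\mathcal{D}(\mu)=\mathcal{D}$) produces the equivalence $(1)\Leftrightarrow(3)$. The implication $(2)\Rightarrow(1)$ follows from the trivial inclusion $\mathcal{M}(\mathcal{D})\subset\mathcal{D}$.

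The substance of the corollary is therefore the implication $(3)\Rightarrow(2)$. Since $E\in\mathcal{L}_2$, Theorem~\ref{CarlesonMeasure2} reduces the task to proving that the positive Borel measure on $\mathbb{T}$
\[
d\mu_{\omega,E}(\zeta):=\dist(\zeta,E)\,\omega'(\dist(\zeta,E))^2\,|d\zeta|
\]
is a Carleson measure for $\mathcal{D}$. The plan is to invoke the one-box criterion of Theorem~\ref{Onebox}: produce an increasing function $\phi:(0,2\pi]\to(0,\infty)$ with $\int_0^{2\pi}\phi(x)x^{-1}\,dx<\infty$ such that $\mu_{\omega,E}(\overline{S(I)})\lesssim\phi(|I|)$ for every arc $I$. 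Because $\mu_{\omega,E}$ is supported on $\mathbb{T}$, this is the pointwise bound $\mu_{\omega,E}(I)\lesssim\phi(|I|)$.

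The strategy is to exploit the self-similar hierarchy of the Cantor set. Writing $(I_{k,n})$ for the $2^n$ complementary arcs of generation $n$, each of length comparable (uniformly in $n$) to $\pi\zeta_1\cdots\zeta_n$ by the assumption $\limsup_n\zeta_n<1/2$, any arc $I$ of length $\asymp\pi\zeta_1\cdots\zeta_n$ meets only boundedly many $I_{k,n}$. Combining this with the $\mathcal{K}$-distribution estimate $|E_t\cap I|\lesssim|I|(t/|I|)^{\beta_E}$ from Lemma~\ref{kset}(5) and the layer-cake identity
\[
\mu_{\omega,E}(I)=\int_{0}^{\infty}t\omega'(t)^{2}\,d\big(|E_{t}\cap I|\big)
\]
reduces the problem to controlling $\mu_{\omega,E}(E_t)$ as $t\to 0$. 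A natural candidate is then $\phi(t):=\mu_{\omega,E}(E_t)$, which is increasing and, by hypothesis (3), tends to $0$ with $t$.

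The main obstacle is to meet the two competing constraints on $\phi$: the bound $\mu_{\omega,E}(I)\lesssim\phi(|I|)$ forces $\phi$ to grow fast enough at small scales, while the Dini condition $\int_0^{2\pi}\phi(x)x^{-1}dx<\infty$ forces decay. This is exactly where the positive logarithmic capacity of $E$ is used: by the equivalence $c(E)>0\iff\int_0^1 ds/|E_s|<\infty$ recalled before the corollary, the generation lengths $\pi\zeta_1\cdots\zeta_n$ do not decrease too fast, and inserting this with the regularity hypotheses on $\omega$ (concavity of $\omega(t^\gamma)$ and monotonicity of $t\omega'(t)/\omega(t)$) yields the required Dini integrability for the candidate $\phi$. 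Once the one-box bound is verified, Theorem~\ref{Onebox} delivers the Carleson measure property, and Theorem~\ref{CarlesonMeasure2} concludes $f_{\omega,E}\in\mathcal{M}(\mathcal{D})$.
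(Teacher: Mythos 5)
Your reduction of the corollary to the one-box criterion is the right frame, and the easy implications are handled exactly as in the paper: $E\in\mathcal{K}\subset\mathcal{L}_2$ via Proposition~\ref{kholder} and Lemma~\ref{gkset}, the equivalence $(1)\Leftrightarrow(3)$ from the local estimate, and the reduction of $(3)\Rightarrow(2)$ to showing that $\dd\mu_{\omega,E}$ is a Carleson measure, to be verified through Theorem~\ref{Onebox}. The gap is in your choice of majorant. With $\phi(t):=\mu_{\omega,E}(E_t)=\int_0^t s\,\omega'(s)^2N_E(s)\,\dd s$, the Dini integral becomes
\[
\int_0^1\frac{\phi(x)}{x}\,\dd x=\int_0^1 s\,\omega'(s)^2N_E(s)\log(1/s)\,\dd s,
\]
which carries an extra factor $\log(1/s)$ and is \emph{not} finite under hypothesis (3) alone: for the symmetric Cantor set of dimension $d$ one has $N_E(s)\asymp s^{-d}$, and taking $\omega'(s)^2\asymp s^{d-2}/\log^2(1/s)$ makes (3) converge while your Dini integral diverges. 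Your candidate $\phi$ would only yield the weaker, logarithmically stronger sufficient condition of Corollary~1.4, not the equivalence claimed here; and your appeal to positive capacity and to the regularity of $\omega$ at the end is never actually connected to the convergence of $\int_0^{2\pi}\phi(x)x^{-1}\dd x$.

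What makes the sharp statement work in the paper is a renormalization you are missing. One does not bound $\mu_{\omega,E}(I)$ by $\mu_{\omega,E}(E_{|I|})$ (which is wasteful), but uses the self-similarity estimate $N_{E\cap I}(t)\lesssim N_E(t)/N_E(|I|)$ — inside an arc $I$ of a given generation, only the proportion $1/N_E(|I|)$ of the gaps of $E$ of length $>2t$ occur. This gives $\mu_{\omega,E}(\overline{S(I)})=O(\phi(|I|))$ with the smaller majorant
\[
\phi(s):=\frac{1}{N_E(s)}\int_0^{s}t\,\omega'(t)^2N_E(t)\,\dd t,
\]
and then Fubini turns the Dini integral into $\int_0^1 t\,\omega'(t)^2N_E(t)\bigl(\int_t^1\frac{\dd s}{sN_E(s)}\bigr)\dd t$. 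It is precisely the inner integral $\int_t^1\frac{\dd s}{sN_E(s)}\asymp\int_t^1\frac{\dd s}{|E_s|}$ that the positive logarithmic capacity bounds uniformly, so the whole expression is controlled by $\int_0^1 t\,\omega'(t)^2N_E(t)\,\dd t$, i.e.\ by condition (3) with no extra logarithm. You should replace your candidate $\phi$ by this renormalized one and prove the counting estimate $N_{E\cap I}(t)\lesssim N_E(t)/N_E(|I|)$; the distribution bound $|E_t\cap I|\lesssim|I|(t/|I|)^{\beta_E}$ from Lemma~\ref{kset}(5) is not the right tool here.
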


\begin{proof} Let $f_{\omega, E} \in \mathcal{D}$. Since $E \in \mathcal{K} \subset \mathcal{L}_2$, by Lemma \ref{CarlesonMeasure1}, it suffices to show that the measure \( d\mu_{\omega,E}(\zeta) := \dd(\zeta,E) \omega'(\dd(\zeta,E))^2 \) is a Carleson measure for \( \mathcal{D} \).\\

We have
\begin{equation}\label{eq1}
\mu(\overline{S(I)}) \asymp \int_I \omega'(\dd(\zeta,E))^2 \dd(\zeta,E) |\dd\zeta| = \int_0^{|I|} t \omega'(t)^2 N_{E \cap I}(t) \dd t.
\end{equation}
Note that 
$$
 \displaystyle N_{E \cap I}(t) \lesssim \frac{N_E(t)}{N_E(|I|)}.
$$
 If we set $$ \phi(s):= \int_0^{s} t \omega'(t)^2  \frac{N_E(t)}{N_E(s)} \dd t,$$
 then, the inequality (\ref{eq1}) becomes 
 $$\mu(\overline{S(I)})= O( \phi(|I|)).$$
On the other hand, using the fact that $c(E) >0$ that is $\int_0^1 \frac{ds}{|E_s|}<\infty$, we have\\
\[
\begin{array}{lll}
\displaystyle \int_0^{1} \frac{\phi(x)}{x} \dd x & = & \displaystyle \int_0^1 t \omega'(t)^2 N_E(t) \left( \int_{s=t}^1 \frac{\dd s}{s N_E(s)} \right) \dd t\\

& \asymp & \displaystyle  \int_0^1 t \omega'(t)^2 N_E(t) \dd t.\\

& \lesssim & \displaystyle  \int_0^1 \omega'(t)^2 |E_t| \dd t.\\

& \asymp & \displaystyle  \int_{\mathbb{T}} \omega'(\dd(\zeta,E))^2 \dd(\zeta,E) |\dd\zeta| < +\infty.
\end{array}
\]
The proof is completed by applying Theorem \ref{Onebox}.
\end{proof}

\subsection{Remarks and Examples}

{\bf 1.} There exists $E$ such that $f_{\alpha,E}\in \mathcal{D}\setminus \cM(D)$ for $\alpha<1/2$.
Before giving such an example, let us recall that a necessary condition for a measure $\mu$ to be a Carleson measure for the Dirichlet space $\cD$  is that
\begin{equation}\label{CN}
\mu(\overline{S(I)}) = O(1/\log(1/|I|)),  \quad (|I|\to 0)
\end{equation}

 Let us now $\theta_n=\sum_{k\geq n} t_k$ where $t_n^{2\alpha}=1/(n(\log n)^\beta)$,  where  $\beta>1$  will be chosen later. Set  
$$E=\big\{e^{i\theta_n}, n\geq 2\big\}\cup \{1\}. $$ 
We  have 
$$ \int_\mathbb{T} \dist(\zeta,E)^{2\alpha-1}{|\dd\zeta|} \asymp \sum_{n\geq 1} (\theta_n-\theta_{n+1})^{2\alpha} = \sum_{n\geq 2} \frac{1}{n(\log n)^\beta}<\infty.$$
Then $f_{\alpha,E}\in \cD$.

Now we proof that  $f_{\alpha,E}\notin\cM(D)$ for $\alpha<1/2$. Since \eqref{remark}
 $ \dist(\zeta,E)^{2\alpha-1}\lesssim \cD_\zeta(f_{\alpha,E})$. Therefore,  if $ \cD_\zeta(f_{\alpha,E})|\dd \zeta|$ is  a Carleson measure, then for all arcs $I\subset \TT$, we have 
\begin{equation}
\int_I \dist(\zeta,E)^{2\alpha-1} |\dd\zeta|  \leq \frac{C}{\log (1/|I|)}.
\end{equation}
Take $I=(1, e^{i\theta_N})$, 
$$|I|=\theta_N=\sum_{k\geq N} \frac{1}{n^{1/2\alpha}(\log en)^{\beta/{2\alpha}}}\asymp \frac{1}{n^{({1}/{2\alpha})-1}(\log en)^{\beta/{2\alpha}}}$$
and $\log (1/|I|)\asymp \log n$.  Since 
$$\int_I \dist(\zeta,E)^{2\alpha-1} |\dd\zeta|  \asymp \sum_{n\geq N} (\theta_n-\theta_{n+1})^{2\alpha} = \sum_{n\geq N}\frac{1}{n(\log n)^\beta}\asymp  \frac{1}{(\log n)^{\beta-1}}.$$
The necessary condition \eqref{CN} is satisfied when $\beta \in (1,2)$. Hence, if  $\beta>2$,  \eqref{CN} is not satisfied  $ \cD_\zeta(f_{\alpha,E})|\dd \zeta|$ is not a  a Carleson measure for $\cD$.  By Lemma \ref{CarlesonMeasure1},  $f_{\alpha,E}\notin\cM(D)$, which completes the proof.

3. Let us note that if we set
$$\beta_E=\sup\big\{\beta>0 \text { : }   \dist(\cdot ,E)^{-\beta}\in L^1(\T)\big\},$$ then by  Corollary \ref{corcantor} we get
\begin{itemize} 
\item if $\alpha>(1-\beta_E)/2$, then $f_{\alpha,E}\in \mathcal{M}(\cD)$, 
\item  if $\alpha<(1-\beta_E)/2$, then $f_{\alpha,E}\notin \cD$, 
\end{itemize}
The case $\alpha=(1-\beta_E)/2$ depends on $E$.
Indeed, if we specify the set $E$, we obtain a better result. For $\zeta \in (0, 1/2)$, let $E_\zeta$ be the perfect symmetric set of constant ratio $\zeta$.
Recall that $\text{\rm dim}(E_\zeta)$, the Hausdorff dimension of $E_\zeta$, is equal to $({\log 2})/{\log(1/\zeta)}$.
 We have
$$\int_\TT  \dist(\xi,E)^{-\beta} |\dd \xi|\asymp \sum_n2^n\int_0^{(1-\zeta)\zeta^n}t^{-\beta}dt\asymp(2\zeta^{1-\beta})^n$$
Hence 
$$\dist(\cdot,E)^{-\beta} \in L^1(\T) \iff \beta<1-\frac{\log 2}{\log(1/\zeta)}=1-\dim(E_\zeta)=\beta_E.$$
We get 
the following result
{ \begin{cor}  
Let $\xi \in (0, 1/2)$ and let $E_\xi$  be the perfect symmetric set of constant ratio $\xi$. The following assertions are equivalent:
\begin{enumerate}
\item $f_{\omega, E} \in \mathcal{M}(\mathcal{D})$,
\item $f_{\omega, E} \in \mathcal{D}$,
\item $
\displaystyle \int _0^1\omega '(t)^2t^{1-\dim E_\xi}dt<\infty.
$
\end{enumerate}
\end{cor}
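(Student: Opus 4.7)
The plan is to reduce the statement to Corollary~\ref{corcantor} and then evaluate the integral condition appearing there in the specific case $E=E_\xi$. Since $\zeta_n=\xi$ is constant with $\xi<1/2$, Proposition~\ref{kholder} gives that $E_\xi$ is a $\mathcal{K}$-set, hence belongs to $\mathcal{L}_2$ by Lemma~\ref{gkset}. Moreover $E_\xi$ has positive logarithmic capacity: indeed, with $\pi(\zeta_1\cdots\zeta_n)=\pi\xi^n$, the series
\[
\sum_{n\ge 0}2^{-n}\log\bigl(1/\pi(\zeta_1\cdots\zeta_n)\bigr)=\sum_{n\ge 0}2^{-n}\bigl(n\log(1/\xi)-\log\pi\bigr)
\]
converges, so $c(E_\xi)>0$. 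Corollary~\ref{corcantor} therefore applies and yields
\[
f_{\omega,E_\xi}\in\mathcal{D}\iff f_{\omega,E_\xi}\in\mathcal{M}(\mathcal{D})\iff \int_{\T}\dist(\zeta,E_\xi)\,\omega'(\dist(\zeta,E_\xi))^2\,|\dd\zeta|<\infty.
\]

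It remains to show the last integral is comparable with $\int_0^1 \omega'(t)^2\,t^{1-\dim E_\xi}\,\dd t$. Using the layer-cake identity stated just before Corollary~\ref{corcantor},
\[
\int_{\T}\dist(\zeta,E_\xi)\,\omega'(\dist(\zeta,E_\xi))^2\,|\dd\zeta|=\int_0^\pi t\,\omega'(t)^2\,N_{E_\xi}(t)\,\dd t,
\]
so the key step is the estimate $N_{E_\xi}(t)\asymp t^{-\dim E_\xi}$ for $t\in(0,\pi]$. This is a standard consequence of the self-similar structure of $E_\xi$: the $n$-th generation produces $2^n$ complementary arcs of length $\asymp (1-2\xi)\xi^n$, and for a given $t>0$ one chooses $n$ so that $\xi^{n+1}\lesssim t\lesssim \xi^n$; then $N_{E_\xi}(t)\asymp 2^n\asymp \xi^{-n\log 2/\log(1/\xi)}\asymp t^{-\dim E_\xi}$, since $\dim(E_\xi)=\log 2/\log(1/\xi)$. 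Inserting this estimate in the above equality yields
\[
\int_{\T}\dist(\zeta,E_\xi)\,\omega'(\dist(\zeta,E_\xi))^2\,|\dd\zeta|\asymp\int_0^\pi \omega'(t)^2\,t^{1-\dim E_\xi}\,\dd t,
\]
which establishes the equivalence $(2)\Leftrightarrow(3)$ and completes the chain.

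The essential tools have all been set up in the preceding sections, so no genuinely new argument is required; the only point requiring a brief justification is the asymptotic $N_{E_\xi}(t)\asymp t^{-\dim E_\xi}$, which I would record as a short lemma (comparing the counting function with the natural geometric partition of $\T\setminus E_\xi$). That estimate is where one must be careful about the constant $1-2\xi$ in the arc-length formula, but otherwise the argument is mechanical given Proposition~\ref{kholder} and Corollary~\ref{corcantor}.
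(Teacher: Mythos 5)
Your proposal is correct and follows essentially the same route as the paper: invoke Proposition~\ref{kholder} and Lemma~\ref{gkset} to place $E_\xi$ in $\mathcal{K}\subset\mathcal{L}_2$, check $c(E_\xi)>0$ via the stated series criterion so that Corollary~\ref{corcantor} applies, and then convert the integral $\int_{\T}\dist(\zeta,E_\xi)\,\omega'(\dist(\zeta,E_\xi))^2\,|\dd\zeta|$ into $\int_0^1\omega'(t)^2t^{1-\dim E_\xi}\,\dd t$ using the self-similar count of complementary arcs ($2^n$ gaps of length $\asymp\xi^n$, i.e.\ $N_{E_\xi}(t)\asymp t^{-\dim E_\xi}$), exactly as in the paper's computation of $\beta_{E_\xi}$. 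No gaps; your explicit recording of the counting-function estimate is the only (harmless) elaboration beyond what the paper writes.
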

}

We conclude this section with the following proposition
\begin{prop}Let  $E \subset \mathbb{T}$ be a closed subset such that $\dist(\cdot,E)^{-\sigma}\in L^1(\T)$, where $\sigma\in (0,1)$.
If the measure 
$$d\widetilde{\mu}_{\omega,E}:=\dist(\zeta,E)\omega '(\dist (\zeta, E))^2\log^2\big(1/\dist(\zeta,E)\big)|d\zeta|$$

 is a Carleson measure for $\mathcal{D}$, then the outer function $f_{\omega,E}$ is a multiplier for $\mathcal{D}$.
\end{prop}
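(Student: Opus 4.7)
The plan is to combine the pointwise estimate of Theorem~\ref{GammaCarleson} with the characterization of multipliers given by Lemma~\ref{Dirichlet-local}. The argument is essentially a one-step deduction, with all the real analytic content already packaged into Theorem~\ref{GammaCarleson}.

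First I would verify that $f_{\omega,E}\in H^\infty$. Since $\omega:(0,\pi]\to\mathbb{R}^+$ is continuous and increasing, the boundary values satisfy
\[
|f^*_{\omega,E}(\zeta)|=\omega(\dist(\zeta,E))\leq \omega(\pi)<\infty \quad \text{a.e.\ on }\mathbb{T},
\]
so by the maximum modulus principle for outer functions, $\|f_{\omega,E}\|_{H^\infty}\leq \omega(\pi)$. Hence the $H^\infty$ hypothesis required by Lemma~\ref{Dirichlet-local} is met.

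Next, by Theorem~\ref{GammaCarleson}, the assumption $\dist(\cdot,E)^{-\sigma}\in L^1(\mathbb{T})$ for some $\sigma\in(0,1)$ gives the pointwise bound
\[
\mathcal{D}_\zeta(f_{\omega,E})\lesssim \dist(\zeta,E)\,\omega'(\dist(\zeta,E))^2\log^2\bigl(1/\dist(\zeta,E)\bigr),\qquad \zeta\in \mathbb{T}\setminus E.
\]
Equivalently, $\mathcal{D}_\zeta(f_{\omega,E})|d\zeta|\leq C\,d\widetilde{\mu}_{\omega,E}$ as measures on $\mathbb{T}$ for some constant $C>0$. Since the Carleson embedding $\mathcal{D}\hookrightarrow L^2(\overline{\mathbb{D}},\mu)$ is monotone with respect to domination of positive measures, and $d\widetilde{\mu}_{\omega,E}$ is assumed to be a Carleson measure for $\mathcal{D}$, we deduce that $\mathcal{D}_\zeta(f_{\omega,E})|d\zeta|$ is also a Carleson measure for $\mathcal{D}$.

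Applying Lemma~\ref{Dirichlet-local}, the combination of $f_{\omega,E}\in\mathcal{D}\cap H^\infty$ and the Carleson property of $\mathcal{D}_\zeta(f_{\omega,E})|d\zeta|$ yields $f_{\omega,E}\in\mathcal{M}(\mathcal{D})$. There is no genuine obstacle in this proof beyond the mild book-keeping check that $f_{\omega,E}\in\mathcal{D}$, which follows a posteriori from the Carleson hypothesis: testing against the constant function $\mathbf{1}\in\mathcal{D}$ already gives $\int_{\mathbb{T}}\mathcal{D}_\zeta(f_{\omega,E})\,|d\zeta|<\infty$, hence $f_{\omega,E}\in\mathcal{D}$. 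All the analytic difficulty has been absorbed into the estimate of Theorem~\ref{GammaCarleson}; once that is in hand, the present proposition is a direct corollary.
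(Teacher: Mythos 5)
Your proof is correct and is exactly the argument the paper intends (the paper states this proposition without proof, as an immediate consequence of Theorem~\ref{GammaCarleson} combined with Lemma~\ref{Dirichlet-local}): the pointwise bound $\mathcal{D}_\zeta(f_{\omega,E})\lesssim \dist(\zeta,E)\,\omega'(\dist(\zeta,E))^2\log^2(1/\dist(\zeta,E))$ dominates $\mathcal{D}_\zeta(f_{\omega,E})\,|d\zeta|$ by the assumed Carleson measure, and the multiplier criterion does the rest. Your book-keeping checks ($f_{\omega,E}\in H^\infty$ and $f_{\omega,E}\in\mathcal{D}$ via testing against the constant function) are sound; note only that the standing hypotheses on $\omega$ from Theorem~\ref{GammaCarleson} (concavity of $\omega(t^\gamma)$ for some $\gamma>2$ and monotonicity of $t\omega'(t)/\omega(t)$) are tacitly in force here, even though the proposition's statement omits them.
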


\section{\bf{Approximation in Dirichlet spaces}}

Let $\mu$ a positive Borel
measure on $\T$.  Recall that the polynomials are dense in \( \cD(\mu) \).  A function \( f \in \cD(\mu) \) is called \textit{cyclic} in \( \cD(\mu) \) if 
\[
[f]_{\cD(\mu)} = \cD(\mu),
\]
where 
\[
[f]_{\cD(\mu)} := \overline{\{ p f \ : \ p \text{ is a polynomial} \}}.
\] 
In order to introduce the capacity associated with  \( \cD(\mu) \), we first recall the definition of the harmonic space associated with $\cD(\mu)$. Namely, let \( \mathcal{D}^h(\mu) \) be the set of functions \( f \in L^2(\mathbb{T}) \) such that   \(  \mathcal{D}_\mu(f)<\infty \), equipped with the norm
\[
\|f\|^2_{\mu} := |f(0)|^2 + \mathcal{D}_{\mu}(f).
\]

We recall  the definition of capacity in the sense of Beurling–Deny: let \( U \) be an open subset of $\T$, the \( c_\mu \)-capacity of \( U \) is defined by
\[
c_\mu(U) := \inf \left\{ \|f\|^2_\mu : u \in \mathcal{D}^h(\mu), f \geq 0 \text{ and } f \geq 1 \text{ a.e. on } U \right\}. 
\]
As usual, we define the \( c_\mu \)-capacity of any subset \( F \subset \mathbb{T} \) by
\[
c_\mu(F) = \inf \left\{ c_\mu(U) : U \text{ open, } F \subset U \right\}.
\]
\subsection{Polar sets} 
In this section, we establish a sufficient condition ensuring that a set has zero capacity.  

The following change-of-variables formula will be used several times in the sequel, we suppose that $\mu(E)=0$.
\[
\int_{\mathbb{T}} \Omega(\dist(\zeta,E))\, d\mu(\zeta)
   = \int_0^{2\pi} \Omega(t)\, d\mu(E_t).
\]

\begin{itemize}
\item
If \(\Omega\) is differentiable, then
\begin{eqnarray*}
\int_0^{2\pi} \Omega(t)\, d\mu(E_t)
  & =& -\int_0^{2\pi} \Big( \int_{t}^{2\pi} \Omega'(s)\, ds \Big)\, d\mu(E_{t})
     +\Omega(2\pi)\,\mu(E_{2\pi})\\
  & = &-\int_0^{2\pi} \Omega'(s)\, \mu(E_s)\, ds + \Omega(2\pi)\,\mu(\T).
\end{eqnarray*}
Hence
\begin{equation}\label{changement1}
\int_{\mathbb{T}} \Omega(\dist(\zeta,E))\, d\mu(\zeta)
   =-\int_0^{2\pi} \Omega'(s)\, \mu(E_s)\, ds + \Omega(2\pi)\,\mu(\T).
\end{equation}

\item
If \(\Omega(2t) \asymp \Omega(t)\), then
\begin{eqnarray*} \int_0^{2\pi} \Omega(t)\, d\mu(E_t)&\asymp&\sum_n \Omega(2^{-n})(\mu(E_{2^{-n}})-\mu(E_{2^{-n-1}}))\\
&\lesssim &\int_0^{2\pi} \Omega(t)\, \frac{\mu(E_t)}{t}\, dt +\Omega(2\pi)\,\mu(\T).
\end{eqnarray*}
Hence 
\begin{equation}\label{changement2}
\int_{\mathbb{T}} \Omega(\dist(\zeta,E))\, d\mu(\zeta)
   \lesssim \int_0^{2\pi} \Omega(t)\, \frac{\mu(E_t)}{t}\, dt + \Omega(2\pi)\,\mu(\T).
\end{equation}
\end{itemize}

We need the following lemma, which provides an estimate for $\cD_\mu(f_{\omega,E})$ under the assumption that $\omega$ is non-increasing.
\begin{lem}\label{lemma11}
Let \(E \subset \mathbb{T}\) be an \(\mathcal{L}_2\)-set. Let \(\omega\) be a non-increasing function such that \( \omega(t^\gamma)\) is convexe for some \(\gamma > 2\), and such that \(t|\omega'(t)|\omega(t)\) is increasing. If
\begin{equation}\label{converge1}
\int_0^1 \omega'(t)^2\, \mu(E_t)\, dt < \infty,
\end{equation}
then \(f_{\omega,E} \in \mathcal{D}(\mu)\).
\end{lem}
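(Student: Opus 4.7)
The approach mirrors that of Theorem~\ref{appartdmu}, adapted to the non-increasing case. The plan is to first establish the pointwise bound
\[
\mathcal{D}_\zeta(f_{\omega,E}) \lesssim \dist(\zeta,E)\,\omega'(\dist(\zeta,E))^2, \qquad \zeta \in \mathbb{T}\setminus E,
\]
by revisiting the computations of Lemmas~\ref{lemma1}, \ref{lemma2}, and \ref{estimationgamma}. The Richter--Sundberg identity~\eqref{eqforme} extends to non-increasing $\omega$ with $\omega'$ replaced by $|\omega'|$: for $\delta'<\delta$,
\[
\omega^2(\delta')-\omega^2(\delta)-2\omega^2(\delta)\log\bigl(\omega(\delta')/\omega(\delta)\bigr) = 4\int_{\delta'}^{\delta} \frac{|\omega'(t)|}{\omega(t)}\int_t^{\delta}|\omega'(s)|\omega(s)\,ds\,dt,
\]
and the decomposition of $\mathbb{T}\setminus\{\zeta\}$ into $I(\zeta,E)$, $\Sigma(\zeta,E)$, and $\Gamma(\zeta,E)$, combined with the $\mathcal{L}_2$-hypothesis, then yields the bound exactly as in the increasing case. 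The convexity of $\omega(t^\gamma)$ plays the role previously played by concavity, and the monotonicity of $t|\omega'(t)|\omega(t)$ substitutes for that of $t\omega'(t)/\omega(t)$ in controlling each subregion $I_1,I_2,I_3$ and $\Gamma$.

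Given this pointwise estimate, I integrate in $\zeta$ against $\mu$ and apply the change-of-variables formula to get
\[
\mathcal{D}_\mu(f_{\omega,E}) \;\lesssim\; \int_\mathbb{T}\dist(\zeta,E)\,\omega'(\dist(\zeta,E))^2\,d\mu(\zeta) \;=\; \int_0^{\pi} t\,\omega'(t)^2\,d\mu(E_t).
\]

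The main technical step is to reduce this last integral to~\eqref{converge1}. From the monotonicity of $t|\omega'(t)|\omega(t)$ together with $\omega$ non-increasing, one obtains for $0<t\le s\le 2t$:
\[
|\omega'(t)| \;\leq\; \frac{s\,\omega(s)}{t\,\omega(t)}\,|\omega'(s)| \;\leq\; 2|\omega'(s)|,
\]
hence $\omega'(t)^2 \leq 4\omega'(s)^2$, and integrating over $s\in[t,2t]$ yields $t\,\omega'(t)^2 \leq 4\int_t^{2t}\omega'(s)^2\,ds$. A Fubini exchange then gives
\[
\int_0^{\pi} t\,\omega'(t)^2\,d\mu(E_t) \;\leq\; 4\int_0^{2\pi} \omega'(s)^2\Bigl(\int_{s/2}^{\min(s,\pi)}d\mu(E_t)\Bigr)ds \;\leq\; 4\int_0^{2\pi}\omega'(s)^2\,\mu(E_s)\,ds,
\]
which is finite by~\eqref{converge1} (the portion $s\in[1,2\pi]$ is handled using that $|\omega'|$ is bounded on $[1,\pi]$ by the monotonicity hypothesis applied between $1$ and $\pi$, together with $\mu(E_s)\leq\mu(\mathbb{T})$).

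I expect the principal difficulty to be the first step: carefully checking that the pointwise estimates of Lemmas~\ref{lemma1}, \ref{lemma2}, and \ref{estimationgamma} survive verbatim in the non-increasing setting, confirming that each appeal to concavity of $\omega(t^\gamma)$ or monotonicity of $t\omega'(t)/\omega(t)$ has an exact analog under the convexity of $\omega(t^\gamma)$ and monotonicity of $t|\omega'(t)|\omega(t)$. Once this verification is complete, the remaining two steps are essentially bookkeeping.
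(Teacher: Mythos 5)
Your proposal is correct and follows essentially the same route as the paper: establish the pointwise bound $\mathcal{D}_\zeta(f_{\omega,E})\lesssim \dist(\zeta,E)\,\omega'(\dist(\zeta,E))^2$ by rerunning Lemmas \ref{lemma1}, \ref{lemma2} and \ref{estimationgamma} with the non-increasing hypotheses, then integrate against $\mu$ and pass to $\int_0^{2\pi}\omega'(t)^2\mu(E_t)\,dt$ via a change of variables. Your explicit doubling argument ($|\omega'(t)|\le 2|\omega'(s)|$ for $t\le s\le 2t$, followed by Fubini) is just a spelled-out version of the paper's appeal to $\Omega(2t)\asymp\Omega(t)$ for $\Omega(t)=t\omega'(t)^2$ and formula \eqref{changement2}.
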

\begin{proof}
The proof closely follows that of Lemma \ref{lemma1} and Lemma \ref{estimationgamma}, with only minor adjustments to take into account the assumptions on $\omega$. 
Set \(\delta=d(\zeta,E)\), we obtain 
$$\mathcal{D}_{\zeta}(f_{\omega,E})\lesssim \delta \omega'(\delta)^2$$
Now let $\Omega(t)=t\omega'(t)^2$, we have $\Omega(2t)\asymp \Omega(t)$ and by \eqref{changement2},  we get 
$$\int_\TT \mathcal{D}_{\zeta}(f_{\omega,E})\, d\mu(\zeta) \lesssim  \int_\TT \Omega(\dist(\zeta,E))\, d\mu(\zeta) \lesssim \int_\TT \Omega(t)\, \frac{\mu(E_t)}{t} dt $$
and this completes the proof.
\end{proof}

The following result follows immediately from the weak inequality on capacities.
\begin{prop}\label{polair}
Let $F$ be a closed subset of  $\mathbb{T}$. Then $F$ is $c_\mu$-polar that is,
$c_\mu(F) = 0$, if and only if there exists an outer function  $f \in \mathcal{D}(\mu)$
 such that $ f^*(\zeta) = \infty $  for all $ \zeta \in F$.
\end{prop}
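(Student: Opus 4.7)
Both implications follow from the weak-type capacity inequality, $c_\mu(\{u>\lambda\})\le\lambda^{-2}\|u\|_\mu^2$, valid for every non-negative $u\in\mathcal{D}^h(\mu)$ and every $\lambda>0$ (itself an immediate consequence of the definition of $c_\mu$ applied to the competitor $u/\lambda$).

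The direction $(\Leftarrow)$ is essentially immediate. Suppose $f\in\mathcal{D}(\mu)$ is outer with $f^*(\zeta)=\infty$ for every $\zeta\in F$. The scalar contraction $\bigl||a|-|b|\bigr|\le|a-b|$ shows that $|f^*|\in\mathcal{D}^h(\mu)$ with $\||f^*|\|_\mu\le\|f\|_\mu$. Since $F\subset\{|f^*|>n\}$ for every $n\ge1$, applying the weak-type inequality to $|f^*|$ yields $c_\mu(F)\le\|f\|_\mu^2/n^2$; letting $n\to\infty$ gives $c_\mu(F)=0$.

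For the converse $(\Rightarrow)$, suppose $c_\mu(F)=0$. Using the definition of capacity together with a standard truncation, I would extract $(u_n)\subset\mathcal{D}^h(\mu)$ with $0\le u_n\le 1$, $u_n\equiv1$ on an open neighbourhood $V_n\supset F$, and $\|u_n\|_\mu^2<2^{-n}$. Setting $U:=\sum_{n\ge1}2^{n/2}u_n$, the series converges in $\mathcal{D}^h(\mu)$, and at each $\zeta\in F$ the harmonic extension of $U$ has non-tangential limit $+\infty$ (each summand contributes at least $2^{n/2}$ by the Poisson-integral lower bound on $V_n\ni\zeta$). The natural candidate is then the outer function $f$ with boundary modulus $|f^*|=1+U$: it belongs to $H^2$ since $U\in L^2(\T)$, is well defined because $\log(1+U)\in L^1(\T)$, and satisfies $f^*(\zeta)=\infty$ at every $\zeta\in F$.

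The main obstacle is verifying that $f\in\mathcal{D}(\mu)$. Using the Richter--Sundberg formula \eqref{RSfrormula} for the outer function, $\mathcal{D}_\zeta(f)$ is expressed as an integral of a Bregman-type divergence in $|f^*|^2=(1+U)^2$; introducing $V:=\log(1+U)$, which lies in $\mathcal{D}^h(\mu)$ because $\log(1+\cdot)$ is $1$-Lipschitz (hence contracts the local Dirichlet integral), and applying a Taylor expansion to $\phi(x)=e^{2x}$ yields the pointwise bound
\[
|f^*(\zeta')|^2-|f^*(\zeta)|^2-2|f^*(\zeta)|^2\log\tfrac{|f^*(\zeta')|}{|f^*(\zeta)|}
\;\le\;2\bigl(1+\max(U(\zeta),U(\zeta'))\bigr)^2\bigl(V(\zeta)-V(\zeta')\bigr)^2.
\]
Integrating against $|\dd\zeta'|/|\zeta-\zeta'|^2$ and $\dd\mu(\zeta)$ and splitting the region of integration into the regime where $U(\zeta)$ and $U(\zeta')$ are comparable versus its complement (where one exploits both $\mu(F)=0$, a direct consequence of $c_\mu(F)=0$, and the $L^2$-integrability of $U$) should yield $\mathcal{D}_\mu(f)\lesssim\mathcal{D}_\mu(U)<\infty$. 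This careful estimate controlling the unbounded factor $(1+\max(U,U'))^2$ is the technical heart of the argument.
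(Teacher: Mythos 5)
The paper itself offers no written proof here (it simply asserts that the proposition ``follows immediately from the weak inequality on capacities''), so there is nothing to match your argument against line by line. Your $(\Leftarrow)$ direction is exactly the intended one and is fine: $|f^*|\in\mathcal{D}^h(\mu)$ by the contraction $\bigl||a|-|b|\bigr|\le|a-b|$, and the weak-type inequality applied to $|f^*|/n$ kills the capacity of $F$ (modulo the standard quasi-continuity point needed to pass from the level set of a boundary function to open neighbourhoods, which is part of what ``the weak inequality'' packages). One small slip in the converse: with $\|u_n\|_\mu^2<2^{-n}$ you get $\|2^{n/2}u_n\|_\mu<1$, so $\sum_n 2^{n/2}u_n$ has no reason to converge in $\mathcal{D}^h(\mu)$. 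Take $\|u_n\|_\mu^2<4^{-n}$, or simply $U=\sum_n u_n$ (norm-convergent, and still $=+\infty$ on $F$ since each summand contributes at least $1$ there).

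The substantive gap is the step you yourself flag as the technical heart: showing that the outer function $f$ with $|f^*|=1+U$ lies in $\mathcal{D}(\mu)$. Your Taylor bound with $\phi(x)=e^{2x}$ is correct, but it does not close the argument, because the factor $\bigl(1+\max(U(\zeta),U(\zeta'))\bigr)^2$ is unbounded and the splitting you propose does not absorb it. Concretely, in the regime $U(\zeta)\gg U(\zeta')$ the Richter--Sundberg integrand $a^2-b^2-2b^2\log(a/b)$ (with $a=1+U(\zeta')$, $b=1+U(\zeta)$) is of size $b^2\log(b/a)$, which exceeds $(a-b)^2$ by a factor of order $\log\bigl(1+U(\zeta)\bigr)$; the splitting then leaves you needing $\int_\T \log(1+U)\,\mathcal{D}_\zeta(U)\,d\mu(\zeta)<\infty$, which does not follow from $U\in\mathcal{D}^h(\mu)$. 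Note that the elementary inequality actually runs the wrong way: $a^2-b^2-2b^2\log(a/b)\ge(a-b)^2$ for all $a,b>0$, so $\mathcal{D}_\zeta(f)\ge\mathcal{D}_\zeta(|f^*|)$ and there is no general upper bound of $\mathcal{D}_\zeta(f)$ by the Dirichlet integral of the modulus. Some genuinely new input is needed here: for instance, replace the outer function with prescribed modulus by $f=1+u+i\tilde u$ where $u=\sum u_n$ (this has positive real part, hence is outer, and $|f^*|\ge 1+u^*=\infty$ on $F$), which reduces the matter to the boundedness of the Riesz projection on $\mathcal{D}^h(\mu)$; or construct $f$ directly as a potential against the reproducing kernel, as in the classical Dirichlet-space proof. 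As written, the converse direction is not proved.
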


\begin{thm}\label{polair2}
Let \(E \subset \mathbb{T}\) be an \(\mathcal{L}_2\)-set. Assume moreover that the measure \(\mu\) satisfies
\[
\mu(E_t) = O\bigl(h(t)\bigr)
\quad\text{as }t \to 0^+,
\]
where \(h\colon(0,1)\to\R^+\), ${h(t)}/{t}\text{ is decreasing}$, $h(t)/t^\beta$ is increasing for $\beta\in (0,1)$. If
\begin{equation}\label{diverge}
\int_{0}^{1} \frac{dt}{h(t)} = +\infty,
\end{equation}
then $c_\mu(E)=0$
\end{thm}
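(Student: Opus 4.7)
The strategy is to invoke Proposition \ref{polair}: to establish $c_\mu(E)=0$ it suffices to exhibit a single outer function $f\in\mathcal{D}(\mu)$ whose boundary value $f^*$ equals $+\infty$ at every point of $E$. The natural candidate is a distance function $f_{\omega,E}$ with a non-increasing weight $\omega$ that blows up at $0$, and the problem reduces to finding $\omega$ with $\omega(0^+)=+\infty$ satisfying both the integrability condition \eqref{converge1} of Lemma \ref{lemma11} and its regularity assumptions. The assumption \eqref{diverge} is exactly what allows us to produce such an $\omega$.

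Set
\[
H(t) := 1 + \int_t^{1}\frac{ds}{h(s)}, \qquad \omega(t) := \log H(t).
\]
Hypothesis \eqref{diverge} forces $H(0^+)=+\infty$, so $\omega$ is positive, non-increasing, continuous on $(0,1]$, tends to $+\infty$ as $t\to 0^+$, and vanishes at $t=1$. The associated outer distance function satisfies $|f_{\omega,E}^*(\zeta)|=\omega(\dist(\zeta,E))$ a.e.\ on $\T$, whence $f_{\omega,E}^*(\zeta) = +\infty$ for every $\zeta\in E$.

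To apply Lemma \ref{lemma11}, I compute $|\omega'(t)| = 1/(h(t)\,H(t))$, and use the hypothesis $\mu(E_t)\lesssim h(t)$ together with the substitution $u=H(t)$, $du=-dt/h(t)$, to get
\[
\int_0^1 \omega'(t)^2\,\mu(E_t)\,dt \;\lesssim\; \int_0^1 \frac{dt}{h(t)\,H(t)^{2}} \;=\; \int_1^{+\infty}\!\frac{du}{u^{2}} \;=\;1.
\]
This verifies condition \eqref{converge1}.

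The main obstacle is checking the qualitative assumptions on $\omega$ in Lemma \ref{lemma11}: that $\omega(t^\gamma)$ is convex for some $\gamma>2$, and that $t|\omega'(t)|\omega(t)$ is increasing. This is where the two monotonicity hypotheses on $h$ enter the picture: since $h(t)/t$ is decreasing and $h(t)/t^\beta$ is increasing, the function $1/h$ is sandwiched between $t^{-1}$ and $t^{-\beta}$, so $H$ is a regularly varying function of controlled growth (between $\log(1/t)$ and $t^{\beta-1}$), and $\omega=\log H$ inherits a power-logarithmic regularity at $0^+$ from which the convexity of $\omega(t^\gamma)$ for large enough $\gamma$ and the monotonicity of $t|\omega'|\omega$ can be read off by direct computation. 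Should the bare choice $\omega=\log H$ fail either regularity condition in a borderline regime, one may replace $\omega$ by $H^{a}-1$ for a small $a\in(0,1/2)$: the same change of variable yields $\int_0^1\omega'(t)^2\,\mu(E_t)\,dt\lesssim a^2\int_1^{+\infty}u^{2a-2}du<\infty$, while $\omega$ now has a clean power-type behavior compatible with the hypotheses on $h$. Once $f_{\omega,E}\in\mathcal{D}(\mu)$ is secured in this way, Proposition \ref{polair} concludes $c_\mu(E)=0$.
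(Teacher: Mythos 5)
Your proposal is correct and follows essentially the same route as the paper: the paper also takes $\omega(t)=\log\int_t^{\pi}\frac{ds}{h(s)}$, verifies \eqref{converge1} by the identical substitution showing $\int_0^1\frac{dt}{h(t)\bigl(\int_t^1 ds/h(s)\bigr)^2}<\infty$, and concludes via Lemma \ref{lemma11} and Proposition \ref{polair}. Your treatment of the regularity hypotheses on $\omega$ (and the backup choice $H^a-1$) is, if anything, slightly more careful than the paper's one-line assertion that they follow from the monotonicity assumptions on $h$.
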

 \begin{proof}Let $f_{\omega,E}$ be the outer function, such that 
$$\omega(t)=\log   \int_{t}^{\pi} \dfrac{\mathrm{d}s}{ h(s)}\quad \text{a.e. on } \mathbb{T}$$
The function $\omega$ is non-increasing,   the assumptions on $h$ imply that $\omega(t^\gamma)$ is concave function for some $\gamma>2$ and $t|\omega'(t)|\omega(t)$ is increasing.  
Since $h$ satifies \eqref{diverge}, then 
$$\int_0^1\frac{dt}{h(t)\Big(\int_t^1 ds/h(s)\Big)^2} <\infty.$$
Hence \eqref{converge1} is satifies. Therefore 
$f_{\omega,E}\in \cD(\mu)$ and  $f_{\omega,E}(\zeta)=\infty$ on  $E$, then by Proposition \ref{polair} $c_\mu(E)=0$.
\end{proof}

\subsection*{Remarks} 1. Since the shift operator is a $2$-isometrie in $\cD(\mu)$,  then  $\cD(\mu)\subset L^2(\mu)$. Hence  if  $\mu(E)>0$ then $c_\mu(E)>0$.\\

2. Roughly speaking, the Theorem \ref{polair2} says that if 
$$\int_0^{2\pi} \frac{dt}{\mu(E_t)}=+\infty$$
then $c_\mu(E)=0$. When $\mu = m$ is the Lebesgue measure, the capacity  $c_\mu$ is comparable to the logarithmic capacity. The condition
$$\int_0^t \frac{dt}{|E_t|} = +\infty,$$
implies $c(E) = 0$. This result is due to Carleson \cite[Theorem 2, p.~30]{C}, see also \cite[Theorm 5.4]{EEK}  on polar sets. \\

3.  Clearly, if f is cyclic in $\mathcal{D}(\mu),$ then f is outer function and its zero set  $\mathcal{Z}_{\mathbb{T}}(f)\subset \T$, is polar, i.e.
$c_\mu\big(\mathcal{Z}_{\mathbb{T}}(f)\big) = 0,$ where
\[
\mathcal{Z}_\T(f) = \left\{ \zeta \in \T : \inf_{r \to 1} |f(r\zeta)| = 0 \right\},
\]
Theorem \ref{thm:4.3} below on cyclicity allows us to recover Theorem \ref{polair2}.

\subsection{Cyclicity in $\cD(\mu)$}
 \begin{thm}
  \label{thm:4.3}
    Let $f \in \cD(\mu) \cap A(D)$ be an outer function and $E = \{\zeta \in \mathbb{T} \mid f(\zeta) = 0\}$.
    If there exists a function $g \in \cD(\mu)$ such that 
    \[
        |g(z)| \lesssim \dist(z,E)^4, \quad z \in \DD,
    \]
    then $g \in [f]_{\cD(\mu)}$.
\end{thm}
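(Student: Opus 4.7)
The plan is to realize $g$ as a $\mathcal{D}(\mu)$-limit of polynomial multiples of $f$, via a dilation-and-factorization argument, using the strong vanishing of $g$ on $E$ to control a crucial error term.

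\textit{Dilation and factorization.} For $t \in (0,1)$, set $f_t(z) := f(tz)$ and $g_t(z) := g(tz)$. Both lie in $\mathcal{D}(\mu)$, and standard Abel-type arguments yield $f_t \to f$ and $g_t \to g$ in $\mathcal{D}(\mu)$ as $t \to 1^-$. Since $f$ is outer in $A(\mathbb{D})$, it is zero-free on $\mathbb{D}$, so $f_t$ is zero-free on an open neighborhood of $\overline{\mathbb{D}}$. Set
\[
h_t := g_t/f_t,
\]
which is therefore holomorphic on a neighborhood of $\overline{\mathbb{D}}$, hence smooth on $\overline{\mathbb{D}}$. The identity $f_t h_t = g_t$ rewrites as
\begin{equation}\label{eq:decomp}
h_t f \;=\; g_t \;-\; h_t(f_t - f).
\end{equation}

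\textit{Showing $h_t f \in [f]_{\mathcal{D}(\mu)}$.} Because $h_t$ is analytic on a neighborhood of $\overline{\mathbb{D}}$, its Taylor polynomials $p_{t,n}$ converge to $h_t$ in every $C^k(\overline{\mathbb{D}})$-norm. Since sufficiently smooth functions are multipliers of $\mathcal{D}(\mu)$, one obtains $p_{t,n} \to h_t$ in the multiplier norm. Multiplying by $f \in \mathcal{D}(\mu)$ yields $p_{t,n} f \to h_t f$ in $\mathcal{D}(\mu)$, so $h_t f \in [f]_{\mathcal{D}(\mu)}$.

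\textit{Passage to the limit.} By the closedness of $[f]_{\mathcal{D}(\mu)}$ and the convergence $g_t \to g$, it remains, in view of \eqref{eq:decomp}, to establish
\[
h_t(f_t - f) \longrightarrow 0 \quad \text{in } \mathcal{D}(\mu), \quad t \to 1^-.
\]
Here the hypothesis $|g(z)| \lesssim \dist(z,E)^4$ enters decisively. The factor $f_t - f$ goes to zero in $\mathcal{D}(\mu)$ and in $H^\infty$ at an explicit rate, whereas $h_t$ may grow near $E$ because $f$ vanishes there; the fourth-order decay of $g$ is what counteracts this growth. Leibniz-type bounds for the local Dirichlet integral (in the spirit of Lemma~\ref{Dirichlet-local}), of the form
\[
\mathcal{D}_\zeta(uv) \lesssim |u(\zeta)|^2 \mathcal{D}_\zeta(v) + |v(\zeta)|^2 \mathcal{D}_\zeta(u),
\]
then reduce the task to bounding $\|h_t\|_\infty^2\,\mathcal{D}_\mu(f_t - f)$ and $\mathcal{D}_\mu(h_t)\,\|f_t - f\|_\infty^2$ separately, splitting integration into a region near $E$ (handled by the $\dist(\cdot,E)^4$ bound on $g$) and a region bounded away from $E$ (where $|f|$ is bounded below and classical estimates apply).

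\textit{Main obstacle.} The delicate step is the $\mathcal{D}(\mu)$-convergence $h_t(f_t - f) \to 0$. The exponent $4$ in $|g(z)| \lesssim \dist(z,E)^4$ is not incidental: via Cauchy's formula, it forces $g$ to have derivatives vanishing of order $3$, $2$, $1$ on $E$, and the Richter--Sundberg formula \eqref{RSfrormula}, together with the Leibniz-type estimates, asks for precisely this depth of vanishing to simultaneously offset the singularity of $1/f_t$ near $E$ and the squared difference quotient appearing in $\mathcal{D}_\zeta$. Calibrating these estimates uniformly in $t$, and in particular showing that the error $\|h_t\|_\infty^2 \mathcal{D}_\mu(f_t - f)$ stays small as $t \to 1^-$, is the technical heart of the argument.
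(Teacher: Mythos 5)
The paper itself offers no proof of this theorem --- it simply points to \cite{EKR,EKR2,EEKAH}, where the result is obtained by a different route (built on the Richter--Sundberg theory of outer functions and a regularization of $1/f$ by bounded outer functions, rather than on dilations). Your argument must therefore stand on its own, and it has a genuine gap at precisely the point you label ``the technical heart'': the convergence $h_t(f_t-f)\to 0$ in $\mathcal{D}(\mu)$ is not proved, and the estimates you propose for it cannot be made to work.

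Concretely, you reduce to bounding $\|h_t\|_\infty^2\,\mathcal{D}_\mu(f_t-f)$ and $\mathcal{D}_\mu(h_t)\,\|f_t-f\|_\infty^2$. But $\|h_t\|_\infty=\sup_{|w|\le t}|g(w)/f(w)|$, and the hypothesis $|g(z)|\lesssim \dist(z,E)^4$ gives no control of $|g|$ in terms of $|f|$: an outer function $f\in A(\D)$ with boundary zero set $E$ may decay arbitrarily fast near $E$ (for instance $|f^*|=\exp(-\dist(\cdot,E)^{-\varepsilon})$ for suitable $E$), so $|g/f|$ may blow up at an arbitrarily prescribed rate as one approaches $E$, and hence $\|h_t\|_\infty\to\infty$ with no rate control. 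Meanwhile $\|f_t-f\|_\infty$ and $\mathcal{D}_\mu(f_t-f)$ tend to $0$ only at a rate dictated by the modulus of continuity of $f$ and by $\mu$, with no quantitative link to the blow-up of $h_t$; the products you need to control therefore diverge in general. The underlying misconception is in your ``main obstacle'' paragraph: the exponent $4$ is not there to offset the singularity of $1/f$ near $E$ --- $\dist(\cdot,E)^4$ and $|f|$ are simply not comparable --- it is calibrated against the distance function itself, so that $g$ can be divided (within $\mathcal{D}(\mu)$, via the Richter--Sundberg theorem on powers of outer functions, the same tool the paper invokes right after this theorem) by a fixed power of an outer function whose modulus is $\dist(\cdot,E)$, which is then absorbed into $[f]_{\mathcal{D}(\mu)}$ using only $f\in A(\D)$ and $E=\mathcal{Z}_\T(f)$. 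Relatedly, the claim that $|g(z)|\lesssim\dist(z,E)^4$ forces boundary derivatives of $g$ to vanish on $E$ ``via Cauchy's formula'' is not available: $E\subset\T$ and $g$ is merely in $\mathcal{D}(\mu)$, so no Cauchy formula centred on points of $E$ applies. To repair the argument one must replace the dilation $f_t$ by a regularization adapted to $|f|$ itself (e.g.\ the outer functions with boundary modulus $\min(n,1/|f^*|)$), which is essentially what the cited proofs do.
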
 
\begin{proof} See \cite {EKR,EKR2,EEKAH}
\end{proof}
We require the following lemma, which provides an estimate for \( \cD_\mu(f_{\omega,E}) \) under weaker regularity assumptions, in particular, it does not require the monotonicity of \( t\omega'(t)/\omega(t) \), provided that $E$ is a $\mathcal{K}$-set. 
\begin{lem}\label{lemma11}
Let \( E \subset \mathbb{T} \) be a \(\mathcal{K}\)-set.  Let \(\omega\) be  an increasing function such that \(\omega(t^\gamma)\) is concave for some \(\gamma > 2\) and   $\omega(t)/t^\varepsilon$ is non-increasing for some $\varepsilon \in (0,\beta_E)$. Let $\mu$ a positive Lebesgue measure such that  \(\mu(E_t) = O(h(t))\) as \(t \to 0\), where \(h\) is increasing and \(h(t)/t\) is decreasing. Then 
\begin{equation}\label{converge1} 
\cD_\mu(f_{\omega,E})\lesssim  \|\omega'h\|_\infty \|\omega\|_\infty.
\end{equation}
where the implied constant depends only  $\gamma$,  $\varepsilon$ and $\beta_E$.
\end{lem}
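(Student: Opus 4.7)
The plan is to first establish a pointwise estimate $\cD_\zeta(f_{\omega,E}) \lesssim \delta\,\omega'(\delta)^2$ (with $\delta = \dist(\zeta,E)$), and then integrate it against $\mu$ using the hypotheses on $h$ to conclude. For the pointwise estimate, I would decompose $\mathbb{T} = I(\zeta,E)\cup\Sigma(\zeta,E)\cup\Gamma(\zeta,E)$ and rerun the arguments of Lemmas~\ref{lemma1}, \ref{lemma2}, and \ref{estimationgamma}. The key adaptation is that the monotonicity of $t\omega'(t)/\omega(t)$ used there is not assumed here; instead, the bound $t\omega'(t) \leq \varepsilon\,\omega(t)$ coming from ``$\omega(t)/t^\varepsilon$ non-increasing'' plays the same role in controlling products such as $\omega'(s)\omega(s)$. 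The concavity of $\omega(t^\gamma)$ still supplies $\omega'(s) \leq (t/s)^{1-1/\gamma}\omega'(t)$ for $s \geq t$, together with the dyadic doubling $\omega'(s) \asymp \omega'(2s)$. Since $E \in \mathcal{K} \subset \mathcal{L}_2$ with $\mathcal{L}_2$-constant controlled by $\beta_E$, the $\Gamma$-contribution is handled by Lemma~\ref{lemma2}. These combine to give $\cD_\zeta(f_{\omega,E}) \lesssim \delta\,\omega'(\delta)^2$ with constant depending only on $\gamma$, $\varepsilon$, and $\beta_E$.

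To integrate the pointwise bound against $\mu$, I would use the dyadic decomposition $\mathbb{T} = \bigcup_{n\geq 0}(E_{2^{-n}}\setminus E_{2^{-n-1}})$. The dyadic doubling of $\omega'$ gives
\[
\int_\T \delta\,\omega'(\delta)^2\,d\mu(\zeta) \asymp \sum_{n\geq 0} 2^{-n}\,\omega'(2^{-n})^2\,\mu\bigl(E_{2^{-n}}\setminus E_{2^{-n-1}}\bigr).
\]
Using $\mu(E_{2^{-n}}) \leq C\,h(2^{-n})$ together with the doubling $h(2^{-n-1}) \asymp h(2^{-n})$ (which follows from $h$ increasing and $h(t)/t$ decreasing) and the bound $\omega'(2^{-n})h(2^{-n}) \leq \|\omega'h\|_\infty$, this sum is controlled by
\[
\|\omega'h\|_\infty \sum_{n\geq 0} 2^{-n}\omega'(2^{-n}) \asymp \|\omega'h\|_\infty \int_0^{2\pi}\omega'(t)\,dt \leq \|\omega'h\|_\infty\,\|\omega\|_\infty,
\]
which is exactly the announced bound.

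The main obstacle lies in the pointwise estimate. The monotonicity of $t\omega'(t)/\omega(t)$ is used in the $I_1$-step of Lemma~\ref{lemma1} to bound $\int_t^\delta \omega'(s)\omega(s)\,ds \lesssim \delta\omega'(\delta)\omega(\delta)\log(\delta/t)$, and analogous monotonicity enters the $\Sigma$ and $\Gamma$ estimates. Adapting these arguments under the present hypotheses—combining $s\omega'(s)\leq\varepsilon\,\omega(s)$ with the concavity-derived non-increase of $s^{1-1/\gamma}\omega'(s)$—is the most delicate point of the proof, and the explicit dependence of the constant on $\gamma$, $\varepsilon$, and $\beta_E$ emerges precisely there.
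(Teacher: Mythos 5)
Your overall architecture---first prove the pointwise bound $\cD_\zeta(f_{\omega,E})\lesssim \dist(\zeta,E)\,\omega'(\dist(\zeta,E))^2$, then integrate it against $\mu$---cannot work here, and avoiding it is the whole point of this lemma. Writing $\delta=\dist(\zeta,E)$, the $I_1$-step of Lemma~\ref{lemma1} reduces $\cD_{\zeta,I_1}(f_{\omega,E})$ to $\delta^{-2}\int_0^\delta t\tfrac{\omega'(t)}{\omega(t)}\int_t^\delta\omega'(s)\omega(s)\,ds\,dt$, and passing from this to $\delta\,\omega'(\delta)^2$ requires bounding $t\omega'(t)/\omega(t)$ and $s\omega'(s)\omega(s)$ from above by their values at $\delta$, i.e.\ exactly the monotonicity that is dropped here. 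The substitute you propose, $t\omega'(t)\le\varepsilon\,\omega(t)$, points the wrong way: it yields only $\cD_{\zeta,I_1}(f_{\omega,E})\lesssim\omega(\delta)^2/\delta$, which dominates $\delta\omega'(\delta)^2$ rather than being dominated by it. In fact the pointwise bound is false under the stated hypotheses: concavity of $\omega(t^\gamma)$ only forces $s^{1-1/\gamma}\omega'(s)$ to be non-increasing, so $\omega'$ may drop abruptly just below $\delta$; adding a large constant to $\omega$ keeps $\omega(t)/t^\varepsilon$ non-increasing, and one then gets $\cD_\zeta(f_{\omega,E})\gtrsim(\omega(\delta)-\omega(0))^2/\delta\gg\delta\,\omega'(\delta)^2$. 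For the same reason you cannot invoke Lemma~\ref{lemma2} for the $\Gamma$-contribution: its proof also pulls $s\omega'(s)\omega(s)$ out of the inner integral at $s=\delta$ using that same monotonicity.

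The paper's proof therefore never produces a pointwise bound of the announced shape. It keeps the un-collapsed double integrals---$\Omega(\delta)=\delta^{-2}\int_0^\delta t\tfrac{\omega'(t)}{\omega(t)}\int_t^\delta\omega'(s)\omega(s)\,ds\,dt$ for $I_1$, and an analogue $\widetilde{\Omega}(\delta)$ for $\Gamma(\zeta,E)$ obtained from the quantitative level-set estimate $|\{\zeta'\in I:\dist(\zeta',E)<t\}|\lesssim|I|\,(t/|I|)^{\beta}$ of Lemma~\ref{kset}(5) (this, and not the $\mathcal{L}_2$ constant, is where $\beta_E$ enters, because the dependence on the level $t$ must be retained)---then integrates these in $\zeta$ against $\mu$ via \eqref{changement1}--\eqref{changement2} using $\mu(E_t)=O(h(t))$, exchanges the order of integration, and only at that stage uses that $s^{1-1/\gamma}\omega'(s)$ is non-increasing, that $t\omega'(t)\le\varepsilon\omega(t)$, and that $h(t)/t$ is decreasing, to arrive at $\int_0^1\omega'(t)^2h(t)\,dt\le\|\omega'h\|_\infty\|\omega\|_\infty$. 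Your final dyadic integration step is harmless bookkeeping, but the pointwise quantity you plan to feed into it is not available; the interchange of the $\zeta$- and $(s,t)$-integrations must happen before any simplification of the inner integrals.
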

\begin{proof}
The proof closely follows that of Lemma~\ref{lemma1} and Lemma \ref{estimationgamma}, with only minor modifications. We therefore highlight only the steps where these changes occur. 
Set \(\delta=d(\zeta,E)\) and \(\delta'=d(\zeta',E)\).  For the integrals $I_i$ ($i=1,2,3$) in Lemma \ref{lemma1}, only $I_1$ involves the condition that $t w'(t)/w(t)$ is increasing. Therefore, the only change concerns $I_1$.  
Let 
$$\Omega(u)=\frac{1}{u^2} \int_{t=0}^u t \frac{\omega'(t)}{\omega(t)} \int_{s=t}^\delta \omega'(s)\omega(s)\, ds\, dt=\frac{1}{u^2} \int_{s=0}^u\omega'(s)\omega(s)\int_{t=0}^s t \frac{\omega'(t)}{\omega(t)}\, ds\, dt$$
Hence 
$-\Omega'(u)\leq \Omega(u)/u$.
From \eqref{eqI1}  and by \eqref{changement2}, we have
$$\int_\TT \mathcal{D}_{\zeta,I_1}(f_{\omega,E})\,d\mu(\zeta)
\asymp \int_\TT \Omega(\dist(\zeta,E))\dd\mu(\zeta)= -\int_0^1 \Omega'(t){\mu(E_t)}\dd t +\Omega(2\pi)\mu(\T).$$
Hence 
\begin{align*}
\int_\TT \mathcal{D}_{\zeta,I_1}(f_{\omega,E})\,d\mu(\zeta)
&\leq \int_0^1 \frac{1}{u^2} \int_0^u t \frac{\omega'(t)}{\omega(t)} \int_t^u \omega'(s)\omega(s)\, ds\, dt \, \frac{h(u)}{u} du \\
&= \int_0^1 t \frac{\omega'(t)}{\omega(t)} \int_t^1 s^{1-1/\gamma} \omega'(s)\omega(s) \frac{ds}{s^{1-1/\gamma}} \int_s^1 \frac{h(u)}{u^3} du \\
&\leq \int_0^1 t^{2-1/\gamma} \frac{\omega'(t)^2}{\omega(t)} \int_t^1 \omega(s)  h(s)\, \frac{ds}{s^{3-1/\gamma}} \\
&\leq\int_0^1 \omega'(t)^2 h(t)\, dt.
\end{align*}

Now, concerning $\Gamma$,  let $I_n =\{\zeta'\in \T \text{ : }   |\zeta-\zeta'|< 2^{n} \delta\}$.  Let $\beta =(\varepsilon+ \beta_E)/2$, since $E$ is a  \(\mathcal{K}\)-set, by (5) Lemma \ref{kset},  $| \{ \zeta' \in I_n \text{ : } \dist(\zeta', E)<t \}| \leq |I_n|(t/|I_n|)^{\beta}$. We get 
\begin{align*}
\int_{|\zeta-\zeta'|>\delta, \, \delta'<t} \frac{|\mathrm{d}\zeta'|}{|\zeta-\zeta'|^2} 
&= \sum_{n\ge 1} \int_{I_{n+1}\setminus I_n, \; \delta'<t} \frac{|\mathrm{d}\zeta'|}{|\zeta-\zeta'|^2} \\
&\le \sum_{n\ge 1} \frac{1}{(2^n \delta)^2} \int_{ I_n, \, \delta'<t} |\mathrm{d}\zeta'| \\
& \le \sum_{n\ge 1} \frac{1}{(2^n \delta)^2} | \{ \zeta' \in I_n \text{ : } \dist(\zeta', E)<t \} | \\
&\lesssim \sum_{n\ge 1} \frac{1}{(2^n \delta)^2} 2^{n+1} \delta \left( \frac{t}{2^{n+1}\delta} \right)^{\beta}\asymp 
 \frac{t^{\beta}}{\delta^{1+\beta}}.
\end{align*}
Hence, by \eqref{eqgamma}, we obtain
\begin{align*}
 \mathcal{D}_{\zeta,\Gamma(\zeta,E)}(f_{\omega,E})\,
&= 4  \int_{|\zeta-\zeta'|>\delta, \, \delta'<\delta} \left( \int_{\delta'}^\delta \frac{\omega'(t)}{\omega(t)} \int_t^\delta \omega'(s)\omega(s)\, ds\, dt \right) \frac{|\mathrm{d}\zeta'|}{|\zeta-\zeta'|^2}  \\
&\asymp  \int_0^\delta \frac{\omega'(t)}{\omega(t)} \int_t^\delta \omega'(s)\omega(s)\, ds \int_{|\zeta-\zeta'|>\delta, \, \delta'<t} \frac{|\mathrm{d}\zeta'|}{|\zeta-\zeta'|^2}\notag \\
&\le \frac{1}{\delta^{1+\beta}}  \int_0^\delta t^{\beta_E} \frac{\omega'(t)}{\omega(t)} \int_t^\delta \omega'(s)\omega(s)\, ds dt = \widetilde{\Omega}(\delta)
\end{align*}
Note that ${\widetilde{\Omega}}^{'}(u)\leq \widetilde{\Omega}(u)/u$, then  by \eqref{changement2} we get 
\begin{align*} 
\int_\TT  \mathcal{D}_{\zeta,\Gamma(\zeta,E)}(f_{\omega,E})d\mu(\zeta)&\lesssim \int_{u=0}^1 \int_{t=0}^u t^{\beta} \frac{\omega'(t)}{\omega(t)} \int_{s=t}^\delta \omega'(s)\omega(s)\, \frac{h(u)}{u^{2+\beta}} ds dt du \notag \\
&\lesssim \int_{t=0}^1t^{\beta}\frac{\omega'(t)}{\omega(t)}  \int_{s=t}^1 \omega'(s)\omega(s)\, \frac{h(s)}{s^{1+\beta}} \, ds dt \notag \\
&\lesssim \|\omega'h\|_\infty  \int_{t=0}^1t^{\beta}\frac{\omega'(t)}{\omega(t)}  \int_{s=t}^1
 \frac{\omega(s)}{s^\varepsilon}\frac{ds}{s^{1+\beta-\varepsilon}}  dt \notag \\
& \asymp  \|\omega'h\|_\infty \|\omega\|_\infty.
\end{align*}
The proof is now complete.
\end{proof}

\subsection*{Proof of Theorem \ref{thm:4.3}} 
Write $\sigma=1-1/\gamma_0$, let $\gamma\in (0,\gamma_0)$, $\alpha\in (0,1/\gamma_0)$,  $\sigma\in (0,1/2)$ and $\eta \in (0,1)$. Consider the outer function \( f_{\omega_\eta,E} :[0,\pi]\to. \RR_+\) where
 \( \omega_\eta \)  is  given by
\[
\omega_\eta(t) =
\begin{cases} 
\displaystyle c_\eta t^\alpha & \text{if } 0 \leq t \leq \eta, \\
&\\
\displaystyle  \left(\int_{t}^{2\pi} \dfrac{\mathrm{d}s}{ h(s)}\right)^{-\sigma}& \text{if } \eta < t \leq \pi
\end{cases}
\]
here $c_\eta$  is the constant chosen to make $\omega_\eta$ continuous.  Since $\int_0^{2\pi} ds/h(s)=\infty$ and \(h(t)/t^{1-1/\gamma_0}\) is increasing, we can choose $\eta_0\in (0,\pi)$ such that for $\eta\in (0,\eta_0)$ 
\begin{enumerate}
\item $\omega_\eta$ is increasing,
\item $\omega_\eta(t)/t^\varepsilon$ is non-increasing for any $\varepsilon>\alpha$, 
 \item \(\omega_\eta (t^\gamma)\) is concave,  
 \item 
$\displaystyle \sup_{\eta <\eta_0}  \| \omega_\eta 'h \|_\infty\|\omega_\eta\|_\infty<\infty.$
\end{enumerate}
By Lemma~\eqref{lemma11}, we have \( f_{\omega_\eta,E} \in \mathcal{D}(\mu) \).
Note that \( |f_{\omega_\eta,E}^*| \to |g^*| \) a.e. on \( \mathbb{T} \) as \( \eta \to 0 \), where \(g\) is the outer function defined by
\[
g^*(\zeta)
   = \left( \int_{\dist(\zeta,E)}^{2\pi} \frac{ds}{h(s)} \right)^{-\sigma},
   \qquad \text{a.e. on } \mathbb{T}.
\]
It is clear that \( \log |g^*| \in L^1(\mathbb{T}) \).
By part (2) of Theorem~\ref{appartdmu}, we obtain \( g \in \mathcal{D}(\mu) \), and by Lemma~\ref{lemma11}, we also have \( 1/g \in \mathcal{D}(\mu) \).
Hence \( g \) is cyclic~\cite{EEL}.

By Theorem~\ref{thm:4.3}, we have \( f_{\omega_\eta,E}^{4/\alpha} \in [f]_{\mathcal{D}(\mu)} \).
Therefore, by the Richter--Sundberg Theorem \cite{RSJOT}, it follows that
\( f_{\omega_\eta,E} \in [f]_{\mathcal{D}(\mu)} \).
Consequently, \( g \in [f]_{\mathcal{D}(\mu)} \).
Since \( g \) is cyclic in \( \mathcal{D}(\mu) \), we conclude that
\[
[f]_{\mathcal{D}(\mu)} = \mathcal{D}(\mu),
\]
which completes the proof.


\subsection*{Acknowledgements}
The first named author was partially supported by the Arab Fund Foundation Fellowship Program. The Distinguished Scholar Award - File 1092. 
The second author was partially supported by ANR Project ANR-24-CE40-5470.


\bibliography{samplebib}

\begin{thebibliography}{99}
\bibitem{AHMR} Aleman, A.; Hartz, M. McCarthy, J.; Richter, S.,  Factorizations induced by complete Nevanlinna-Pick factors. Adv. Math. 335 (2018), 372–404.
\bibitem{AHMR1} Aleman, A.; Hartz, M. McCarthy, J.; Richter, S.,  
Free outer functions in complete Pick spaces. 
Trans. Amer. Math. Soc. 376 (2023), no. 3, 1929–1978.
\bibitem{AHRM2}  Aleman, A.; Hartz, M. McCarthy, J.; Richter, S.,  Weak products of complete Pick spaces. Indiana Univ. Math. J., 70 (2021), 325–352.
\bibitem{ARSW1}
 Arcozzi, N.; Rochberg, R.; Sawyer, E.; Wick, B.,  The Dirichlet space and related function spaces. Mathematical Surveys and Monographs, 239. American Mathematical Society, Providence, RI, 2019. 
 \bibitem{ARSW2}
Arcozzi, N.; Rochberg, R.; Sawyer, E.; Wick, B.,   The Dirichlet space: a survey. New York J. Math. 17A (2011), 45–86.
\bibitem{ARS1}
Arcozzi, N.; Rochberg, R.; Sawyer, E., Carleson measures for the Drury-Arveson Hardy space and other Besov-Sobolev spaces on complex balls. Adv. Math. 218 (2008), no. 4, 1107–1180.
\bibitem{ARS2}
Arcozzi, N.; Rochberg, R.; Sawyer, E., Carleson measures for analytic Besov spaces. {Rev. Mat. Iberoamericana} 18 (2002), no. 2, 443–510.
\bibitem{Be}  Benyamine, B., Sections finies d’inégalités multiplicatives de Hilbert et multiplicateurs de l’espace de Dirichlet.  Th\`ese de doctorat, Université de Bordeaux, 7 juin 2022. 
\bibitem{B} Beurling, A.,  On two problems concerning linear transformations in Hilbert space. Acta Math. 81 (1948), 239–255. 
\bibitem{B1}
Bruna, J., Muckenhoupt's weights in some boundary problems of a complex variable. In Harmonic analysis (Minneapolis, Minn., 1981), pp. 74–85.

\bibitem{B2}
 Bruna, J.,  On the peak sets for holomorphic Lipschitz functions. Indiana Univ. Math. J. 32 (1983), no. 2, 257–272. 

\bibitem{BS}
Brown, L.; Shields, A. L.,  Cyclic vectors in the Dirichlet space. {Trans. Amer. Math. Soc.} 285 (1984), no. 1, 269–303.
\bibitem{C}  Carleson, C, Selected Problems on Exceptional Sets, Van Nostrand Mathematical Studies, vol. 13,
Van Nostrand, Princeton, NJ, 1967.
[5] R. Chacón, Carleson-type measure 
\bibitem{carleson1960representation}
Carleson, L. A representation formula for the Dirichlet integral.  {Math. Z.} 73 (1960), 190–196.

\bibitem{carleson1952}
Carleson, L., Sets of uniqueness for functions regular in the unit circle.  {Acta Math.} 87 (1952), 325–345.
\bibitem{CSW}  Costea, S;   Sawyer, E;  and B.D. Wick, B.,  The corona theorem for the Drury-Arveson
Hardy space and other holomorphic Besov-Sobolev spaces on the unit ball in $\C^n$. Anal. PDE 4 (2011), 499-550.
\bibitem{DKSTW} Douglas, R. G.;  Krantz, S. G.; Sawyer, E.; Treil, S.;  Wick, B.;  A history of the
corona problem. The corona problem, 1-29, Fields Inst. Commun.
72, Springer, New York, 2014.
\bibitem{D}
Dyn'kin, E. M.; The pseudo-analytic extension.  {J. Anal. Math.} 60 (1993), 45–70.
\bibitem{EEL}El-Fallah, O.; Elmadani, Y.; Labghail, I. Extremal functions and invariant subspaces in Dirichlet spaces. Adv. Math. 408 (2022), part B, Paper No. 108604, 29 pp.
\bibitem{EEK1} El-Fallah, O.; Elmadani, Y.; Kellay, K. Kernel and capacity estimates in Dirichlet spaces. J. Funct. Anal. 276 (2019), no. 3, 867–895.
 \bibitem{EEK}El-Fallah, O.; Elmadani, Y.; Kellay, K. Cyclicity and invariant subspaces in Dirichlet spaces. J. Funct. Anal. 270 (2016), no. 9, 3262–3279.
\bibitem{EEKAH} El-Fallah, O.; Kellay, K. Approximation in weighted Dirichlet spaces. Submitted


\bibitem{el2014primer}
El-Fallah, O.; Kellay, K.; Mashreghi, J.;  Ransford, T.,  {A primer on the Dirichlet space}. Cambridge Tracts in Mathematics, 203. Cambridge University Press, Cambridge, 2014.

\bibitem{Onebox}
El-Fallah, O.; Kellay, K.; Mashreghi, J.;  Ransford, T.,    One-box conditions for Carleson measures for the Dirichlet space. Proc. Amer. Math. Soc. 143 (2015), no. 2, 679–684.
\bibitem{EKR}
El-Fallah, O.; Kellay, K.;   Ransford, T.,  Cantor sets and cyclicity in weighted Dirichlet spaces.  {J. Math. Anal. Appl.} 372 (2010), no. 2, 565–573.
\bibitem{EKR2}
El-Fallah, O.; Kellay, K.;   Ransford, T.,  On the Brown-Shields conjecture for cyclicity in the Dirichlet
space. Adv. Math. 222 (2009), no. 6, 2196-2214.
\bibitem{EKR3}
El-Fallah, O.; Kellay, K.;   Ransford, T.,  Cyclicity in the Dirichlet space
Arkiv Mat.  44 (1), 61-86.
\bibitem{EL}  Elmadani, Y.; Labghail, I. Cyclicity in Dirichlet spaces. Canad. Math. Bull. 62 (2019), no. 2, 247–257.
 \bibitem{H} Hartz, M., Every complete Pick space satisfies the column-row property. Acta Math. 231 (2023), no. 2, 345–386.
\bibitem{HS}  Hedenmalm, H; Shields, A., Invariant subspaces in Banach spaces of analytic functions. Michigan Math. J. 37 (1990), no. 1, 91–104. 
\bibitem{KS} Kahane, J.-P;  Salem, R.,  {Ensembles parfaits et séries trigonométriques}. Second edition. Hermann, Paris, 1994.

\bibitem{Kerman}
Kerman, R. and Sawyer, E. Carleson measures and multipliers of Dirichlet-type spaces.  {Trans. Amer. Math. Soc.} 309 (1988), no. 1, 87–98.
\bibitem{RW} Richter, S.; Wick, B.;  
A remark on the multipliers on spaces of weak products of functions. Concr. Oper. 3 (2016), no. 1, 25–28.
\bibitem{RS}
Richter, S. and Sundberg, C. A formula for the local Dirichlet integral.  {Michigan Math. J.} 38 (1991), no. 3, 355–379.

\bibitem{RStrans} Richter, S.; Sundberg, C., Invariant subspaces of the Dirichlet shift and pseudocontinuations. Trans. Amer. Math. Soc. 341 (1994), no. 2, 863-879. 

\bibitem{RSJOT}  Richter, S; Sundberg, C., Multipliers and invariant subspaces in the Dirichlet space. J. Operator Theory 28 (1992), no. 1, 167-186. 


\bibitem{Richertarns} Richter, S.,  A representation theorem for cyclic analytic two-isometries. Trans. Amer. Math. Soc. 328 (1991), no. 1, 325–349. 

\bibitem{RichterCreel} Richter, S.,  Invariant subspaces of the Dirichlet shift. J. Reine Angew. Math. 386 (1988), 205–220. 
\bibitem{stegenga1980multipliers}
Stegenga, D.,  Multipliers of the Dirichlet space.  {Illinois J. Math.} 24 (1980), no. 1, 113–139.
\bibitem{S} Seip, K., Interpolation and sampling in spaces of analytic functions. University Lecture Series, 33. American Mathematical Society, Providence, RI, 2004. 
\bibitem{TW}
Taylor, B. A. and Williams, D. L,  Ideals in rings of analytic functions with smooth boundary values.  {Canadian J. Math.} 22 (1970), 1266–1283.
\bibitem{T} Trent, T. T., A corona theorem for multipliers on Dirichlet space. Integral Equations
Operator Theory, 49 (2004), 123–139.
\bibitem{W}
Wu, Z. Carleson measures and multipliers for Dirichlet spaces.  {J. Funct. Anal.} 169 (1999), no. 1, 148–163.\\



\end{thebibliography}
\end{document}